\newcommand{\be}[1]{\textbf{\textit{#1}}}
\newcommand{\mbbZ}{\mathbb{Z}}
\newcommand{\mbbC}{\mathbb{C}}
\newcommand{\mbbP}{\mathbb{P}}
\renewcommand{\dim}{\operatorname{dim}}
\newcommand{\Ker}{\operatorname{Ker}}
\renewcommand{\Im}{\operatorname{Im}}
\newcommand{\Hom}{\operatorname{Hom}}
\newcommand{\Diff}{\operatorname{Diff}}
\newcommand{\End}{\operatorname{End}}
\newcommand{\Id}{\operatorname{Id}}
\newcommand{\diag}{\operatorname{diag}}
\newcommand{\Sp}{\operatorname{Sp}}
\newcommand{\iGr}{\operatorname{iGr}}
\newcommand{\Stab}{\operatorname{Stab}}
\newcommand{\Sym}{\operatorname{Sym}}
\newcommand{\mcaO}{\mathcal{O}}
\newcommand{\mcaF}{\mathcal{F}}
\newcommand{\mfrb}{\mathfrak{b}}
\newcommand{\mfrg}{\mathfrak{g}}
\newcommand{\mfrh}{\mathfrak{h}}
\newcommand{\mfrl}{\mathfrak{l}}
\newcommand{\mfrp}{\mathfrak{p}}
\newcommand{\mfrq}{\mathfrak{q}}
\newcommand{\mfrr}{\mathfrak{r}}
\newcommand{\mfru}{\mathfrak{u}}
\newcommand{\mfrgl}{\mathfrak{gl}}
\newcommand{\mfrsp}{\mathfrak{sp}}
\newcommand{\pbar}{\, |\,}
\newtheorem{theorem}{Theorem}
\newtheorem*{theorem*}{Theorem}
\newtheorem{proposition}[theorem]{Proposition}
\newtheorem*{proposition*}{Proposition}
\newtheorem{lemma}[theorem]{Lemma}
\newtheorem*{lemma*}{Lemma}
\newtheorem*{corollary*}{Corollary}
\theoremstyle{remark}\newtheorem{definition}[theorem]{Definition}
\theoremstyle{remark}\newtheorem*{definition*}{Definition}
\theoremstyle{remark}\newtheorem{example}[theorem]{Example}
\theoremstyle{remark}\newtheorem*{example*}{Example}
\theoremstyle{remark}\newtheorem{remark}[theorem]{Remark}
\theoremstyle{remark}\newtheorem*{remark*}{Remark}
\theoremstyle{remark}\newtheorem{conjecture}[theorem]{Conjecture}
\theoremstyle{remark}\newtheorem*{conjecture*}{Conjecture}
\newlength{\defaultboxsize}
\newenvironment{dynkin}
{\begin{tikzpicture}[baseline={(0,{-0.7*height("$\alpha_{1_1}$")*1pt})}, decoration={markings,mark=at position 0.6 with {\arrow[ultra thick]{<}}}]}
{\end{tikzpicture}}
\newlength{\dynkinstep}
\newlength{\dynkindotradius}
\newlength{\dynkincrosssize}
\newcommand{\dynkinline}[4]{
\draw(\dynkinstep*#1,\dynkinstep*#2) -- (\dynkinstep*#3,\dynkinstep*#4);}
\newcommand{\dynkindoubleline}[4]{
\draw[double,double distance=.7*\dynkindotradius,postaction={decorate}] (\dynkinstep*#1,\dynkinstep*#2) -- (\dynkinstep*#3,\dynkinstep*#4);}
\newcommand{\dynkindot}[2]{
\draw (\dynkinstep*#1,\dynkinstep*#2) [fill=white] circle  (\dynkindotradius);}
\newcommand{\dynkincross}[2]{
\draw[very thick,white] (#1*\dynkinstep-\dynkincrosssize,#2*\dynkinstep-\dynkincrosssize) -- (#1*\dynkinstep+\dynkincrosssize,#2*\dynkinstep+\dynkincrosssize);
\draw[very thick,white] (#1*\dynkinstep-\dynkincrosssize,#2*\dynkinstep+\dynkincrosssize) -- (#1*\dynkinstep+\dynkincrosssize,#2*\dynkinstep-\dynkincrosssize);
\draw (#1*\dynkinstep-\dynkincrosssize,#2*\dynkinstep-\dynkincrosssize) -- (#1*\dynkinstep+\dynkincrosssize,#2*\dynkinstep+\dynkincrosssize);
\draw (#1*\dynkinstep-\dynkincrosssize,#2*\dynkinstep+\dynkincrosssize) -- (#1*\dynkinstep+\dynkincrosssize,#2*\dynkinstep-\dynkincrosssize);}
\newcommand{\dynkindots}[4]{
\draw[dotted] (\dynkinstep*#1,\dynkinstep*#2) -- (\dynkinstep*#3,\dynkinstep*#4);}
\newcommand{\dynkinlabel}[4]{
\node [#3] at (\dynkinstep*#1,\dynkinstep*#2) {{\scriptsize #4}};}
\newcommand\halfbox[1]{ \tikz[baseline=(n.base)]{
\node(n)[inner sep=1pt]{$#1$};
\draw[line cap=round](n.north west)--(n.south west)--(n.south east);}\smallskip}
\newcommand\spectralsequence[2]{\setlength{\abovedisplayskip}{1em}
\setlength{\belowdisplayskip}{1em}
\ensuremath{#1} \halfbox{\xymatrix@R=.3em@C=.9em{#2}}}
\newcommand\LS[1]{\overline{#1}}
\renewcommand*\env@matrix[1][*\c@MaxMatrixCols c]{%
  \hskip -\arraycolsep
  \let\@ifnextchar\new@ifnextchar
  \array{#1}}
\setlist[enumerate,1]{label=(\alph*), ref=(\alph*)}
\setlist[enumerate,2]{label=\roman*., ref=\roman*.}
\setlist[enumerate,3]{label=\Alph*., ref=\Alph*.}
\setlist[enumerate,4]{label=\arabic*., ref=\arabic*.}
\begin{document}

\selectlanguage{english}

\title{Singular BGG complexes for the symplectic case}

\author{Rafael Mrđen}

\address{Faculty of Civil Engineering, University of Zagreb: Fra Andrije Kačića-Miošića 26, 10 000 Zagreb, Croatia.}

\email{rafaelm@grad.hr}

\thanks{The author acknowledges support from the Croatian Science Foundation grant no. 4176, and the QuantiXLie Center of Excellence grant no. KK.01.1.1.01.0004 funded by the European Regional Development Fund.}

\subjclass[2010]{Primary: 58J10; Secondary: 53A55, 53A45, 58J70.}

\keywords{Bernstein-Gelfand-Gelfand (BGG) complexes; Singular infinitesimal character; Invariant differential operators; Lagrangian Grassmannian; Penrose transform}


\begin{abstract}
Using the Penrose transform, we construct analogues of the BGG (Bernstein-Gelfand-Gelfand) resolutions in certain singular infinitesimal characters, in the holomorphic geometric setting, over the Lagrangian Grassmannian. We prove the exactness of the constructed complex over the big affine cell.
\end{abstract}

\maketitle


\section{Introduction and preliminaries}

The BGG complexes were introduced in \cite{bernstein1975differential} by Bernstein, Gelfand and Gelfand. For a semisimple Lie algebra $\mfrg$ (complex, finite-dimensional), they constructed for each finite-dimensional irreducible $\mfrg$-module $F$ a resolution consisting of direct sums of Verma modules. This construction was generalized by Lepowsky in \cite{lepowsky1977generalization}, from the Borel case to the case of any parabolic subalgebra $\mfrp$. The highest weights of generalized Verma modules appearing in the resolution correspond to $\mfrp$-dominant elements in the affine Weyl group orbit of the highest weight of $F$. These elements can be parametrized by a certain subset of the Weyl group, which can be organized into a directed graph called the Hasse diagram. The Hasse diagram is independent of $F$, so for fixed $(\mfrg,\mfrp)$ all BGG resolutions in regular infinitesimal character have the same shape.

It is well known that homomorphisms of generalized Verma modules correspond to invariant differential operators acting between sheaves of sections of homogeneous vector bundles over the generalized flag manifold $G/P$. On the geometric side, BGG complexes were studied by Čap, Slovák and Souček in \cite{cap2001bernstein}. They constructed BGG complexes in the more general theory of parabolic geometries, for which our $G/P$ is a special case -- the flat model. In the flat model, their construction yields a locally exact resolution of the constant sheaf over $G/P$ defined by $F$, by direct sums of homogeneous vector bundles and invariant differential operators. In case when the parabolic $\mfrp$ is $|1|$-graded, which is equivalent to $G/P$ having structure of a Hermitian symmetric space, the BGG resolution in trivial infinitesimal character coincides with the holomorphic de Rham complex.

Many important operators live in singular infinitesimal character (e.g. the scalar wave operator on the Minkowski space, Dirac-Weyl operators on conformal manifolds, Dirac-Feuter operators on quaternionic manifolds, etc.), and there are no general constructions of resolutions as above in these cases. Several problems emerge here, one of which is a lack of the so called standard operators. So, in order to make a resolution out of the singular orbit, one must construct many non-standard operators. It turned out that the Penrose transform is a particularly useful tool for the construction of such operators. In \cite{pandzic2016bgg}, Pandžić and Souček constructed singular BGG resolutions over the big affine cell in type A, for all maximal parabolics, i.e., all complex Grassmannians. It is visible there that singular BGG resolutions cover the whole singular orbit, and moreover, they have the same shape as certain regular resolutions in lower rank.

Similar results are obtained in this paper, for type C. Here $G$ is the symplectic group $\Sp(2n,\mbbC)$. There is just one $|1|$-graded parabolic $\mfrp$, and $G/P$ is the Lagrangian Grassmannian. We have two types of singularities: singularity of the first kind, involving only short simple roots, and of the second kind, involving also the long simple root. In the construction, we assume that the infinitesimal character is semi-regular, i.e., orthogonal to only one simple root. In the first kind, the constructed BGG complex covers the whole singular orbit. But in the second kind, the orbit decomposes into two complexes, in agreement with Enright-Shelton's theory \cite{enright1987categories}.

For some results in a higher grading, see e.g. \cite{krump2006singular},  \cite{salac2017k-dirac}, \cite{salac2017resolution}.

This paper presents the material from author's PhD thesis \cite{mrden2017singular}. I am grateful to my advisors Pavle Pandžić and  Vladim\'{i}r Souček for their guidance and ideas. Thanks to Tom\'{a}š Salač for helpful discussions.

\subsection{Parabolic subalgebras}

Let $G$ be a semisimple complex Lie group, connected and simply connected, $\mfrg$ its Lie algebra, $\mfrh$ its fixed Cartan subalgebra, and $\Delta^+(\mfrg,\mfrh)$ fixed set of positive roots. The half sum of all the positive roots will be denoted by $\rho$. For an element $w \in W_\mfrg$ in the Weyl group, denote by $l(w)$ the minimal number of simple reflections required to obtain $w$. Denote also
\[ \Phi_{w}:=\left\{ \alpha \in \Delta^+(\mfrg,\mfrh) \ \colon \ w^{-1} \alpha <0 \right\}. \]
A subset $S \subseteq \Delta^+(\mfrg,\mfrh)$ is said to be \be{saturated} if for any $\alpha, \beta \in S$ such that $\alpha + \beta$ is a root, we have $\alpha + \beta \in S$. A subset $S \subseteq \Delta^+(\mfrg,\mfrh)$ is said to be \be{admissible} if both $S$ and $\Delta^+(\mfrg,\mfrh) \setminus S$ are saturated. For $w, w' \in W_\mfrg$ we write $w \stackrel{\alpha}{\longrightarrow} w'$ if $l(w')=l(w)+1$ and $w'=\sigma_\alpha \circ w$, for some $\alpha \in \Delta^+(\mfrg,\mfrh)$, not necessarily simple. We often write only $w \longrightarrow w'$. This way, $W_\mfrg$ becomes a directed graph. Besides the standard action of $W_\mfrg$ on $\mfrh^\ast$, we also use the affine action: $w \cdot \lambda = w(\lambda+\rho)-\rho$. 

Fix a standard parabolic subalgebra $\mfrp = \mfrl \oplus \mfru$ of $\mfrg$. It will be denoted by crossing the nodes in the Dynkin diagram for $\mfrg$ that are not in the Levi factor $\mfrl$. Denote by $\Delta(\mfru)$ the set of positive roots whose root subspaces lie in the nilpotent radial $\mfru$. We write $\mfru^-$ for the opposite nilpotent radical. The \be{(regular) Hasse diagram} of $\mfrp$ is the full subgraph of $W_\mfrg$ with the following nodes:
\[ W^{\mfrp} :=  \left\{ w \in W_\mfrg \ \colon \ \Phi_{w} \subseteq \Delta(\mfru) \right\}. \]
It consists of all elements in $W_\mfrg$ that map $\mfrg$-dominant weights to $\mfrp$-dominant ones. We will mostly be interested in parabolics with abelian nilpotent radical. These are said to be \be{$|1|$-graded} (and also of \be{Hermitian type}). They are necessarily maximal. For classification, see e.g. \cite[2.1.]{enright2014diagrams}. For finding the graph structure of $W^\mfrp$, we will use \cite[3.2.]{cap2009parabolic}:
\begin{proposition}
Suppose $\mfrp \subseteq \mfrg$ is a $|1|$-graded parabolic subalgebra. The map $w \mapsto \Phi_w$ is a bijection from $W^\mfrp$ to the set of all admissible subsets of $\Delta(\mfru)$. A subset $S \subseteq \Delta(\mfru)$ is admissible if and only if the following condition holds:
\begin{equation}
\label{equation:admissible_1_graded}
\text{If } \alpha \in \Delta(\mfru) \text{ and } \beta \in \Delta^+(\mfrl,\mfrh) \text{ such that } \alpha + \beta \in S, \text{ then } \alpha \in S.
\end{equation}

Moreover, $w \stackrel{\alpha}{\longrightarrow} w'$ in $W^\mfrp$ if and only if $|\Phi_{w'}|=|\Phi_w|+1$ and $\Phi_{w'}=\Phi_w\cup \{ \alpha \}$.
\label{proposition:hasse_bijection_saturated}
\end{proposition}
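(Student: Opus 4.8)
The plan is to exploit the fact that for a $|1|$-graded parabolic the nilradical $\mfru$ is abelian, so $\Delta(\mfru)$ contains no sums of its own elements, and every root of $\Delta^+(\mfrg,\mfrh)$ is either in $\Delta^+(\mfrl,\mfrh)$ or in $\Delta(\mfru)$ (there is no piece of degree $\geq 2$). First I would recall the standard fact that $w \mapsto \Phi_w$ is injective on all of $W_\mfrg$, with $l(w) = |\Phi_w|$, and that a subset $S \subseteq \Delta^+(\mfrg,\mfrh)$ is of the form $\Phi_w$ for some $w$ if and only if $S$ is admissible in the sense defined above (both $S$ and its complement saturated); this is the Kostant-style characterization of the sets $\Phi_w$. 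So it remains to identify, among all admissible subsets of $\Delta^+(\mfrg,\mfrh)$, those contained in $\Delta(\mfru)$, and to show these are exactly the subsets $S \subseteq \Delta(\mfru)$ satisfying condition \eqref{equation:admissible_1_graded}.

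For this identification I would argue as follows. Suppose $w \in W^\mfrp$, so $\Phi_w \subseteq \Delta(\mfru)$. I need: $\Phi_w$ saturated, $\Delta^+(\mfrg,\mfrh)\setminus\Phi_w$ saturated, and condition \eqref{equation:admissible_1_graded}. Saturation of $\Phi_w$ inside $\Delta(\mfru)$ is automatic because $\mfru$ abelian means no two elements of $\Delta(\mfru)$ sum to a root at all. Saturation of the complement: take $\alpha,\beta$ in the complement with $\alpha+\beta$ a root; if $\alpha+\beta \in \Delta^+(\mfrl,\mfrh)$ there is nothing to check since $\Delta^+(\mfrl,\mfrh)$ is disjoint from $\Phi_w \subseteq \Delta(\mfru)$, and if $\alpha+\beta \in \Delta(\mfru)$ then (since $\mfru$ is abelian and $\Delta^+(\mfrg)=\Delta^+(\mfrl)\sqcup\Delta(\mfru)$) exactly one of $\alpha,\beta$ lies in $\Delta(\mfru)$ and the other in $\Delta^+(\mfrl,\mfrh)$; writing $\alpha \in \Delta(\mfru)$, $\beta \in \Delta^+(\mfrl)$, the contrapositive of \eqref{equation:admissible_1_graded} applied to $\alpha,\beta$ together with $\alpha \notin \Phi_w$ gives $\alpha+\beta \notin \Phi_w$. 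And condition \eqref{equation:admissible_1_graded} itself is just a restatement of one half of "complement saturated" once one knows $\Phi_w \subseteq \Delta(\mfru)$: if $\alpha \in \Delta(\mfru)$, $\beta \in \Delta^+(\mfrl)$ and $\alpha+\beta \in \Phi_w$ but $\alpha \notin \Phi_w$, then $\alpha$ and $\alpha+\beta-\alpha=\beta$... one must be slightly careful, so I would instead derive \eqref{equation:admissible_1_graded} directly from $l(\sigma_\alpha w) $ computations or from the cocycle identity $\Phi_{w} $ behaviour. Conversely, given $S \subseteq \Delta(\mfru)$ satisfying \eqref{equation:admissible_1_graded}, I run the same dichotomy backwards to see $S$ and $\Delta^+(\mfrg)\setminus S$ are both saturated, hence $S = \Phi_w$ for a unique $w$, which then lies in $W^\mfrp$ since $S \subseteq \Delta(\mfru)$.

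For the final "moreover" clause, I would use the general description of covers in $W_\mfrg$: $w \xrightarrow{\alpha} w'$ means $w' = \sigma_\alpha w$ and $l(w')=l(w)+1$, which in terms of the $\Phi$'s says $\Phi_{w'} \supseteq \Phi_w$ with $|\Phi_{w'}|=|\Phi_w|+1$; a short computation with reflections shows the unique new root is exactly the $\alpha$ in question, giving $\Phi_{w'}=\Phi_w \cup\{\alpha\}$. Restricting the edge relation from $W_\mfrg$ to the full subgraph $W^\mfrp$ changes nothing about which edges are present, so the same characterization holds in $W^\mfrp$; one only needs to note that if $w, w' \in W^\mfrp$ and $\Phi_{w'}=\Phi_w\cup\{\alpha\}$ with $|\Phi_{w'}|=|\Phi_w|+1$, then automatically $w'=\sigma_\alpha w$ and $l(w')=l(w)+1$, i.e. the covering relation is genuinely recovered from the combinatorics of the $\Phi$-sets. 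I expect the main obstacle to be the bookkeeping in the converse direction — checking that condition \eqref{equation:admissible_1_graded} really forces both saturation conditions — and in pinning down that the newly added root in a cover is the reflection root $\alpha$ rather than some other positive root; both are standard but require care with the interaction between $\Delta^+(\mfrl)$ and $\Delta(\mfru)$. Since this is precisely \cite[3.2.]{cap2009parabolic}, I would in practice cite it and only sketch the reduction above.
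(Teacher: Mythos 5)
The paper offers no proof of this proposition at all: it is quoted verbatim from \cite[3.2.]{cap2009parabolic} and used as a black box, so there is nothing to compare your argument against except the reference itself. That said, the first half of your sketch is correct and complete modulo the classical fact you invoke (a subset of $\Delta^+(\mfrg,\mfrh)$ is of the form $\Phi_w$ iff it and its complement are saturated, and $w\mapsto\Phi_w$ is injective with $l(w)=|\Phi_w|$). Your hesitation in the converse direction is unnecessary: if $\alpha\in\Delta(\mfru)$, $\beta\in\Delta^+(\mfrl,\mfrh)$, $\alpha+\beta\in S$ and $\alpha\notin S$, then $\beta\notin S$ automatically because $S\subseteq\Delta(\mfru)$ is disjoint from $\Delta^+(\mfrl,\mfrh)$, so $\alpha,\beta$ both lie in the complement while their sum does not, contradicting saturation of the complement. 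Together with your dichotomy argument this gives the equivalence of admissibility with \eqref{equation:admissible_1_graded} cleanly.

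The genuine gap is in the ``moreover'' clause. Your claim that in $W_\mfrg$ the relation $w'=\sigma_\alpha w$ with $l(w')=l(w)+1$ ``says $\Phi_{w'}\supseteq\Phi_w$ with $|\Phi_{w'}|=|\Phi_w|+1$'' is false for general Weyl group elements: in type $A_2$ take $w=\sigma_{\alpha_1}$ and $w'=\sigma_{\alpha_2}\sigma_{\alpha_1}$, so $w'=\sigma_{\alpha_2}\circ w$ and the length goes up by one, yet $\Phi_w=\{\alpha_1\}$ while $\Phi_{w'}=\{\alpha_2,\alpha_1+\alpha_2\}$, which does not contain $\Phi_w$. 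The statement that length-one covers inside $W^\mfrp$ correspond exactly to adjoining the single root $\alpha$ to $\Phi_w$ is special to the $|1|$-graded situation (both $\Phi_w$ and $\Phi_{w'}$ must lie in $\Delta(\mfru)$, and one has to analyse how $\sigma_\alpha$ permutes $\Delta(\mfru)$ using that $\mfru$ is abelian); this is precisely the nontrivial content of the cited result, not a ``short computation with reflections'' valid in all of $W_\mfrg$. Since you, like the paper, would ultimately cite \cite[3.2.]{cap2009parabolic}, this does not sink the write-up, but as a self-contained proof the ``moreover'' part is not established by your argument and would need the $|1|$-graded hypothesis to enter in an essential way.
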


For a weight $\lambda \in \mfrh^\ast$ integral and dominant for $\mfrg$, we write $F(\lambda)$ for the finite-dimensional, irreducible representation of $\mfrg$ with highest weight $\lambda$, and $E(\lambda)$ for its dual. If $\lambda$ is $\mfrp$-dominant, we write $F_\mfrp(\lambda)$ for the finite-dimensional, irreducible representation of $\mfrl$ with highest weight $\lambda$, and with $\mfru$ acting by $0$. We write $E_\mfrp(\lambda)$ for its dual. The same notation will be used for the group representations. In a $|1|$-graded case, the one-dimensional center of $\mfrl$ acts by the scalar $\lambda(E)=\frac{2\langle \lambda, \omega\rangle}{\langle \alpha, \alpha\rangle}$, where $E$ is the \be{grading element} (the unique element from the center of $\mfrl$ acting as $1$ on $\mfru$), $\alpha$ is the crossed simple root, and $\omega$ the corresponding fundamental weight. This scalar is called the \be{generalized conformal weight}.

\subsection{Geometric setup}
The Dynkin notation for $\mfrp$ will also denote the corresponding parabolic subgroup $P \subseteq G$, and the (complex) generalized flag manifold $G/P$. For two standard parabolic subgroups $Q \subseteq P$, the \be{relative Hasse diagram} $W_\mfrp^\mfrq$ of the fibration $G/Q \to G/P$ is the Hasse diagram of the parabolic $\mfrl_{\text{ss}} \cap \mfrq$ in $\mfrl_{\text{ss}}$, where $\mfrl_{\text{ss}}$ is the semisimple part of the Levi factor of $\mfrp$.

Given a finite-dimensional holomorphic representation $\pi \colon P \to \End(V)$, we can form the homogeneous holomorphic vector bundle $G \times_P V \to G/P$. Its holomorphic sections correspond to $V$-valued holomorphic functions on open subsets of $G$ that are $P$-equivariant. For $V=E_\mfrp(\lambda)$, this sheaf is denoted by $\mcaO_\mfrp(\lambda)$.

Recall the relative version of the Bott-Borel-Weil Theorem: Let $\tau \colon G/Q \to G/P$ be the obvious fibration, and $\lambda \in \mfrh^\ast$ be a $\mfrg$-integral and $\mfrp$-dominant weight. If $\lambda+\rho$ is $\mfrp$-singular, all the higher direct images $\tau^q_\ast \mcaO_\mfrq(\lambda)$ are $0$. Otherwise, there is a unique $w \in W_\mfrp \subseteq W_\mfrg$, such that $w \cdot \lambda$ is $\mfrp$-dominant (and necessarily $w^{-1} \in W_\mfrp^\mfrq$). Then, $\tau^{l(w)}_\ast \mcaO_\mfrq(\lambda) \cong \mcaO_\mfrp(w \cdot \lambda)$, and all other higher direct images are $0$.

By an \be{invariant differential operator} we will mean a $\mbbC$-linear differential operator $\mcaO_\mfrp(\lambda) \to \mcaO_\mfrp(\mu)$, invariant with respect to the left translation of sections.

\begin{remark}
\label{remark:Petree}
Peetre's theorem states that any local map between the sections (where ``local'' means that the support of a section is not increased) of vector bundles is necessarily a differential operator. See \cite[V.19.]{kolar1993natural}. 
\end{remark}

\begin{remark}
\label{remark:order=gen_conf_weigt}
In the $|1|$-graded situation, the order of a non-zero invariant differential operator is equal to the difference between the generalized conformal weights in the domain and the codomain. Such an operator is unique up to a non-zero scalar.
\end{remark}

Consider the Borel subgroup $B \subseteq P$. If there exists a non-zero invariant differential operator $\mcaO_\mfrb(\lambda) \to \mcaO_\mfrb(\mu)$, then it is unique up to a scalar (see \cite[11]{baston2016penrose}). The direct image of such a map via $G/B \to G/P$ is again an invariant differential operator, called the \be{standard} operator $\mcaO_\mfrp(\lambda) \to \mcaO_\mfrp(\mu)$. It may be zero, and there may exist invariant differential operators which are \be{non-standard}, for $P \neq B$. Standard operators are in principle completely known, but non-standard ones have not yet been classified. Here is the theorem that we want to find analogues of:

\begin{theorem}[Bernstein-Gelfand-Gelfand-Lepowsky, Čap-Slov\'{a}k-Souček]
\label{theorem:regular_BGG_geometric}
For any $\mfrg$-integral and $\mfrg$-dominant weight $\lambda$, there is a locally exact sequence on $G/P$ resolving the constant sheaf defined by $E(\lambda)$, called the \be{(regular) BGG resolution}:
\begin{equation}
\label{equation:regular_BGG}
0 \to E(\lambda) \to \Delta^\bullet(\lambda), \quad \text{where} \quad \Delta^k(\lambda)= \bigoplus_{w \in W^{\mfrp}, \ l(w)=k} \mcaO_\mfrp(w \cdot \lambda).
\end{equation}
The morphisms are the direct sums of the standard operators $\mcaO_\mfrp(w \cdot \lambda) \to \mcaO_\mfrp(w' \cdot \lambda)$ for $w \rightarrow w'$ in $W^\mfrp$, all of which are non-zero. 
\end{theorem}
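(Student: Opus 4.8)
The plan is to derive Theorem~\ref{theorem:regular_BGG_geometric} from the purely algebraic BGG--Lepowsky resolution by generalized Verma modules, passing to the geometric side via the standard duality between generalized Verma module homomorphisms and invariant differential operators. First I would recall Lepowsky's theorem: for $\lambda$ $\mfrg$-dominant integral, there is an exact complex of $\mfrg$-modules
\[
0 \to F(\lambda) \to M_0 \to M_1 \to \cdots, \qquad M_k = \bigoplus_{w \in W^\mfrp,\ l(w)=k} M_\mfrp(w\cdot\lambda),
\]
where $M_\mfrp(\mu) = U(\mfrg)\otimes_{U(\mfrp)} F_\mfrp(\mu)$ is the generalized Verma module and the arrows are built from the nonzero standard homomorphisms indexed by the covering relations $w \to w'$ in $W^\mfrp$. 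Dualizing (applying $\Hom_{U(\mfrp)}(-,\mbbC)$ appropriately, or rather using the contragredient/Jantzen duality at the level of the sheaves), and then applying the localization functor $\mathcal{N} \mapsto \mathcal{D}_{G/P}\otimes_{U(\mfrg)} \mathcal{N}$ — equivalently invoking the well-known equivalence between the category of $\mfrg$-homomorphisms $M_\mfrp(\mu)\to M_\mfrp(\nu)$ and the category of $G$-invariant differential operators $\mcaO_\mfrp(\nu^\vee)\to\mcaO_\mfrp(\mu^\vee)$ — one turns the algebraic complex into a complex of homogeneous bundles $\Delta^\bullet(\lambda)$ with $\Delta^k(\lambda)=\bigoplus_{l(w)=k}\mcaO_\mfrp(w\cdot\lambda)$, whose differentials are exactly the direct sums of standard operators. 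This already gives Remark~\ref{remark:order=gen_conf_weigt}-type uniqueness of the maps and that they are nonzero for each covering relation.

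The remaining and genuinely geometric point is local exactness of the resulting complex and the identification of its kernel (in degree $0$) with the locally constant sheaf modeled on $E(\lambda)$. Here I would follow Čap--Slovák--Souček: start from the relative de Rham / twisted Koszul resolution on $G/B$ — more efficiently, the full (Borel) BGG resolution on $G/B$ resolving the constant sheaf $E(\lambda)$ by the bundles $\bigoplus_{w\in W_\mfrg,\ l(w)=k}\mcaO_\mfrb(w\cdot\lambda)$ — push it forward along $\tau\colon G/B \to G/P$, and compute the hypercohomology spectral sequence using the relative Bott--Borel--Weil theorem recalled in the excerpt. The relative BBW computation shows that $\tau_\ast$ applied to the Borel bundle $\mcaO_\mfrb(w\cdot\lambda)$ contributes $\mcaO_\mfrp(w'\cdot\lambda)$ in a single cohomological degree when $w = w'' w'$ with $w'\in W^\mfrp$ and $w''\in W_\mfrp$ regular, and $0$ otherwise; the spectral sequence then collapses and its abutment is precisely the complex $\Delta^\bullet(\lambda)$, while the constant sheaf $E(\lambda)$ on $G/B$ pushes forward to the constant sheaf $E(\lambda)$ on $G/P$ (since the fibers $P/B$ are connected and compact, so $\tau_\ast$ of a constant sheaf is constant and higher $\tau_\ast$ vanish). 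Exactness upstairs plus the collapse downstairs yields local exactness of $0\to E(\lambda)\to\Delta^\bullet(\lambda)$.

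The main obstacle I anticipate is the bookkeeping in the spectral sequence: one must check that the only surviving differentials on the $E_1$ (or $E_2$) page are exactly the standard BGG operators (up to nonzero scalars) and that there is no room for higher differentials, which requires the combinatorial fact that the decomposition $W_\mfrg = W_\mfrp \cdot W^\mfrp$ (every element factors uniquely as a ``relative'' part times a minimal-coset-representative part, with lengths adding) interacts correctly with the covering relations — so that distinct $w,w'\in W^\mfrp$ with $l(w')=l(w)+1$ connected in $W^\mfrp$ give a map landing in the right spot and a single degree. One also needs the normalization/nonvanishing of the induced maps, which I would get either from the algebraic side (Lepowsky's maps are nonzero because the relevant $\Hom$ spaces are one-dimensional and composites along commuting squares are nonzero) or from Remark~\ref{remark:order=gen_conf_weigt}. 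Once these are in place the theorem follows; I would cite \cite{lepowsky1977generalization} and \cite{cap2001bernstein} for the two halves and present the proof as an assembly of these.
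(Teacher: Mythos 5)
First, note that the paper does not actually prove this theorem: it is quoted from the literature, with the proof deferred to \cite{cap2001bernstein} (and \cite{lepowsky1977generalization} for the algebraic original), so there is no in-paper argument to compare against and your sketch has to stand on its own. Its first half is fine as far as it goes: Lepowsky's resolution together with the duality $\Diff_G(\mcaO_\mfrp(\lambda),\mcaO_\mfrp(\mu))\cong\Hom_\mfrg(M_\mfrp(\mu),M_\mfrp(\lambda))$ produces the complex $\Delta^\bullet(\lambda)$ with nonzero standard differentials satisfying $d^2=0$. But, as you yourself acknowledge, this duality is only a contravariant identification of $\Hom$-spaces; it does not transport exactness of the Verma-module complex to local exactness of the sheaf complex, so everything hinges on your second, geometric half.

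That second half has a genuine gap. The fibers of $\tau\colon G/B\to G/P$ are the full flag varieties $P/B$ of the Levi factor, so $R^q\tau_\ast$ of the locally constant sheaf $E(\lambda)$ is locally constant with stalk $E(\lambda)\otimes H^q(P/B;\mbbC)$, which is nonzero in positive even degrees; your assertion that ``higher $\tau_\ast$ vanish'' already fails for $P/B\cong\mbbP^1$. Consistently with this, the relative Bott--Borel--Weil theorem applied to the Borel BGG resolution kills nothing: writing $w=w_\mfrp w^\mfrp$ with $w_\mfrp\in W_\mfrp$, $w^\mfrp\in W^\mfrp$ and $l(w)=l(w_\mfrp)+l(w^\mfrp)$, the term $\mcaO_\mfrb(w\cdot\lambda)$ contributes $\mcaO_\mfrp(w^\mfrp\cdot\lambda)$ in bidegree $(l(w),l(w_\mfrp))$, since $w(\lambda+\rho)$ is always $\mfrp$-regular. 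Hence each $\mcaO_\mfrp(w^\mfrp\cdot\lambda)$ occurs with multiplicity $|W_\mfrp|$ on the $E_1$ page, the abutment is $E(\lambda)\otimes H^\bullet(P/B)$ rather than $E(\lambda)$ concentrated in degree $0$, and the pushed-down complex is neither the BGG complex nor a resolution of $E(\lambda)$. The proof the paper points to proceeds differently: one resolves $E(\lambda)$ on $G/P$ itself by the holomorphic de Rham complex twisted by the tractor bundle $G\times_P E(\lambda)$ (locally exact by the holomorphic Poincar\'e lemma) and uses Kostant's Hodge theory for $\mfru^-$-homology together with the BGG splitting operators to exhibit $\Delta^\bullet(\lambda)$ as a quasi-isomorphic direct summand. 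If you want to keep the direct-image flavour, you would have to explain how to split off the $l(w_\mfrp)=0$ layer of the spectral sequence, which is essentially where the real work lies.
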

See \cite{cap2001bernstein} for a proof in the setting of parabolic geometries.

\subsection{Duality}
There is a contravariant correspondence between the sheaves $\mcaO_\mfrp(\lambda)$ and the generalized Verma modules $M_\mfrp(\lambda) = U(\mfrg) \otimes_{U(\mfrp)} F_\mfrp(\lambda)$. See \cite[11]{baston2016penrose}, \cite[appendix of the preprint]{cap2001bernstein} or \cite{jakobsen1985basic}:
\begin{equation*}
\label{equation:inv_diff_op=verma_hom}
\Diff_G(\mcaO_\mfrp(\lambda),\mcaO_\mfrp(\mu)) \cong \Hom_\mfrg(M_\mfrp(\mu),M_\mfrp(\lambda)).
\end{equation*}

\subsection{Algebraic setup}
Recall the decomposition $\mcaO^\mfrp = \bigoplus_{\lambda \in \mfrh^\ast / W_\mfrg} \mcaO^\mfrp_\lambda$ of the parabolic category $\mcaO^\mfrp$, where $\mcaO^\mfrp_\lambda$ denotes the full subcategory of $\mcaO^\mfrp$ consisting of the modules with generalized infinitesimal character $\lambda$. These subcategories are called the \be{(infinitesimal) blocks} (even though they may be decomposable, as we will see later). Any two blocks with regular generalized infinitesimal characters are mutually equivalent (Jantzen-Zuckerman translation functors), so one usually works only with the so called \be{principal block} $\mcaO^\mfrp_\rho$. The Hasse diagram $W^\mfrp$ parametrizes the $\mfrp$-dominant elements of the affine $W_\mfrg$-orbit of a dominant weight. So, $W^\mfrp$ parametrizes both the generalized Verma modules, and the simple modules in $\mcaO^\mfrp_\rho$. For details, see \cite{humphreys2008representations}. One can do similarly in the singular blocks. Take an integral weight $\lambda \in \mfrh^\ast$ such that $\lambda + \rho$ is dominant, and denote by $\Sigma$ the set of the \be{simple singular roots} for $\lambda$:
\[ \Sigma = \left\{ \alpha \in \Pi \colon \langle \lambda + \rho ,\check{\alpha} \rangle =0 \right\}. \]
The subgroup of $W_\mfrg$ generated by $\{ \sigma_\alpha \colon \alpha \in \Sigma \}$, denoted by $W_\Sigma$, is equal to the stabilizer $\{ z \in W_\mfrg \colon z \cdot \lambda = \lambda \}$. So, $\lambda+\rho$ is regular if and only if $\Sigma = \emptyset$. The \be{singular Hasse diagram} attached to the pair $(\mfrp, \Sigma)$ is
\[ W^{\mfrp, \Sigma} := \left\{ w \in W^\mfrp \colon w \sigma_\alpha \in W^\mfrp \text{ and } w < w \sigma_\alpha, \text{ for all } \alpha \in \Sigma \right\} \subseteq W^\mfrp . \]
\begin{proposition}[\cite{boe2005representation}]
\label{proposition:singular_Hasse}
The singular Hasse diagram $W^{\mfrp, \Sigma}$ is precisely the set of unique minimal length representatives of the left cosets $w W_\Sigma$ of $W_\Sigma$ in $W_\mfrg$ that are contained in $W^\mfrp$. Therefore, $W^{\mfrp, \Sigma}$ parametrizes the $\mfrp$-dominant elements of the affine orbit $W_\mfrg \cdot \lambda$.
\end{proposition}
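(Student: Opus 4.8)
The plan is to prove the two assertions separately: first, the combinatorial identification of $W^{\mfrp,\Sigma}$ with the set of minimal‑length representatives of exactly those left cosets $wW_\Sigma$ that lie \emph{entirely} inside $W^\mfrp$; and then, the ``therefore'' clause, matching this set with the $\mfrp$‑dominant members of $W_\mfrg\cdot\lambda$ by a direct computation with the affine action. (This is Boe's result, so what follows is a recollection of the argument in the present notation.) Throughout I will use two standard facts. \emph{(a)} Every left coset $wW_\Sigma$ contains a unique element of minimal length, and $w$ is that element iff $l(w\sigma_\alpha)=l(w)+1$, equivalently $w\alpha\in\Delta^+(\mfrg,\mfrh)$, for all $\alpha\in\Sigma$; for such $w$ one moreover has $w\gamma\in\Delta^+(\mfrg,\mfrh)$ for every positive root $\gamma$ in the subsystem $\Delta\cap\mbbZ\Sigma$. \emph{(b)} The defining condition $\Phi_w\subseteq\Delta(\mfru)$ of $W^\mfrp$ is equivalent to $w^{-1}\beta\in\Delta^+(\mfrg,\mfrh)$ for all $\beta\in\Delta^+(\mfrl,\mfrh)$, equivalently for all simple roots $\beta$ of $\mfrl$, i.e.\ $W^\mfrp$ is the set of minimal‑length representatives of the right cosets $W_\mfrl w$. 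I will also use the elementary bookkeeping fact that $W_\Sigma$ preserves $\Delta^+(\mfrg,\mfrh)\setminus\mbbZ\Sigma$ and acts on $\Delta^+(\mfrg,\mfrh)\cap\mbbZ\Sigma$ through its full reflection group, so a nonempty subset of the latter is moved partly to negative roots by the longest element of $W_\Sigma$.

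For the set equality: if $w$ is the minimal representative of a coset contained in $W^\mfrp$, then $w\in W^\mfrp$, each $w\sigma_\alpha$ ($\alpha\in\Sigma$) lies in the same coset hence in $W^\mfrp$, and $l(w\sigma_\alpha)>l(w)$ by minimality, so $w\in W^{\mfrp,\Sigma}$. Conversely, let $w\in W^{\mfrp,\Sigma}$; the length condition makes $w$ minimal in $wW_\Sigma$, so it remains to show $wz\in W^\mfrp$ for all $z\in W_\Sigma$. Writing $W^\mfrp$ out via (b) and splitting $w^{-1}\big(\Delta^+(\mfrl,\mfrh)\big)$ into its part outside $\mbbZ\Sigma$ (which $W_\Sigma$ keeps positive) and its part inside $\mbbZ\Sigma$, one sees that $wz\in W^\mfrp$ can fail for some $z$ only if $w^{-1}\beta\in\mbbZ\Sigma$ for some simple root $\beta$ of $\mfrl$. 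But then, expanding $w^{-1}\beta$ as a nonnegative integer combination of the simple roots in $\Sigma$ and applying $w$ — whose values on $\Delta^+(\mfrg,\mfrh)\cap\mbbZ\Sigma$ are positive by (a) — exhibits the simple root $\beta$ as a sum of positive roots, forcing a single summand, i.e.\ $w\alpha\in\Delta^+(\mfrl,\mfrh)$ for some $\alpha\in\Sigma$. Applying $\sigma_\alpha w^{-1}$ to this positive $\mfrl$‑root yields $-\alpha<0$, so $w\sigma_\alpha\notin W^\mfrp$, contradicting $w\in W^{\mfrp,\Sigma}$.

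For the ``therefore'' clause: since $W_\Sigma=\Stab(\lambda)$ for the affine action (recorded in the text just above the statement), $W_\mfrg\cdot\lambda$ is in bijection with $W_\mfrg/W_\Sigma$, and I take the minimal representative $w$ in each coset. For such $w$ the weight $w\cdot\lambda=w(\lambda+\rho)-\rho$ is $\mfrp$‑dominant iff $\langle\lambda+\rho,w^{-1}\check\beta\rangle>0$ for every simple root $\beta$ of $\mfrl$; because $\lambda+\rho$ is dominant and its zero locus among positive roots is exactly $\Delta^+(\mfrg,\mfrh)\cap\mbbZ\Sigma$, this says precisely that $w^{-1}\beta\in\Delta^+(\mfrg,\mfrh)$ and $w^{-1}\beta\notin\mbbZ\Sigma$ for all such $\beta$. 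By (b) the first half is ``$w\in W^\mfrp$'', and by the computation in the previous paragraph the conjunction of the two halves is equivalent to ``$wW_\Sigma\subseteq W^\mfrp$''. Hence the $\mfrp$‑dominant elements of $W_\mfrg\cdot\lambda$ correspond bijectively to the cosets lying inside $W^\mfrp$, i.e.\ to $W^{\mfrp,\Sigma}$.

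The main obstacle is the converse inclusion in the set equality: one is handed membership in $W^\mfrp$ only of $w$ and of the single reflections $w\sigma_\alpha$ ($\alpha\in\Sigma$), and must upgrade this to membership of the whole coset $wW_\Sigma$. The leverage is the combination of the minimal‑representative property $w\big(\Delta^+(\mfrg,\mfrh)\cap\mbbZ\Sigma\big)\subseteq\Delta^+(\mfrg,\mfrh)$ with the trivial observation that a simple root is never a sum of two or more positive roots; keeping straight which roots $W_\Sigma$ is actually allowed to move to the negative side is the part most prone to slips.
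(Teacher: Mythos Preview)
The paper does not actually supply a proof of this proposition: it is quoted verbatim from \cite{boe2005representation} and used as a black box. Your argument, by contrast, is a complete and correct self-contained proof. The forward inclusion is immediate; the real content is your converse, where the key observation is that if $w$ is minimal in $wW_\Sigma$ and $w^{-1}\beta$ lands in the $\Sigma$-subsystem for some simple $\mfrl$-root $\beta$, then linearity of $w$ together with the indecomposability of simple roots forces $\beta=w\alpha$ for a single $\alpha\in\Sigma$, whence $(w\sigma_\alpha)^{-1}\beta=-\alpha<0$ and $w\sigma_\alpha\notin W^\mfrp$. This simultaneously settles both the coset characterisation and the equivalence needed for the ``therefore'' clause, so the reuse of that computation in the second part is legitimate. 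One small point worth making explicit for the reader: in the $\mfrp$-dominance step you silently use that $\langle\rho,\check\beta\rangle=1$ for simple $\beta$, so that $\langle w\cdot\lambda,\check\beta\rangle\geq 0$ is equivalent to $\langle\lambda+\rho,\check{(w^{-1}\beta)}\rangle>0$; this is what converts dominance into the strict positivity condition you then analyse.
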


There is a certain equivalence between a singular block and some regular blocks of some other type, called the \be{Enright-Shelton equivalence}. See \cite[5.5]{enright2014diagrams}.

\subsection{The Penrose transform}

A standard reference is the book \cite{baston2016penrose}. Choose standard parabolic subgroups $P, R \subseteq G$. Their intersection $Q=P \cap R$ is also a standard parabolic subgroup. Choose an open subset $X \subseteq G/P$, and define $Y:=\tau^{-1}(X)$ and $Z:=\eta(Y)$. The subsets $Y$ and $Z$ are open submanifolds of $G/Q$ and $G/R$, respectively. We have the \be{double fibration}, and the \be{restricted double fibration}:
\[ \xymatrix@=.7em{ & G/Q \ar[dl]_{\eta} \ar[dr]^{\tau}& & & & Y \ar[dl]_{\eta} \ar[dr]^{\tau}& \\
G/R & & G/P, & & Z & & X. } \]
The spaces $G/R$ and $Z$ are usually called the \be{twistor spaces}. Start with a weight $\lambda$, $\mfrg$-integral and $\mfrr$-dominant, and form the sheaf $\mcaO_\mfrr(\lambda)$ on $Z \subseteq G/R$. Consider the topological inverse image sheaf $\eta^{-1}\mcaO_{r}(\lambda)$ on $Y$, whose sections correspond to the sections of the pull-back bundle that are constant on the fibers of $\eta$. The weight $\lambda$ remains dominant on the fibers of $\eta$, which themselves are generalized flag manifolds. By resolving $\eta^{-1}\mcaO_{r}(\lambda)|_{\eta^{-1}(x)}$ over each fiber, one obtains an exact sequence of sheaves on $G/Q$ and standard invariant differential operators, called the \be{relative BGG resolution}:
\begin{equation}\label{equation:penrose_relativeBGG}
0 \to \eta^{-1} \mcaO_\mfrr(\lambda) \to \Delta_\eta^\bullet(\lambda), \quad \text{where } \ \Delta_\eta^k(\lambda)= \bigoplus_{w \in W_\mfrr^{\mfrq}, \ l(w)=k} \mcaO_\mfrq(w \cdot \lambda).
\end{equation}
For a full treatment of the relative BGG sequences, see \cite{cap2016relativeI}, \cite{cap2015relativeII}. The \be{hypercohomology spectral sequence} applied to the exact sequence (\ref{equation:penrose_relativeBGG}) has the form
\begin{equation}
\label{equation:penrose_hypercohomology}
E_1^{pq} = H^p(Y,\Delta^q_\eta(\lambda)) \ \Longrightarrow \ H^{p+q}(Y,\eta^{-1}\mcaO_\mfrr(\lambda)).
\end{equation}
Consider the higher direct images along $\tau$ of the sequence (\ref{equation:penrose_relativeBGG}). Let us assume that $X \subseteq G/P$ is an open \be{Stein subset}, for example the big affine cell, or an open ball or a polydisc inside the big affine cell. By the Bott-Borel-Weil Theorem, the sheaves $\tau^q_\ast \Delta^k_\eta(\lambda)$ are locally free, and therefore coherent. Cartan's theorem B implies that for each $k \geq 0$ the Leray spectral sequence for $\tau^q_\ast$ collapses, and gives isomorphisms $H^q(Y,\Delta^k_\eta(\lambda)) \cong \Gamma(X,\tau^q_\ast \Delta^k_\eta(\lambda))$, for $k \geq 0$. This settles the left-hand side of (\ref{equation:penrose_hypercohomology}). For the right-hand side, if the fibers of $\eta \colon Y \to Z$ are smoothly contractible, then there are canonical isomorphims $H^r(Y, \eta^{-1}\mcaO_\mfrr(\lambda)) \cong H^r(Z, \mcaO_\mfrr(\lambda))$ for $r \geq 0$, \cite{buchdahl1983on}.
\begin{theorem}[Baston-Eastwood]
If $X \subseteq G/P$ is Stein, and the fibers of $\eta \colon Y \to Z$ are smoothly contractible, there is a first quadrant spectral sequence:
\begin{equation}\label{equation:penrose_spectral}
E_1^{pq} = \Gamma(X,\tau^q_\ast \Delta^p_\eta(\lambda)) \ \Longrightarrow \ H^{p+q}(Z, \mcaO_\mfrr(\lambda)).
\end{equation}
On the first page, the differentials are standard operators (induced from the relative BGG), but on the other pages we have non-standard invariant differential operators.
\end{theorem}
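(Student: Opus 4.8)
The plan is to feed the relative BGG resolution \eqref{equation:penrose_relativeBGG} into a hypercohomology spectral sequence on $Y$ and then rewrite both of its edge terms using the analytic facts already collected above. First I would record that, since $\Delta_\eta^\bullet(\lambda)$ is a finite resolution of $\eta^{-1}\mcaO_\mfrr(\lambda)$ by sheaves on $Y$, the hypercohomology spectral sequence \eqref{equation:penrose_hypercohomology} is a first-quadrant spectral sequence whose $E_1$-page consists of the groups $H^\bullet(Y,\Delta_\eta^\bullet(\lambda))$, whose $E_1$-differential is sheaf cohomology applied to the maps of \eqref{equation:penrose_relativeBGG}, and which converges to $H^{p+q}(Y,\eta^{-1}\mcaO_\mfrr(\lambda))$; transposing the bidegree $(p,q)$ if necessary --- which leaves the total degree, hence the abutment, untouched --- I would arrange that $q$ records the sheaf-cohomology degree and $p$ the resolution degree, matching the layout of \eqref{equation:penrose_spectral}.

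Next I would rewrite the $E_1$-terms. By the relative Bott--Borel--Weil theorem, every direct image $\tau^q_\ast\Delta_\eta^p(\lambda)$ is locally free, hence coherent, on $X$. Because $X$ is Stein, Cartan's Theorem B gives $H^a\bigl(X,\tau^q_\ast\Delta_\eta^p(\lambda)\bigr)=0$ for $a>0$, so the Leray spectral sequence of $\tau\colon Y\to X$ for the sheaf $\Delta_\eta^p(\lambda)$ degenerates and yields $H^q\bigl(Y,\Delta_\eta^p(\lambda)\bigr)\cong\Gamma\bigl(X,\tau^q_\ast\Delta_\eta^p(\lambda)\bigr)$. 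For the abutment I would invoke Buchdahl's theorem: the fibres of $\eta\colon Y\to Z$ are smoothly contractible, so $\eta^{-1}$ induces $H^r\bigl(Y,\eta^{-1}\mcaO_\mfrr(\lambda)\bigr)\cong H^r\bigl(Z,\mcaO_\mfrr(\lambda)\bigr)$ for all $r\ge0$. Substituting these two identifications into \eqref{equation:penrose_hypercohomology} produces exactly the spectral sequence \eqref{equation:penrose_spectral}.

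It then remains to describe the differentials. On the first page $d_1$ is, by construction, $\Gamma(X,\tau^q_\ast(-))$ applied to the arrows of the relative BGG resolution; those arrows are direct sums of standard invariant operators (the relative form of Theorem \ref{theorem:regular_BGG_geometric}), and their direct images are again standard operators between the bundles $\mcaO_\mfrp(\cdot)$ on $X$, so $d_1$ is standard. For $r\ge2$ the differential $d_r$ is produced by the usual zig-zag through the double complex; a priori it is merely a $\mbbC$-linear map between spaces of holomorphic sections of homogeneous bundles on $X$, but it does not enlarge supports, so by Peetre's theorem (Remark \ref{remark:Petree}) it is a differential operator, and the $G$-equivariance built into the whole construction makes it invariant. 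There is no reason these higher differentials should be standard --- and that they need not be is precisely what makes this machinery useful in singular infinitesimal characters.

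Since every analytic ingredient --- exactness of the relative BGG resolution, the relative Bott--Borel--Weil theorem, Cartan's Theorem B together with the collapse of the Leray spectral sequence, and Buchdahl's contractibility theorem --- has already been established, I do not anticipate a genuine difficulty here: the proof is an assembly of these facts. The only points that demand attention are the indexing conventions for the hypercohomology spectral sequence (so that the final bidegree agrees with \eqref{equation:penrose_spectral}) and the argument, via locality and Peetre's theorem, that the higher differentials are invariant differential operators at all.
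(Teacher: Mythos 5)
Your proposal is correct and follows essentially the same route as the paper: the hypercohomology spectral sequence of the relative BGG resolution, with the $E_1$-terms identified via relative Bott--Borel--Weil, Cartan's Theorem B and the collapse of the Leray spectral sequence for $\tau$, and the abutment identified via Buchdahl's theorem on the contractible fibres of $\eta$. Your explicit handling of the index transposition between \eqref{equation:penrose_hypercohomology} and \eqref{equation:penrose_spectral}, and the use of locality plus Peetre's theorem (Remark \ref{remark:Petree}) to see that the higher differentials are invariant differential operators, match the paper's intent exactly.
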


\section{Structure of the Hasse diagrams}

\subsection{Type C}
We specialize to $G=\Sp(2n,\mbbC)=\begin{dynkin}
\setlength{\dynkinstep}{1cm}
\dynkinline{1}{0}{2}{0}
\dynkindots{2}{0}{3}{0}
\dynkinline{3}{0}{4}{0}
\dynkindoubleline{4}{0}{5}{0}
\dynkindot{1}{0}
\dynkinlabel{1}{0}{below}{$\alpha_1$}
\dynkindot{2}{0}
\dynkinlabel{2}{0}{below}{$\alpha_2$}
\dynkindot{3}{0}
\dynkinlabel{3}{0}{below}{$\alpha_{n-2}$}
\dynkindot{4}{0}
\dynkinlabel{4}{0}{below}{$\alpha_{n-1}$}
\dynkindot{5}{0}
\dynkinlabel{5}{0}{below}{$\alpha_n$}
\end{dynkin}$, the complex symplectic group. Choose the Cartan subalgebra consisting of diagonal matrices $\mfrh \subseteq \mfrg=\mfrsp(2n,\mbbC)$. The positive roots are:
\[ \Delta^+(\mfrg,\mfrh) = \{  a_{ij} = \epsilon_i - \epsilon_j , \quad b_i = 2 \epsilon_i , \quad c_{ij} =  \epsilon_i+ \epsilon_j \ \colon \ 1\leq i < j \leq n \},
\]
where $\epsilon_i$ denotes the projection to $i$-th coordinate. The simple roots are $\alpha_i=a_{i,i+1}$ for $i=1,\ldots,n-1$, and $\alpha_n=b_n$. A weight $\lambda = [\lambda_1, \lambda_2, \ldots, \lambda_n ] \in \mfrh^\ast$ is integral if all $\lambda_i \in \mbbZ$, and dominant if $\lambda_1 \geq \lambda_2 \geq \ldots \lambda_n \geq 0$. A weight is regular if and only if it does not have two coordinates with the same absolute value, and all the coordinates are non-zero. The half sum of all positive roots is $\rho = [n,n-1,\ldots,1]$. The fundamental weights are $\omega_i = \epsilon_1 + \ldots + \epsilon_i$, $1\leq i \leq n$. The Weyl group acts by permutations and sign changes of the coordinates.

Weights for the Levi subalgebra of a standard parabolic subalgebra can be written as $n$-tuples again, but for every crossed node $\alpha_i$ in the Dynkin diagram for the parabolic subalgebra, we will put a bar after the $i$-th coordinate of the weight.

\subsection{|1|-graded parabolic subalgebra}
Fix $\mfrp = \mfrl \oplus \mfru = \begin{dynkin}
\dynkinline{1}{0}{2}{0}
\dynkindots{2}{0}{3}{0}
\dynkinline{3}{0}{4}{0}
\dynkindoubleline{4}{0}{5}{0}
\dynkindot{1}{0}
\dynkindot{2}{0}
\dynkindot{3}{0}
\dynkindot{4}{0}
\dynkincross{5}{0}
\end{dynkin}$, which has $\Delta^+(\mfrl,\mfrh) = \{a_{ij} \colon i<j \}$, and $\Delta(\mfru) = \{b_i  \} \cup \{c_{ij} \colon i<j \}$. Moreover, $\mfrl \cong \mfrgl(n,\mbbC)$, and $\mfru$ consists of the matrices of the form $\begin{pmatrix} 0 & C \\ 0 & 0\end{pmatrix}$, where $C$ is an $n \times n$ symmetric matrix. The grading element is $E=\frac{1}{2}\diag (\underbrace{1,\ldots,1}_n, -1,\ldots,-1)$. The generalized flag manifold corresponding to this parabolic subalgebra is known as the (complex) \be{Lagrangian Grassmannian}, denoted by $\iGr(n,2n)$. It can be realized as the space of all maximal isotropic (Lagrangian) subspaces in a fixed $2n$-dimensional symplectic vector space.

\subsection{Generalized Young diagrams}

Elements of the Hasse diagram $W^\mfrp$ will be represented, using the bijection from Proposition \ref{proposition:hasse_bijection_saturated}, as the admissible subsets of $\Delta(\mfru)$. The point of this identification is that the same proposition also provides a simple criterion for the arrow relation. Note that we can write $\Delta(\mfru)$ in the form that makes the additive structure transparent: Figure \ref{figure:Delta_u_1gr}.

\begin{figure}[h]
$\xymatrix@=1em{
c_{1n}  \ar@{..>}[r]^{+\alpha_{n-1}} & c_{1,n-1} \ar@{..>}[r]^{+\alpha_{n-2}} &  \ldots \ar@{..>}[r]^{+\alpha_2} & c_{12} \ar@{..>}[r]^{+\alpha_1} & b_1 \\
c_{2n} \ar@{..>}[r]\ar@{..>}[u]^{+\alpha_1} & c_{2,n-1} \ar@{..>}[r]\ar@{..>}[u] & \ldots \ar@{..>}[r] & b_2 \ar@{..>}[u] \\
\vdots\ar@{..>}[u]^{+\alpha_2} & \vdots\ar@{..>}[u] & \reflectbox{$\ddots$} \\
c_{n-1,n} \ar@{..>}[r]\ar@{..>}[u]^{+\alpha_{n-2}} & b_{n-1} \ar@{..>}[u] \\
b_n=\alpha_n \ar@{..>}[u]^{+\alpha_{n-1}}}$
\caption[$\Delta(\mfru)$ for $|1|$-graded case.]{$\Delta(\mfru)$ for $\mfrp = \begin{dynkin}
\dynkinline{1}{0}{2}{0}
\dynkindots{2}{0}{3}{0}
\dynkinline{3}{0}{4}{0}
\dynkindoubleline{4}{0}{5}{0}
\dynkindot{1}{0}
\dynkindot{2}{0}
\dynkindot{3}{0}
\dynkindot{4}{0}
\dynkincross{5}{0}
\end{dynkin}$}
\label{figure:Delta_u_1gr}
\end{figure}
\begin{proposition}
Denote $b_i = c_{ii}$. A subset $S \subseteq \Delta(\mfru)$ is admissible if and only if
\begin{equation}
\label{equation:admissible_C}
c_{ij} \in S, \ i \leq j, \quad \Longrightarrow \quad c_{kl} \in S \quad \text{for all} \quad k \geq i, \ l \geq j, \ k \leq l \leq n.
\end{equation}
\end{proposition}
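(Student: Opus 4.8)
The plan is to translate the general admissibility criterion~\eqref{equation:admissible_1_graded} from Proposition~\ref{proposition:hasse_bijection_saturated} into the concrete combinatorics of $\Delta(\mfru)$ in type C, using the explicit additive structure displayed in Figure~\ref{figure:Delta_u_1gr}. Recall that $\Delta^+(\mfrl,\mfrh) = \{ a_{ij} = \epsilon_i - \epsilon_j \colon i<j \}$, and that with the convention $b_i = c_{ii}$ the elements of $\Delta(\mfru)$ are exactly the $c_{ij}$ with $1 \leq i \leq j \leq n$. So I would begin by recording the complete list of ``root-plus-root'' relations of the form $c_{ij} = c_{kl} + \beta$ with $c_{kl} \in \Delta(\mfru)$ and $\beta \in \Delta^+(\mfrl,\mfrh)$: writing $c_{kl} = \epsilon_k + \epsilon_l$ and $\beta = \epsilon_p - \epsilon_q$ with $p<q$, the sum $\epsilon_k + \epsilon_l + \epsilon_p - \epsilon_q$ lies in $\Delta(\mfru)$ precisely when $q \in \{k,l\}$, i.e.\ when the negative index of $\beta$ cancels one of the two indices of $c_{kl}$. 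This gives exactly two families: $c_{kl} + (\epsilon_p - \epsilon_k) = c_{pl}$ for $p < k$ (lowering the first index) and $c_{kl} + (\epsilon_p - \epsilon_l) = c_{kp}$ for $p < l$ (lowering the second index), in each case subject to keeping the index pair ordered. Equivalently: $c_{ij}$ is obtained from $c_{kl}$ by adding a positive root of $\mfrl$ iff one can pass from $(k,l)$ to $(i,j)$ by decreasing indices (with the ordered-pair bookkeeping $i \leq j$), which is exactly the componentwise order $i \leq k$, $j \leq l$ read in the ``upward-left'' direction of Figure~\ref{figure:Delta_u_1gr}.

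Next I would feed this into~\eqref{equation:admissible_1_graded}. That condition says: if $\alpha \in \Delta(\mfru)$ and $\beta \in \Delta^+(\mfrl,\mfrh)$ with $\alpha + \beta \in S$, then $\alpha \in S$. Taking $\alpha = c_{kl}$ and $\alpha + \beta = c_{ij}$ and using the description just obtained, this reads: whenever $c_{ij} \in S$ and $(k,l)$ is obtained from $(i,j)$ by \emph{increasing} indices (i.e.\ $k \geq i$, $l \geq j$, keeping $k \leq l$), then $c_{kl} \in S$. But this is precisely the implication~\eqref{equation:admissible_C}, once one checks that it suffices to verify it for $\beta$ \emph{simple}, or rather that it suffices to verify the generation-step implication (decrease one index by one) — these single steps compose to give the full statement, and conversely~\eqref{equation:admissible_C} clearly implies each single step. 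The upper bound $l \leq n$ is automatic from $c_{kl} \in \Delta(\mfru)$. One direction of the proposition is then immediate, and for the converse I would note that any $\beta \in \Delta^+(\mfrl,\mfrh)$ with $c_{kl}+\beta \in \Delta(\mfru)$ is of one of the two forms above, so~\eqref{equation:admissible_C} applied with $(k,l)$ in the role of the larger pair and $(i,j)$ in the role of the smaller recovers~\eqref{equation:admissible_1_graded}.

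The main obstacle, such as it is, is purely bookkeeping: one must be careful with the cases where an index equality $k = l$ occurs (the ``diagonal'' $b_k = c_{kk}$ entries), since then $\beta = \epsilon_p - \epsilon_k$ with $p < k$ produces $c_{pk}$ with $p \neq k$ — a genuinely off-diagonal root — and conversely some $c_{ij}$ with $i<j$ can have $b_i = c_{ii}$ among the elements forced into $S$; the asymmetry between the two index slots must be handled so that the ordered-pair condition $k \leq l$ is never violated. I do not expect any deeper difficulty: the whole content is that the partial order on $\Delta(\mfru)$ generated by ``$+\,\Delta^+(\mfrl,\mfrh)$'' is exactly the componentwise order on the index pairs $(i,j)$, $i \leq j$, displayed in Figure~\ref{figure:Delta_u_1gr}, and~\eqref{equation:admissible_C} is the statement that $S$ is an up-set for the opposite order. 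I would close by remarking that this reformulation is what makes the ``generalized Young diagram'' picture of the next subsection legitimate: admissible subsets are the order ideals, i.e.\ the staircase shapes, in that poset.
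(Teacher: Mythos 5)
Your proposal is correct and follows essentially the same route as the paper: the paper's proof is a one-line appeal to Figure~\ref{figure:Delta_u_1gr}, noting that sums of labels of consecutive arrows lie in $\Delta^+(\mfrl,\mfrh)$, so that condition~(\ref{equation:admissible_C}) becomes the translation of~(\ref{equation:admissible_1_graded}) into the componentwise order on index pairs. Your write-up merely makes explicit the two points the paper leaves implicit — which single positive roots of $\mfrl$ connect two elements of $\Delta(\mfru)$, and that the simple-root steps compose to generate the full componentwise order — so no further comment is needed.
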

\begin{proof}
Note that sums of the labels of the consecutive arrows in Figure \ref{figure:Delta_u_1gr} are elements of $\Delta^+(\mfrl,\mfrh)$. Then the condition (\ref{equation:admissible_C}) is equivalent to (\ref{equation:admissible_1_graded}).
\end{proof}

\begin{wrapfigure}{l}{0cm}
\ytableausetup{boxsize=12pt}
\begin{ytableau}
\phantom{c_{1n}} & \phantom{c_{1,n-1}} & \none[\dots]
& \phantom{c_{12}} & \phantom{b_1} \\
\phantom{c_{2n}} & \phantom{c_{2,n-1}} & \none[\dots]
& \phantom{b_2} \\
\none[\vdots] & \none[\vdots]
& \none[\reflectbox{$\ddots$}] \\
\phantom{c_{n-1,n}} & \phantom{b_{n-1}} \\
\phantom{b_n}
\end{ytableau}\ytableausetup{boxsize=\defaultboxsize}
\end{wrapfigure}
An admissible subset $S$ will be represented in the following way: for each element $c_{ij} \in S$, we put a box $\ \ydiagram{1} \ $ on the position $c_{ij}$ in Figure \ref{figure:Delta_u_1gr}. The diagram obtained this way will be called the \be{generalized Young diagram} of the corresponding Hasse diagram element. The maximal admissible subset is $\Delta(\mfru)$ itself, and we denote it by the figure on the left. The condition (\ref{equation:admissible_C}) translated into the generalized Young diagram setting is: for each box in $S$, all the possible boxes bellow, and left of it are again contained in $S$. The notion of the length and the arrow relation transfer very nicely to the generalized Young diagram setting. Namely, the length of an element in $W^\mfrp$ is equal to the number of boxes in the generalized Young diagram. Furthermore, an arrow between elements in $W^\mfrp$ corresponds to the \be{``adding one box'' operation} on the generalized Young diagrams, and the label of that arrow is the same as the label of the added box (follows from Proposition \ref{proposition:hasse_bijection_saturated}). See Figure \ref{figure:Hasse_1gr_n=4}.

\subsection{Lascoux-Sch\"utzenberger (LS) notation}
The idea is from \cite{enright2014diagrams}. Note that a generalized Young diagram is completely determined by a zig-zag line from the top left point to the diagonal. For each move to the right, we write $1$, and for each move down, we write $0$. This way we get a binary sequence of length $n$, written with overline, called the \be{LS notation} of the generalized Young diagram.

\begin{figure}[h]
\begin{gather*}
\scalebox{.8}{\begin{tabular}{cc}
\xymatrix@R=.7em{
\emptyset \ar[r]^{b_4} & \ydiagram{1} \ar[d]_{c_{34}} \\
& \ydiagram{1,1} \ar[r]^{b_3} \ar[d]_{c_{24}} & \ydiagram{2,1} \ar[d] \\
& \ydiagram{1,1,1} \ar[r] \ar[dl]_{c_{14}} & \ydiagram{1,2,1} \ar[d]^{c_{23}} \ar[dl] \\
\ydiagram{1,1,1,1} \ar[r] & \ydiagram{1,1,2,1} \ar[d] & \ydiagram{2,2,1} \ar[r]^{b_2} \ar[dl] & \ydiagram{3,2,1} \ar[dl] \\
& \ydiagram{1,2,2,1} \ar[r] \ar[d] & \ydiagram{1,3,2,1} \ar[d]^{c_{13}} \\
& \ydiagram{2,2,2,1} \ar[r] & \ydiagram{2,3,2,1} \ar[d]^{c_{12}} \\
& & \ydiagram{3,3,2,1} \ar[r]^{b_1} & \ydiagram{4,3,2,1} } & \xymatrix@R=1.7em{
\LS{0000} \ar[r] & \LS{0001} \ar[d] \\
& \LS{0010} \ar[r] \ar[d] & \LS{0011} \ar[d] \\
& \LS{0100} \ar[r] \ar[dl] & \LS{0101} \ar[d] \ar[dl] \\
\LS{1000} \ar[r] & \LS{1001} \ar[d] & \LS{0110} \ar[r] \ar[dl] & \LS{0111} \ar[dl] \\
& \LS{1010} \ar[r] \ar[d] & \LS{1011} \ar[d] \\
& \LS{1100} \ar[r] & \LS{1101} \ar[d] \\
& & \LS{1110} \ar[r] & \LS{1111} }
\end{tabular}} \\
\scalebox{.8}{\xymatrix@=.7em{
[4,3,2,1 \pbar] \ar[r] & [4,3,2,-1 \pbar] \ar[d] \\
& [4,3,1,-2 \pbar] \ar[r] \ar[d] & [4,3,-1,-2 \pbar] \ar[d] \\
& [4,2,1,-3 \pbar] \ar[r] \ar[dl] & [4,2,-1,-3 \pbar] \ar[d] \ar[dl] \\
[3,2,1,-4 \pbar] \ar[r] & [3,2,-1,-4 \pbar] \ar[d] & [4,1,-2,-3 \pbar] \ar[r] \ar[dl] & [4,-1,-2,-3 \pbar] \ar[dl] \\
& [3,1,-2,-4 \pbar] \ar[r] \ar[d] & [3,-1,-2,-4 \pbar] \ar[d] \\
& [2,1,-3,-4 \pbar] \ar[r] & [2,-1,-3,-4 \pbar] \ar[d] \\
& & [1,-2,-3,-4 \pbar] \ar[r] & [-1,-2,-3,-4 \pbar] }}
\end{gather*}
\caption[$W^\mfrp$ and $W^\mfrp \rho$ for $|1|$-graded case, $n=4$]{$W^\mfrp$ and $W^\mfrp \rho$ for $\mfrp = \begin{dynkin}
\dynkinline{1}{0}{2}{0}
\dynkinline{2}{0}{3}{0}
\dynkindoubleline{3}{0}{4}{0}
\dynkindot{1}{0}
\dynkindot{2}{0}
\dynkindot{3}{0}
\dynkincross{4}{0}
\end{dynkin}$}
\label{figure:Hasse_1gr_n=4}
\end{figure}

\begin{proposition}
As directed graphs,
\begin{equation}
\label{equation:LS_objects}
W^\mfrp \cong \left\{\LS{d_1d_2\ldots d_n} \ \colon \ d_i = 0 \text{ or } 1 \right\},
\end{equation}
with the following arrows on the right-hand side:
\begin{equation*}
\label{equation:LS_arrows}
\LS{d_1\ldots 01 \ldots d_n} \to \LS{d_1\ldots 10 \ldots d_n} \quad \text{ and } \quad \LS{d_1\ldots d_{n-1}0} \to \LS{d_1\ldots d_{n-1}1}.
\end{equation*}
Moreover, if $w$ has the digit $1$ on the positions $i_1 < i_2 < \ldots <i_k$, then
\begin{equation}
\label{equation:LS_length}
l(w) = (n+1) \cdot k - \sum_{j=1}^k i_j.
\end{equation}
\end{proposition}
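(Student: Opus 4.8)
The plan is to establish the graph isomorphism in \eqref{equation:LS_objects} by chasing the chain of bijections already set up in the excerpt, and then to read off the length formula \eqref{equation:LS_length} from the combinatorics of the zig-zag line. First I would recall that Proposition \ref{proposition:hasse_bijection_saturated} gives a bijection $W^\mfrp \to \{\text{admissible subsets of } \Delta(\mfru)\}$, and that the admissibility condition \eqref{equation:admissible_C} says exactly that an admissible subset, viewed inside the triangular array of Figure \ref{figure:Delta_u_1gr}, is ``closed under going left and going down''. Such a subset is therefore a Young-diagram-like staircase region, and is completely determined by its boundary path: the zig-zag line running from the top-left corner of the array to the diagonal. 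Encoding a rightward step by $1$ and a downward step by $0$ produces a binary word of length exactly $n$ (the array has $n$ ``columns'' $c_{\bullet 1},\dots,c_{\bullet n}=b_\bullet$ in the top row and the path must terminate on the diagonal after $n$ total steps, $k$ of them rightward and $n-k$ downward, where $k$ is the number of $1$'s). This gives the underlying bijection of sets; I would spell out the inverse map explicitly (given $\LS{d_1\cdots d_n}$, the associated admissible subset $S$ is the set of boxes weakly below and weakly left of the path), so that the next step can be checked directly.

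Next I would verify the arrow relations. By the last sentence of Proposition \ref{proposition:hasse_bijection_saturated}, $w\to w'$ in $W^\mfrp$ iff $\Phi_{w'}=\Phi_w\cup\{\alpha\}$ for a single new root $\alpha$, i.e.\ in diagram language iff the generalized Young diagram of $w'$ is obtained from that of $w$ by adding exactly one box (and the result is still admissible). So I must check: adding a single legal box to an admissible staircase corresponds on the boundary word to either (i) swapping an adjacent $\LS{01}$ to $\LS{10}$, or (ii) changing a terminal $0$ to $1$. This is a purely local check on the boundary path — adding one box at an inner corner pushes the path out by one unit there, which is precisely the transposition $01\mapsto 10$ at the relevant position; adding the box in the last column (the bottom of the array, the node $b_n=\alpha_n$ when the diagram is empty, or more generally the last available box along the diagonal) flips the final digit $0\mapsto 1$. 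Conversely every such digit move adds exactly one box and preserves admissibility, because the staircase shape is preserved. This establishes the claimed arrows in both directions, proving the isomorphism of directed graphs.

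Finally, the length formula. We already know from the excerpt that $l(w)$ equals the number of boxes in the generalized Young diagram of $w$. So it remains to count the boxes enclosed by a boundary word having $1$'s in positions $i_1<\cdots<i_k$. I would do this by summing column by column: the $j$-th rightward step of the path (the one producing the digit $1$ in position $i_j$) happens after $i_j-1$ steps of which $i_j-j$ were downward, so the column entered by that step has height $n - (i_j - j)$ — wait, more carefully, one computes the number of boxes in each of the $k$ columns that lie to the left of a rightward move and sums, or equivalently counts boxes row by row; either way one gets $\sum_{j=1}^k (n+1-i_j) = (n+1)k - \sum_{j=1}^k i_j$, which is \eqref{equation:LS_length}. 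I expect the bijection and arrow-relation bookkeeping to be routine once the ``boundary path'' picture is fixed; the one place to be careful — and the main potential obstacle — is the exact indexing in the box count, in particular correctly handling the diagonal boxes $b_i=c_{ii}$ and the degenerate last column, so that the off-by-one in the coefficient $(n+1)$ versus $n$ comes out right. I would pin this down by checking it against the $n=4$ table in Figure \ref{figure:Hasse_1gr_n=4} (e.g.\ $\LS{0101}$ has $1$'s in positions $2,4$, giving $l = 5\cdot 2 - (2+4) = 4$, matching the four-box diagram $\ydiagram{1,2,1}$).
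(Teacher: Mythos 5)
Your proposal is correct and follows essentially the same route as the paper: both arguments read the bijection and the arrow relations directly off the zig-zag boundary of the generalized Young diagram, via Proposition \ref{proposition:hasse_bijection_saturated} and the fact that $l(w)$ equals the number of boxes. The only divergence is in the length formula (\ref{equation:LS_length}), which the paper dispatches by an induction on the rank $n$, whereas you count boxes column by column to get $\sum_{j=1}^k (n+1-i_j)$ directly; both are valid, and your count is correct (the $j$-th rightward step sits atop a column of the array containing $n+1-i_j$ boxes of the diagram).
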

\begin{proof}
The bijection (\ref{equation:LS_objects}) follows from the definition of the LS notation. Obviously, the ``adding one box'' operation has the effect of switching a pair of consecutive digits $\LS{01}$ to $\LS{10}$ (if the added box is not the last possible in a row), or changing the last digit $\LS{0}$ to $\LS{1}$ (if the added box is the last possible in a row). An easy induction on the rank proves the formula (\ref{equation:LS_length}).
\end{proof}
\begin{proposition}
Let $\lambda = [\lambda_1, \lambda_2, \ldots, \lambda_n ] \in \mfrh^\ast$ and $w \in W^\mfrp$. Let $i_1 < i_2 < \ldots <i_k$ denote the positions of the digit $1$ in the LS notation for $w$. Then
\begin{equation}
\label{equation:action_in_LS}
w \lambda = [\lambda_1, \lambda_2, \ldots, \widehat{\lambda_{i_1}}, \ldots, \widehat{\lambda_{i_2}}, \ldots, \widehat{\lambda_{i_k}}, \ldots, \lambda_n, -\lambda_{i_k}, -\lambda_{i_{k-1}}, \ldots, -\lambda_{i_1} \pbar ].
\end{equation}
Coordinates with $\widehat{hat}$ are omitted. In other words, the positions of the digit $1$ are precisely the positions of the coordinates of $\rho$ that become negative in $w \rho$.
\end{proposition}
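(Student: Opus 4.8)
The plan is to compute the Weyl group element $w \in W^\mfrp$ attached to the LS sequence $\LS{d_1\ldots d_n}$ explicitly as a permutation-with-sign-changes of the coordinates, and then read off its action on an arbitrary weight $\lambda$. First I would recall that $W^\mfrp \subseteq W_\mfrg$, so $w$ really is an honest element of the Weyl group, acting by permutations and sign changes. The identification from Proposition~\ref{proposition:hasse_bijection_saturated} says $\Phi_w = \{\alpha \in \Delta^+ : w^{-1}\alpha < 0\}$ equals the admissible subset (generalized Young diagram) corresponding to the given LS sequence. The strategy is: from the shape of that admissible subset, determine $\Phi_w$ as an explicit set of roots $b_i$ and $c_{ij}$, and then reconstruct $w$ from $\Phi_w$. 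Concretely, if the $1$'s sit at positions $i_1 < \dots < i_k$, I claim that $w$ sends $\epsilon_{i_j} \mapsto -\epsilon_{\text{(last $k-j+1$ slot)}}$ while the remaining $\epsilon_i$'s (those with $d_i = 0$) are carried order-preservingly onto the first $n-k$ slots. In the notation of the $n$-tuple, $w\lambda$ is obtained by deleting the coordinates $\lambda_{i_1},\dots,\lambda_{i_k}$, keeping the rest in order, and appending $-\lambda_{i_k},\dots,-\lambda_{i_1}$ at the end — which is exactly formula~(\ref{equation:action_in_LS}).

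The cleanest way to nail this down is by induction on the rank $n$, mirroring the induction already used for the length formula~(\ref{equation:LS_length}). For the base case $n=1$, $W^\mfrp = \{\LS 0, \LS 1\}$: $\LS 0$ is the identity and $\LS 1 = \sigma_{\alpha_1} = \sigma_{b_1}$ acts by $\lambda_1 \mapsto -\lambda_1$, matching the formula. For the inductive step, I would exploit the arrow structure from the previous proposition: every $w \in W^\mfrp$ is obtained from a shorter one by either switching a $\LS{01}$ to $\LS{10}$ (corresponding to left-multiplication by a simple reflection $\sigma_{\alpha_m}$ for a short root $\alpha_m = \epsilon_m - \epsilon_{m+1}$, which just transposes two adjacent coordinates) or changing the final $\LS 0$ to $\LS 1$ (left-multiplication by $\sigma_{\alpha_n} = \sigma_{b_n}$, which negates the last coordinate). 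Tracking how the proposed normal form in~(\ref{equation:action_in_LS}) transforms under these two elementary moves is a short bookkeeping check: the transposition move swaps the roles of positions $m$ and $m+1$ in the partition into ``kept'' versus ``negated'' indices, and the final-digit move moves the last kept index into the negated block, prepending its negative. Alternatively — and I would include this as a remark — one can argue directly: since $\Phi_w = \{b_{i_j}\} \cup \{c_{i_j, l} : \text{appropriate } l\}$ consists precisely of the positive roots that become negative under $w^{-1}$, and a root $\epsilon_a \pm \epsilon_b$ becomes negative exactly when $w^{-1}$ sends it to a negative root, one can solve for where $w$ must send each $\epsilon_i$; the combinatorics of which $b_i$ and $c_{ij}$ lie in the admissible subset forces the stated answer.

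The last sentence of the statement — that the positions of the digit $1$ are exactly the positions where the coordinates of $\rho$ turn negative in $w\rho$ — is then an immediate corollary: set $\lambda = \rho = [n, n-1, \dots, 1]$, all of whose coordinates are distinct and positive, so in~(\ref{equation:action_in_LS}) the coordinates that come out negative in $w\rho$ are precisely $-\rho_{i_k}, \dots, -\rho_{i_1}$, i.e.\ those indexed by $i_1 < \dots < i_k$. I expect the main obstacle to be purely notational rather than mathematical: keeping the indexing straight when the ``kept'' coordinates are reindexed into a shorter tuple during the induction, and making sure the reversal in the order of the appended negated coordinates ($-\lambda_{i_k}$ first, down to $-\lambda_{i_1}$) is consistent with the chosen convention for the LS zig-zag (reading the $1$'s left to right). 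A careful statement of which admissible subset of $\Delta(\mfru)$ corresponds to which LS word — ideally cross-referenced to Figure~\ref{figure:Hasse_1gr_n=4}, where the $n=4$ case is worked out in all three notations simultaneously — should make the verification transparent, and indeed the figure already exhibits the claimed bijection in every row.
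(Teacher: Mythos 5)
Your overall strategy is genuinely different from the paper's. The paper does not induct at all: it reads the generalized Young diagram of $w$ column by column and writes $w$ explicitly as a product $C_k\circ\cdots\circ C_1$ of reflections $\sigma_{c_{ij}}$ and $\sigma_{b_j}$ (one factor per box), then simply applies that product to $\lambda$. Your plan instead builds $w$ up from the identity along the arrows of the Hasse diagram and tracks formula~(\ref{equation:action_in_LS}) through the two elementary LS moves. That route is viable and arguably more self-contained, since it leans only on the already-established arrow description of $W^\mfrp$ in LS notation; the paper's route has the advantage of producing a closed-form reduced-style expression for $w$ in one stroke, with no induction bookkeeping. (Also, you announce ``induction on the rank $n$'' but your inductive step is really induction on $l(w)$ within a fixed rank; say so explicitly.)

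There is, however, one concrete error you must repair before the bookkeeping goes through. You identify the move $\LS{\ldots 01\ldots}\to\LS{\ldots 10\ldots}$ at positions $(m,m+1)$ with \emph{left} multiplication by $\sigma_{\alpha_m}=\sigma_{\epsilon_m-\epsilon_{m+1}}$, ``which just transposes two adjacent coordinates.'' Left multiplication by $\sigma_{\epsilon_m-\epsilon_{m+1}}$ transposes the $m$-th and $(m+1)$-st coordinates \emph{of the output tuple} $w\lambda$, which is not the effect you want and generally destroys $\mfrp$-dominance: e.g.\ for $n=4$, $w=\LS{0001}$, $w\rho=[4,3,2,-1\pbar]$, the target of the move is $[4,3,1,-2\pbar]$, whereas $\sigma_{\epsilon_3-\epsilon_4}\cdot[4,3,2,-1\pbar]=[4,3,-1,2\pbar]$. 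The correct statement is that the move is \emph{right} multiplication $w'=w\sigma_{\alpha_m}$ (equivalently, left multiplication by $\sigma_{c_{ab}}$ for the non-simple root $c_{ab}\in\Delta(\mfru)$ labelling the added box, as in Proposition~\ref{proposition:hasse_bijection_saturated}); right multiplication permutes the \emph{input} coordinates $\lambda_m\leftrightarrow\lambda_{m+1}$ before $w$ acts, and this is exactly the ``swap the roles of $m$ and $m+1$ in the kept/negated partition'' effect you describe in the next sentence. The same correction applies to the final-digit move: it is $w'=w\sigma_{b_n}$, not left multiplication by $\sigma_{b_n}$ (which would negate the last coordinate of $w\lambda$, i.e.\ $-\lambda_{i_1}$, rather than $\lambda_n$). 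With the moves reinterpreted as right multiplication by simple reflections, your verification of the two cases is correct and the induction closes.
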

\begin{proof}
Suppose first that $k=1$. Then $w=\LS{0\ldots010\ldots0}$, and the corresponding generalized Young diagram has just one column of $n+1-i_1$ boxes. It follows that
\[ w = \sigma_{c_{i_1,n}} \circ \ldots \circ \sigma_{c_{n-1,n}} \circ \sigma_{b_n}. \]
Applying this composition to $\lambda$ gives $[\lambda_1, \ldots, \widehat{\lambda_{i_1}}, \ldots, \lambda_n, - \lambda_{i_1}]$.

In general, the same principle is applicable. One can decompose $w$ into columns, and calculate the action of each column from the left to the right. More precisely,
\[ w = C_k \circ C_{k-1} \circ \ldots \circ C_1, \ \text{where } \ C_j = \sigma{c_{i_j+1-j,n+1-j}} \circ \ldots \circ \sigma{c_{n-j,n+1-j}} \circ \sigma{b_{n+1-j}}. \]
Applying this to $\lambda$ gives (\ref{equation:action_in_LS}).
\end{proof}

\subsection{Inductive structure of the regular Hasse diagram}
Denote by $W^n$ our regular Hasse diagram in the rank $n$. The vertices of $W^n$ can be divided into two disjoint sets: $W^n = W^n_0 \sqcup W^n_1$, where $W^n_d$ consists of those LS-words having the first digit $d$. Obviously, each $W^n_d \cong W^{n-1}$ and analogously decomposes further as $W^n_d = W^n_{d0} \sqcup W^n_{d1}$, where each $W^n_{de} \cong W^{n-2}$ as a directed graph. The only arrows between $W^n_0$ and $W^n_1$ are the following: $\LS{01\ldots} \to \LS{10\ldots}$, between $W^n_{01}$ and $W^n_{10}$.

In conclusion, $W^n$ consists of the two pieces $W^{n-1}$ that ``glue'' from the second copy of $W^{n-2}$ in the first piece to the first copy of $W^{n-2}$ in the second piece. See Figure \ref{figure:Hasse_1gr_n=4}.

\subsection{Description of the singular Hasse diagrams}

Take an integral weight $\lambda$ such that $\lambda + \rho$ is dominant and denote by $\Sigma$ its set of simple singular roots. Consider the $W^\mfrp$-orbit of $\lambda + \rho$ (which is the same as the affine $W^\mfrp$-orbit of $\lambda$, up to the shift of coordinates by $\rho$), and look for the elements that are strictly $\mfrp$-dominant. The results that are not strictly $\mfrp$-dominant do not correspond to a homogeneous vector bundle over $G/P$. The remaining part is what is called the \be{singular orbit} attached to the pair $(\mfrp,\Sigma)$ or $(\mfrp,\lambda)$. We can assume that $\Sigma$ does not contain two adjacent simple roots, because otherwise the corresponding block is empty. In the identification (\ref{equation:LS_objects}), we can recognize those LS words that belong to the singular Hasse diagram:
\begin{proposition}
\label{proposition:singular_orbit_via_LS}
Suppose $\Sigma = \{ \alpha_{i_1}, \ldots, \alpha_{i_s} \}$. Denote by $\alpha_n$ the long simple root. If $\alpha_n \not\in \Sigma$, then $ W^{\mfrp,\Sigma} = \left\{\LS{d_1d_2\ldots d_n} \ \colon \ \LS{d_{i_k} d_{i_k +1}} = \LS{01} \ \text{ for } k=1,\ldots,s \right\}$. Otherwise $\alpha_n = \alpha_{i_s}$, and $W^{\mfrp,\Sigma} = \left\{\LS{d_1d_2\ldots d_{n-1} 0} \ \colon \ \LS{d_{i_k} d_{i_k +1}} = \LS{01} \ \text{ for } k=1,\ldots,{s-1} \right\}$.
\end{proposition}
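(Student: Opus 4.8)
The plan is to work directly with the characterization of $W^{\mfrp,\Sigma}$ from Proposition \ref{proposition:singular_Hasse}, namely that $w \in W^{\mfrp,\Sigma}$ iff $w \in W^\mfrp$ and $w$ is the minimal-length representative of the coset $wW_\Sigma$, which is equivalent to $l(w\sigma_\alpha) > l(w)$ for all $\alpha \in \Sigma$; and then to translate the condition $l(w\sigma_{\alpha_i}) > l(w)$ into a statement about the LS word $\overline{d_1\ldots d_n}$. First I would recall the standard fact (for any Weyl group) that for a simple reflection $\sigma_{\alpha_i}$, we have $l(w\sigma_{\alpha_i}) > l(w)$ iff $w\alpha_i > 0$ (as a positive root). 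So the whole proposition reduces to computing, for $w \in W^\mfrp$ written in LS notation, the sign of $w\alpha_i$ for each $i$, and checking when this is forced to be positive.

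For the short simple roots $\alpha_i = \epsilon_i - \epsilon_{i+1}$ with $i \leq n-1$, I would use Proposition with equation (\ref{equation:action_in_LS}) to track where the coordinates $\epsilon_i$ and $\epsilon_{i+1}$ land under $w$. The key point is that the positions $i_1 < \dots < i_k$ of the digit $1$ in $w$ are precisely the coordinates that get negated and moved to the tail; the positions of the digit $0$ keep their value $\lambda_j$ (with sign) in their relative order among the zeros. Writing $w^{-1}$ is cleaner here: $w\alpha_i = w\epsilon_i - w\epsilon_{i+1}$, and since $w$ permutes-with-signs the $\epsilon$'s, I need the relative order in which $\epsilon_i$ and $\epsilon_{i+1}$ appear in the $n$-tuple $w[1,2,\ldots,n]$ of (signed) positions. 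The four cases $\overline{d_id_{i+1}} \in \{00,01,10,11\}$ give: $00$ — both $\epsilon_i,\epsilon_{i+1}$ stay in the front block, in order, so $w\alpha_i > 0$; $11$ — both go to the tail, and because the tail is $-\lambda_{i_k},\ldots,-\lambda_{i_1}$ in \emph{reversed} order, $\epsilon_i$ (the earlier original index) ends up \emph{after} $\epsilon_{i+1}$, and the sign is negative on both, so one checks $w\alpha_i > 0$ again; $10$ — $\epsilon_i$ goes to the tail (with a minus sign) and $\epsilon_{i+1}$ stays in front (positive), so $w\epsilon_i$ is a negated coordinate sitting after all the positive front ones, giving $w\alpha_i = (\text{something in the tail}) - (\text{something in front}) < 0$, i.e. $l(w\sigma_{\alpha_i}) < l(w)$; and $01$ — $\epsilon_i$ stays in front, $\epsilon_{i+1}$ goes to the tail negated, so $w\alpha_i = \epsilon_{i \text{'s place}} - (\text{tail place}) > 0$. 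Thus among the short roots, $l(w\sigma_{\alpha_i}) > l(w)$ holds iff $\overline{d_id_{i+1}} \neq \overline{10}$. Combined with the requirement $w\sigma_{\alpha_i} \in W^\mfrp$ (automatic once the length goes up, by general position theory / the fact that $W^\mfrp$ is closed upward under the relevant moves, or by Proposition \ref{proposition:hasse_bijection_saturated}), and noting that $\Sigma$ has no two adjacent roots so the constraints are independent, one gets: the constraint at $\alpha_{i_k}$ is $\overline{d_{i_k}d_{i_k+1}} \neq \overline{10}$. But we must be slightly more careful: the proposition asserts the stronger-looking condition $\overline{d_{i_k}d_{i_k+1}} = \overline{01}$, not merely $\neq \overline{10}$. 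The resolution is that $w\sigma_{\alpha_i} \in W^\mfrp$ \emph{fails} exactly in the cases $\overline{d_id_{i+1}} \in \{00,11\}$ once we also demand $w < w\sigma_{\alpha_i}$ gives a valid Hasse element — actually the cleanest route is: $w \in W^{\mfrp,\Sigma}$ requires $w\sigma_{\alpha_i} \in W^\mfrp$ too, and $\sigma_{\alpha_i}$ acts on the generalized Young diagram / LS word of $w$; one checks that $w\sigma_{\alpha_i} \in W^\mfrp$ together with $l(w\sigma_{\alpha_i}) = l(w)+1$ happens precisely when $\overline{d_id_{i+1}} = \overline{01}$ (the other non-$\overline{10}$ cases either leave $W^\mfrp$ or don't increase length correctly), which I would verify via equation (\ref{equation:LS_arrows}) and the observation that the only inter-block arrow is $\overline{01\ldots}\to\overline{10\ldots}$.

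For the long root $\alpha_n = 2\epsilon_n = b_n$, I would treat it separately: $\sigma_{\alpha_n}$ flips the sign of the last coordinate. Using (\ref{equation:action_in_LS}), the last digit $d_n$ records whether $\epsilon_n$ is negated by $w$. If $d_n = 0$ then $\epsilon_n$ is untouched and sits as the last positive front-coordinate; one computes $w\alpha_n = 2 w\epsilon_n > 0$, but then also $w\sigma_{\alpha_n}$ has $\epsilon_n$ negated and in the tail, which \emph{is} still in $W^\mfrp$ (it corresponds to adding a box to the bottom of the first column) — wait, this needs care, and indeed this is exactly why when $\alpha_n \in \Sigma$ the answer is $\overline{d_1\ldots d_{n-1}0}$: we need $d_n = 0$ for $l(w\sigma_{\alpha_n}) > l(w)$ and $w\sigma_{\alpha_n} \in W^\mfrp$, and there is no "$\overline{01}$-type" pattern to impose since $\alpha_n$ is the rightmost node with nothing to its right. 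So the condition at the long root is simply $d_n = 0$, and the remaining short-root conditions are as before for $\alpha_{i_1},\ldots,\alpha_{i_{s-1}}$. When $\alpha_n \notin \Sigma$ there is no last-digit constraint and all $s$ simple roots are short, giving the first formula.

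The main obstacle I anticipate is pinning down the $W^\mfrp$-membership of $w\sigma_{\alpha_i}$ simultaneously with the length condition, so as to get the clean pattern $\overline{d_{i_k}d_{i_k+1}} = \overline{01}$ rather than a weaker inequality — in particular correctly handling the boundary/inter-block cases and making sure the hypothesis that $\Sigma$ contains no two adjacent roots is used to decouple the conditions. A secondary, purely bookkeeping hazard is sign- and order-tracking in the tail $-\lambda_{i_k},\ldots,-\lambda_{i_1}$ of (\ref{equation:action_in_LS}) when determining the sign of $w\alpha_i$; I would organize this by always comparing \emph{positions in the output $n$-tuple} of $\epsilon_i$ versus $\epsilon_{i+1}$ (or of $\epsilon_n$ and its negative), which reduces every case to inspecting the two digits $d_i, d_{i+1}$ (resp. $d_n$).
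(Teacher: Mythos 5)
Your proposal is correct in substance, but it takes a genuinely different route from the paper's. The paper never analyzes the sign of $w\alpha_i$ or right multiplication by $\sigma_{\alpha_i}$ directly: it applies $w$ to the singular weight $\lambda+\rho$ itself, whose coordinates are strictly decreasing except for the equal pairs at positions $(i_k,i_k+1)$, and observes that $w(\lambda+\rho)$ is strictly $\mfrp$-dominant precisely when exactly one member of each equal pair becomes negative, which by (\ref{equation:action_in_LS}) means $\LS{d_{i_k}d_{i_k+1}}\in\{\LS{01},\LS{10}\}$; this kills $\LS{00}$ and $\LS{11}$ in one stroke, and Proposition \ref{proposition:singular_Hasse} (minimal length in the coset $wW_\Sigma$) then selects $\LS{01}$ over $\LS{10}$. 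In the second kind the last coordinate of $\lambda+\rho$ is $0$, so $d_n=0$ and $d_n=1$ give coset-equivalent elements and the shorter has $d_n=0$. Your argument instead unwinds the definition of $W^{\mfrp,\Sigma}$ through the sign of $w\alpha_i$, and your four-case computation is right: with $a<b$ denoting the output positions, $w\alpha_i$ equals $a_{ab}$, $c_{ab}$, $-c_{ab}$, $a_{ab}$ in the cases $\LS{00}$, $\LS{01}$, $\LS{10}$, $\LS{11}$ respectively, and $w\alpha_n=\pm b_a$ according to $d_n=0$ or $1$. One warning: your parenthetical claim that $w\sigma_{\alpha_i}\in W^\mfrp$ is ``automatic once the length goes up'' is false, and you are right to retract it — in the cases $\LS{00}$ and $\LS{11}$ the length does go up (since $w\alpha_i=a_{ab}>0$), yet $\Phi_{w\sigma_{\alpha_i}}=\Phi_w\cup\{w\alpha_i\}$ then contains the $\mfrl$-root $a_{ab}\notin\Delta(\mfru)$, so $w\sigma_{\alpha_i}\notin W^\mfrp$. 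That membership check is exactly what upgrades the condition from $\neq\LS{10}$ to $=\LS{01}$, so it must appear explicitly in the final write-up; your sign computation already supplies it, because in the surviving cases $w\alpha_i$ is $c_{ab}$ (resp.\ $b_a$ for the long root with $d_n=0$), which does lie in $\Delta(\mfru)$. The paper's route is shorter here, since $\mfrp$-dominance of $w(\lambda+\rho)$ handles all four cases at once; yours has the advantage of being independent of the particular singular weight, using only the positions of the singular simple roots and the combinatorics of the LS words.
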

\begin{proof}
Assume first that $\alpha_n \not\in \Sigma$. It is easy to see that the coordinates of $\lambda + \rho$ are strictly decreasing, except on the positions $(i_k,i_k+1)$, where they have an equal value depending on $k$, for $k=1, \ldots, s$. The necessary and sufficient condition for $w = \LS{d_1 \ldots d_n} \in W^\mfrp$ to make $\lambda + \rho$ strictly $\mfrp$-dominant is that for each pair of the adjacent coordinates $(i_k,i_k+1)$, exactly one of them becomes negative. By the formula (\ref{equation:action_in_LS}), this is equivalent to $\LS{d_{i_k} d_{i_k +1}} = \LS{01}$ or $\LS{10}$. From Proposition \ref{proposition:singular_Hasse} we know that $W^{\mfrp,\Sigma}$ consists of minimal length representatives of the left cosets $v W_\Sigma \subseteq W^\mfrp$, where $W_\Sigma = \Stab_{W_\mfrg}(\lambda)$ and $v \in W^\mfrp$. So, we must have $\LS{d_{i_k} d_{i_k +1}} = \LS{01}$ for all $k$.

If $\alpha_n = \alpha_{i_s}$, then also in addition to the previous conditions, the last coordinate of $\lambda + \rho$ is $0$. So, both $\LS{d_1d_2\ldots d_{n-1} 0}$ and $\LS{d_1d_2\ldots d_{n-1} 1}$ are in the same left coset of $W_\Sigma$, and for $W^{\mfrp,\Sigma}$ we choose the shorter one, which is $\LS{d_1d_2\ldots d_{n-1} 0}$.
\end{proof}

Two different cases, depending on whether $\alpha_n \not\in \Sigma$ or $\alpha_n \in \Sigma$, will be referred to as the \be{singularity of the first kind}, and the \be{singularity of the second kind}, respectively. The construction of non-standard operators will be more complicated for the singularity of the second kind. In Figure \ref{figure:singular_orbit_r=4_3}, we give an example of a singular orbit of each kind in rank $4$.
\begin{figure}[h]
\[ \scalebox{.74}{\xymatrix@=.6em{
\times \ar@{.}[r] & [3,2,1,-1 \pbar] \ar@{=}[d] \\
& [3,2,1,-1 \pbar] \ar@{.}[r] \ar@{.}[d] & \times  \ar@{.}[d] \\
& \times \ar@{.}[r] \ar@{.}[dl] & [3,1,-1,-2 \pbar] \ar@{=}[d] \ar[dl] \\
\times \ar@{.}[r] & [2,1,-1,-3 \pbar] \ar@{=}[d] & [3,1,-1,-2 \pbar] \ar[dl] \ar@{.}[r] & \times \ar@{.}[dl] \\
& [2,1,-1,-3 \pbar] \ar@{.}[r] \ar@{.}[d]& \times \ar@{.}[d] \\
& \times \ar@{.}[r] & [1,-1,-2,-3 \pbar] \ar@{=}[d] \\
& & [1,-1,-2,-3 \pbar] \ar@{.}[r] & \times } \ \xymatrix@=.6em{
[3,2,1,0 \pbar] \ar@{=}[r] & [3,2,1,0 \pbar] \ar@{.}[d] \\
& [3,2,0,-1 \pbar] \ar@{=}[r] \ar[d] & [3,2,0,-1 \pbar] \ar[d] \\
& [3,1,0,-2 \pbar] \ar@{=}[r] \ar[dl] & [3,1,0,-2 \pbar] \ar[dl] \ar@{.}[d] \\
[2,1,0,-3 \pbar] \ar@{=}[r] & [2,1,0,-3 \pbar] \ar@{.}[d] & [3,0,-1,-2 \pbar] \ar@{=}[r] \ar[dl] & [3,0,-1,-2 \pbar] \ar[dl] \\
& [2,0,-1,-3 \pbar] \ar@{=}[r] \ar[d] & [2,0,-1,-3 \pbar] \ar[d] \\
& [1,0,-2,-3 \pbar] \ar@{=}[r] & [1,0,-2,-3 \pbar] \ar@{.}[d] \\
& & [0,-1,-2,-3 \pbar] \ar@{=}[r] & [0,-1,-2,-3 \pbar] }} \]
\caption{Singular orbit for $[3,2,1,1]$ and $[3,2,1,0]$} \label{figure:singular_orbit_r=4_3}
\end{figure}
In these orbits there are some non-standard operators, which are not visible. These missing operators will be constructed using the Penrose transform from an appropriately chosen twistor space. Moreover, they will be (together with the standard operators) the differentials in the singular BGG complex.

\section{Construction of non-standard operators}

From now on, we will work with a weight $\lambda$ such that $\lambda + \rho$ is orthogonal to only one simple root. In that case, we say that $\lambda + \rho$ is \be{semi-regular}. So, $\Sigma = \{\alpha_k\}$ for some $k \leq n$. Minimal such $\lambda + \rho$ is
\begin{equation}
\label{equation:lambda_first_kind}
\lambda + \rho = [n-1,n-2, \ldots, n-k+1, \underbrace{\mathbf{n-k}}_{k},\underbrace{\mathbf{n-k}}_{k+1}, n-k-1, \ldots ,2,1]
\end{equation}
for the singularity of the first kind ($k<n$), or $\lambda + \rho = [n-1,n-2, \ldots, 2,1,\mathbf{0}]$ for the singularity of the second kind ($k=n$). We will work with this minimal $\lambda+\rho$, but we want to note that in the construction of the non-standard operators that follows, the minimality is not important, only the order among the coordinates of $\lambda+\rho$ plays a role. Equivalently, one can apply the Jantzen-Zuckerman translation functors to obtain the non-minimal cases. Of course, for a non-minimal $\lambda + \rho$, the orders of the constructed operators will increase (see Remark \ref{remark:order=gen_conf_weigt}).

\subsection{Double fibration}

Fix $k \in \{1,\ldots,n\}$, and form the following double fibration:
\begin{equation}
\label{equation:double_fibration_1_1n_n}
\xymatrix@C=-4em@R=1em{ & G/Q = \begin{dynkin}
\dynkinline{1}{0}{2}{0}
\dynkindots{2}{0}{3}{0}
\dynkinline{3}{0}{4}{0}
\dynkindoubleline{4}{0}{5}{0}
\dynkincross{1}{0}
\dynkindot{2}{0}
\dynkindot{3}{0}
\dynkindot{4}{0}
\dynkincross{5}{0}
\end{dynkin} \ar[dl]_{\eta} \ar[dr]^{\tau}& \\
G/R = \begin{dynkin}
\dynkinline{1}{0}{2}{0}
\dynkindots{2}{0}{3}{0}
\dynkinline{3}{0}{4}{0}
\dynkindoubleline{4}{0}{5}{0}
\dynkindot{5}{0}
\dynkindot{2}{0}
\dynkindot{3}{0}
\dynkindot{4}{0}
\dynkincross{1}{0}
\end{dynkin} & & G/P = \begin{dynkin}
\dynkinline{1}{0}{2}{0}
\dynkindots{2}{0}{3}{0}
\dynkinline{3}{0}{4}{0}
\dynkindoubleline{4}{0}{5}{0}
\dynkindot{1}{0}
\dynkindot{2}{0}
\dynkindot{3}{0}
\dynkindot{4}{0}
\dynkincross{5}{0}
\end{dynkin} . }
\end{equation}
We start with the homogeneous sheaf $\mcaO_\mfrr(\tilde{\lambda})$ on $G/R$, where
\begin{equation}
\label{equation:lambda_tilda}
\tilde{\lambda} + \rho = [n-k \pbar n-1, n-2, \ldots, 2,1] .
\end{equation}
The weight $\tilde{\lambda} = [-k \pbar 0,0, \ldots, 0,0]$ is obviously $\mfrr$-dominant, so the sheaf $\mcaO_\mfrr(\tilde{\lambda})$ is indeed well defined. Recall that $G/P$ can be realized as the Lagrangian Grassmannian $\iGr(n,2n)$, $G/R$ as the isotropic Grassmannian $\iGr(1,2n)$ (biholomorphic to $\mbbP^{2n-1}$), and $G/Q$ as the space of isotropic flags of the type $(1,n)$. More precisely, the double fibration (\ref{equation:double_fibration_1_1n_n}) becomes:
\[ \xymatrix@C=-11em@R=1em{ & \{ (L,W) \colon \dim L=1, \ \dim W =n, \ L \leq W \text{ isotropic} \} \ar[dl]_{\eta} \ar[dr]^{\tau}& \\
\{ L \colon \dim L=1 \} \phantom{ \ L \text{ isotropic}} & & \{ W \colon \dim W =n, \ W \text{ isotropic} \}, } \]
where $\eta$ and $\tau$ are the projections to the components. Take $X \subseteq G/P$ to be the big affine cell. It consists of subspaces spanned by the columns of the matrix $\begin{pmatrix} I \\ C \end{pmatrix}$ (w.r.t a fixed symplectic basis), where $I$ is the identity $n \times n$ matrix, and $C$ a symmetric $n \times n$ matrix. The symmetric matrices $\Sym_n(\mbbC) \cong \mfru^-$ give the cannonical affine coordinates on $X$. Put $Y:=\tau^{-1}(X)$ and $Z:=\eta(Y)$. A general element in the fiber $\tau^{-1}(W)$, $W \in X$, is a pair $(L,W)$, where any non-zero vector in $L$ is a linear combination of the columns of $\begin{pmatrix} I \\ C \end{pmatrix}$. The coefficients in this linear combination are uniquely determined by $L$ up to a non-zero scalar, so they define a point in the projective space $\mbbP^{n-1}$. It follows that we have a biholomorphic bijection $\Sym_n(\mbbC) \times \mbbP^{n-1} \cong Y$ given by $(C, y) \mapsto \left( \begin{pmatrix} y \\ C \cdot y \end{pmatrix}, \begin{pmatrix} I \\ C \end{pmatrix} \right)$, where $y=\begin{pmatrix} y_1 \\ \vdots \\ y_n \end{pmatrix}$ are the projective coordinates. Now, the restricted double fibration is:
\begin{equation}
\label{equation:restricted_double_fibration}
\xymatrix@=.7em{ & Y \ar[dl]_{\eta} \ar[dr]^{\tau}& \\
Z & & X ,} \qquad \xymatrix@=.7em{ & (C, y) \ar@{|->}[dl]_-{\eta} \ar@{|->}[dr]^{\tau}& \\
{\begin{pmatrix} y \\ C \cdot y \end{pmatrix}} & & C .}
\end{equation}
\begin{proposition}
\label{proposition:twistor_space_in_coordinates}
We have $Z=\left\{ \begin{pmatrix} y_1 \\ \vdots \\ y_n \\ z_1 \\ \vdots \\ z_n \end{pmatrix} \in \mbbP^{2n-1} \ \colon \ \text{at least one } y_i \neq 0 \ \right\}$.
\end{proposition}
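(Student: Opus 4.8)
The plan is to compute the image $Z = \eta(Y)$ directly in the affine/projective coordinates furnished by the restricted double fibration (\ref{equation:restricted_double_fibration}). By construction $Y \cong \Sym_n(\mbbC) \times \mbbP^{n-1}$, with a point $(C,y)$ mapping under $\eta$ to the line $L = \langle \binom{y}{Cy}\rangle \in \mbbP^{2n-1}$. So I first observe that $Z$ is contained in the set described on the right-hand side: if $\binom{y}{z} \in Z$, then $z = Cy$ for some symmetric $C$, and since $y \neq 0$ in $\mbbP^{n-1}$ at least one coordinate $y_i$ is non-zero, which is exactly the stated condition.

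Conversely, I would show that every point $\binom{y}{z}$ of $\mbbP^{2n-1}$ with some $y_i \neq 0$ lies in the image. The key step is an elementary linear-algebra fact: given vectors $y, z \in \mbbC^n$ with $y \neq 0$, there exists a symmetric matrix $C$ with $Cy = z$. One constructs such a $C$ explicitly, e.g. after rescaling so that $\langle y, y\rangle \neq 0$ is not available over $\mbbC$ in general, so instead I would argue directly: writing $C = \frac{1}{y_i}(z e_i^{\mathsf T} + e_i z^{\mathsf T}) - \frac{z_i}{y_i^2} e_i e_i^{\mathsf T}$ for an index $i$ with $y_i \neq 0$ gives a symmetric matrix, and a direct check shows $Cy = z$ (the three terms contribute $z \cdot 1 + e_i \cdot \frac{\langle z, y\rangle}{y_i}\cdot\frac{1}{1} - e_i \cdot \frac{z_i}{y_i}$, wait — I will instead verify coordinate-wise that this $C$ sends $y$ to $z$). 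Then $(C, y) \in Y$ and $\eta(C,y) = \binom{y}{Cy} = \binom{y}{z}$, so the point is in $Z$. One should also check the construction is well-posed in $\mbbP^{2n-1}$, i.e. independent of the chosen representatives up to scaling, which is immediate since scaling $\binom{y}{z}$ scales $C$ correspondingly and leaves the projective point fixed.

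I expect the main (and only) obstacle to be packaging the explicit symmetric completion $Cy = z$ cleanly; this is a routine exercise but one must be slightly careful that the matrix is genuinely symmetric and that the degenerate coordinates (many $y_i = 0$) cause no trouble — picking a single non-vanishing pivot coordinate $y_i$ handles all cases uniformly. Everything else is formal: $Z$ is by definition the set-theoretic image of $\eta$ restricted to $Y$, and the two inclusions above establish the claimed equality.
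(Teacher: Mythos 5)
Your approach is the same as the paper's: necessity is immediate because $y$ gives projective coordinates on the fiber of $\tau$, and sufficiency amounts to an explicit symmetric completion $Cy=z$ after choosing a pivot index $i$ with $y_i\neq 0$ (the paper does exactly this with $i=1$, $y_1=1$). The one genuine problem is your displayed formula: with $C=\frac{1}{y_i}\bigl(ze_i^{\mathsf T}+e_iz^{\mathsf T}\bigr)-\frac{z_i}{y_i^2}e_ie_i^{\mathsf T}$ one computes $Cy=z+\frac{z^{\mathsf T}y-z_i}{y_i}\,e_i$, which is not $z$ unless $z^{\mathsf T}y=z_i$. The correction term should be $\frac{z^{\mathsf T}y}{y_i^2}\,e_ie_i^{\mathsf T}$ rather than $\frac{z_i}{y_i^2}\,e_ie_i^{\mathsf T}$; then $Cy=z+\frac{z^{\mathsf T}y}{y_i}e_i-\frac{z^{\mathsf T}y}{y_i}e_i=z$, the matrix is still manifestly symmetric, and for $i=1$, $y_1=1$ it reproduces the matrix written out in the paper's proof. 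With that one-term fix (which you half-anticipated by proposing to verify coordinate-wise) your argument is complete and coincides with the paper's.
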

\begin{proof}
The condition in the curly brackets is necessary because $y$ are projective coordinates. For the converse, assume $y_1 =1$, and observe:
\[ \begin{pmatrix}[c|ccc] z_1- \sum_{i=2}^n z_i \cdot y_i & z_2 & \ldots & z_n \\ \hline
z_2 & 0 & \ldots & 0 \\
\vdots & \vdots &  & \vdots \\ 
z_n & 0 & \ldots & 0 \end{pmatrix} \begin{pmatrix} 1 \\ y_2 \\ \vdots \\ y_n \end{pmatrix} = \begin{pmatrix} z_1 \\ z_2 \\ \vdots \\ z_n \end{pmatrix}. \]
The proof is analogous if some other $y_i=1$.
\end{proof}

\begin{proposition}
The fibers of $\eta \colon Y \to Z$ are smoothly contractible.
\end{proposition}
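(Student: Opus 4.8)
The plan is to compute the fibers of $\eta \colon Y \to Z$ explicitly in the affine coordinates provided by the restricted double fibration (\ref{equation:restricted_double_fibration}), and show each is isomorphic to an affine space (hence smoothly contractible). Fix a point $\ell = [y_1 : \cdots : y_n : z_1 : \cdots : z_n] \in Z$, i.e. with some $y_i \neq 0$; by the symmetry argument used in Proposition \ref{proposition:twistor_space_in_coordinates} I may normalize, say, $y_1 = 1$. A point of $Y$ is a pair $(C, y) \in \Sym_n(\mbbC) \times \mbbP^{n-1}$, and it lies in the fiber $\eta^{-1}(\ell)$ exactly when $\begin{pmatrix} y \\ C \cdot y \end{pmatrix}$ represents $\ell$ in $\mbbP^{2n-1}$. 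With the normalization $y_1 = 1$ this forces the projective coordinates $y$ to be exactly $(1, y_2, \ldots, y_n)$ as given by $\ell$ — so $y$ is completely determined — and then the remaining condition is the linear equation $C \cdot y = z$, where $z = (z_1, \ldots, z_n)^T$ is the fixed data from $\ell$.

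So the first key step is: the fiber $\eta^{-1}(\ell)$ is identified with the affine subspace $\{ C \in \Sym_n(\mbbC) \colon C y = z \}$ of the vector space $\Sym_n(\mbbC)$. The second key step is to verify that this affine subspace is nonempty — which is precisely the content of the matrix identity written out in the proof of Proposition \ref{proposition:twistor_space_in_coordinates}: that identity exhibits an explicit symmetric $C$ with $Cy = z$. Since the solution set of a consistent system of linear equations on a finite-dimensional vector space is an affine subspace, it is isomorphic (as a complex manifold, a fortiori as a smooth manifold) to $\mbbC^m$ for the appropriate $m$, and $\mbbC^m$ is contractible. This gives smooth contractibility of every fiber.

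The only real subtlety — and the step I would be most careful about — is checking that this description is uniform across the open sets $\{y_i \neq 0\}$ covering $Z$, i.e. that I am genuinely describing fibers of a well-defined map and not running into a chart-dependence issue; but this is immediate because the condition ``$\begin{pmatrix} y \\ Cy \end{pmatrix}$ spans $\ell$'' is intrinsic, and on the overlap of two charts the two affine-space descriptions of the fiber agree. One should also note in passing why $y$ is pinned down: if $(C,y)$ and $(C',y')$ both map to $\ell$, then $y$ and $y'$ are proportional in $\mbbP^{n-1}$, hence equal as projective points, so the $\mbbP^{n-1}$-coordinate of a point in the fiber is constant and only the $\Sym_n$-coordinate varies. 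With these observations the proof reduces to the linear-algebra facts above, and no serious obstacle remains; the heavy lifting (solvability of $Cy=z$ with $C$ symmetric) has already been done in Proposition \ref{proposition:twistor_space_in_coordinates}.
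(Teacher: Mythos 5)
Your proposal is correct and follows essentially the same route as the paper: the fiber over a point of $Z$ is cut out by the linear condition $Cy=z$ inside $\Sym_n(\mbbC)\times\{y\}$, hence is a (nonempty, by Proposition \ref{proposition:twistor_space_in_coordinates}) affine subspace and therefore smoothly contractible. The extra care you take about nonemptiness and the pinning down of the projective coordinate is a reasonable elaboration of the same argument.
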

\begin{proof}
Given $\begin{pmatrix} y \\ z\end{pmatrix} \in Z$, the condition $C \cdot y = z$ is given by linear equations in the entries of the matrix $C$. So, the fiber $\eta^{-1}\begin{pmatrix} y \\ z\end{pmatrix} \subseteq \Sym_n(\mbbC) \times y$ is a certain affine subspace of $Y$, and therefore smoothly contractible.
\end{proof}

Suppose $X' \subseteq X$ is a convex open subset, and put $Y':=\tau^{-1}(X')$, $Z':=\eta(Y')$. In this new restricted double fibration, the fibers of $\eta \colon Y' \to Z'$ are equal to the intersection of an affine set (the fibers of $\eta$ in $Y$) and a convex set (a copy of $X'$ in $Y$), and are therefore also smoothly contractible. So, we have a valid setup for the Penrose transform locally, around any point in the Lagrangian Grassmannian.

\subsection{Relative Hasse diagrams}
To calculate the relative BGG resolution (\ref{equation:penrose_relativeBGG}) of the inverse image $\eta^{-1} \mcaO_\mfrr(\tilde{\lambda})$ on $G/Q$, we need the relative Hasse diagram $W_\mfrr^\mfrq$. The fiber of $\eta$ is $ R/Q \cong \begin{dynkin}
\dynkinline{1}{0}{2}{0}
\dynkindots{2}{0}{3}{0}
\dynkindoubleline{3}{0}{4}{0}
\dynkindot{1}{0}
\dynkindot{2}{0}
\dynkindot{3}{0}
\dynkincross{4}{0}
\dynkinlabel{1}{0}{below}{$\alpha_2$}
\dynkinlabel{4}{0}{below}{$\alpha_n$}
\end{dynkin}$. So, $W_\mfrr^\mfrq$ (and so the relative BGG resolution) has the same shape as the regular Hasse diagram in rank $n-1$. As a subset of $W_\mfrg$, $W_\mfrr^\mfrq$ operates on the last $n-1$ coordinates and ignores the first coordinate of a weight. Therefore, $W_\mfrr^\mfrq$ can be identified with the following subgraph of $W^\mfrp$:
\begin{equation}
\label{equation:left_relative_Hasse}
W_\mfrr^\mfrq = \left\{ w \in W^\mfrp \ \colon \ w=\LS{0 \, d_1 d_2\ldots d_{n-1}} \right\}.
\end{equation}

To apply Bott-Borel-Weil Theorem for calculating the higher direct images along $\tau$, it is convenient to understand the relative Hasse diagram $W_\mfrp^\mfrq$. The fiber of $\tau$ is $P/Q \cong \begin{dynkin}
\dynkinline{1}{0}{2}{0}
\dynkindots{2}{0}{3}{0}
\dynkinline{3}{0}{4}{0}
\dynkincross{1}{0}
\dynkindot{2}{0}
\dynkindot{3}{0}
\dynkindot{4}{0}
\dynkinlabel{1}{0}{below}{$\alpha_1$}
\dynkinlabel{4}{0}{below}{$\alpha_{n-1}$}
\end{dynkin} \cong \mbbP^{n-1}$, also $|1|$-graded, so by Proposition \ref{proposition:hasse_bijection_saturated}:
\begin{equation}
\label{equation:Hasse_Pn}
W_\mfrp^\mfrq = \{ \Id \stackrel{a_{12}}{\longrightarrow} \bullet \stackrel{a_{13}}{\longrightarrow} \bullet \stackrel{a_{14}}{\longrightarrow} \ldots \stackrel{a_{1n}}{\longrightarrow} \bullet \} \subseteq W_\mfrg.
\end{equation}

\begin{example}
\label{example:relative_BGG_3211}
Take $\lambda + \rho = [3,2,1,1]$, so $\tilde{\lambda} + \rho = [1 \pbar 3,2,1]$. The relative BGG resolution of the sheaf $\mcaO_\mfrr(\tilde{\lambda})$ on $G/Q$ is obtained by applying (\ref{equation:left_relative_Hasse}) to $\tilde{\lambda}+ \rho$:
\[ \xymatrix@=.75em{
\eta^{-1} \mcaO_\mfrr(\tilde{\lambda}) \ar[r] & [1 \pbar 3,2,1 \pbar ] \ar[r] & [1 \pbar 3,2,-1 \pbar ] \ar[d] \\
&& [1 \pbar 3,1,-2 \pbar ] \ar[r] \ar[d] & [1 \pbar 3,-1,-2 \pbar] \ar[d] \\
&& [1 \pbar 2,1,-3 \pbar ] \ar[r] & [1 \pbar 2,-1,-3 \pbar ] \ar[d] \\
&& & [1 \pbar 1,-2,-3 \pbar ] \ar[r] & [1 \pbar -1,-2,-3 \pbar ] \ar[r] & 0 . } \]
To calculate higher direct images, remove the first bar in each weight in the resolution. If a weight has two coordinates equal, it is $\mfrp$-singular, and so all its higher direct images are $0$. Otherwise, the surviving higher direct image is obtained by arranging the coordinates in the strictly decreasing order, and the degree is equal to the number of the transpositions of adjacent coordinates needed to move the first coordinate to its correct position (follows from (\ref{equation:Hasse_Pn})). We organize this information on the first page of the spectral sequence (\ref{equation:penrose_spectral}), which in this example is the following:
\[ \spectralsequence{}{
0 & [3,2,1,-1 \pbar] & 0 & 0 & 0 & 0 & 0 \\
0 & 0 & 0 & [3,1,-1,-2 \pbar] \ar[r] & [2,1,-1,-3 \pbar] & 0 & 0 \\
0 & 0 & 0 & 0 & 0 & 0 & [1,-1,-2,-3 \pbar] . } \]
Compare this to Figure \ref{figure:singular_orbit_r=4_3}. Two non-standard operators to be constructed in this case are $[3,2,1,-1 \pbar] \to [3,1,-1,-2 \pbar]$ and $[2,1,-1,-3 \pbar] \to [1,-1,-2,-3 \pbar]$. Note that the objects in the spectral sequence are not really homogeneous sheaves, but rather their sections over $X$. We will omit $\Gamma(X,-)$ from the notation, and write only sheaves, or the defining ($\rho$-shifted) weights, or their LS codes. Also note that a standard operator between two adjacent objects in the relative BGG resolution survives the higher direct image and appears in the spectral sequence as a standard operator, only if both these adjacent objects survive in the same degree. This follows from the functoriality of the direct images.
\end{example}

\begin{example}
Take $\lambda + \rho = [3,2,1,0]$, so $\tilde{\lambda} + \rho = [0 \pbar 3,2,1]$. This is the singularity of the second kind. The relative BGG resolution is the same as in Example \ref{example:relative_BGG_3211}, except that instead of $1$ there is $0$ before the first bar.
Note that now every weight survives a higher direct image. This is typical for the singularity of the second kind. The first page of the spectral sequence (\ref{equation:penrose_spectral}) is:
\[ \scalebox{.69}{\spectralsequence{}{
[3,2,1,0 \pbar] & 0 & 0 & 0 & 0 & 0 & 0 \\
0 & [3,2,0,-1 \pbar] \ar[r] & [3,1,0,-2 \pbar] \ar[r] & [2,1,0,-3 \pbar] & 0 & 0 & 0 \\
0 & 0 & 0 & [3,0,-1,-2 \pbar] \ar[r] & [2,0,-1,-3 \pbar] \ar[r] & [1,0,-2,-3 \pbar] & 0 \\
0 & 0 & 0 & 0 & 0 & 0 & [0,-1,-2,-3 \pbar]. }} \]
Compare this to Figure \ref{figure:singular_orbit_r=4_3}. Two non-standard operators will be construted here: $[3,2,1,0 \pbar] \to [3,0,-1,-2 \pbar]$ and $[2,1,0,-3 \pbar] \to [0,-1,-2,-3 \pbar]$. Namely, the Enright-Shelton equivalence says that this orbit should decompose into two disjoint blocks with respect to the parity, each of the shape $\bullet \to \bullet \to \bullet \to \bullet$.
\end{example}

\begin{example}
In Figure \ref{figure:relative_BGGs_rank_5} we give the degrees of the surviving higher direct images in rank $5$ in all semi-regular cases, from $\Sigma=\{ \alpha_1 \}$ to $\Sigma=\{ \alpha_5 \}$, respectively. The non-standard operators to be constructed are presented with dashed arrows.
\begin{figure}[h]
\[ \xymatrix@R=1em@C=.6em{
\times \ar@{.}[r] & \times \ar@{.}[d] \\
& \times \ar@{.}[r] \ar@{.}[d] & \times \ar@{.}[d] \\
& \times \ar@{.}[r] \ar@{.}[dl] & \times \ar@{.}[dl] \ar@{.}[d] \\
0 \ar[r] & 0 \ar[d] & \times \ar@{.}[r] \ar@{.}[dl] & \times \ar@{.}[dl] \\
& 0 \ar[r] \ar[d] & 0 \ar[d] \\
& 0 \ar[r] & 0 \ar[d] \\
& & 0 \ar[r] & 0 } \ \xymatrix@R=1em@C=.6em{
\times \ar@{.}[r] & \times \ar@{.}[d] \\
& \times \ar@{.}[r] \ar@{.}[d] & \times \ar@{.}[d] \\
& 1 \ar[r] \ar@{.}[dl] & 1 \ar@{.}[dl] \ar[d] \\
\times \ar@{.}[r] & \times \ar@{.}[d] & 1 \ar[r] \ar@{.}[dl] \ar@{-->}[ldd] & 1 \ar@{.}[dl] \ar@{-->}[ldd] \\
& \times \ar@{.}[r] \ar@{.}[d] & \times \ar@{.}[d] \\
& 0 \ar[r] & 0 \ar[d] \\
& & 0 \ar[r] & 0 } \ \xymatrix@R=1em@C=.6em{
\times \ar@{.}[r] & \times \ar@{.}[d] \\
& 2 \ar[r] \ar@{.}[d] & 2 \ar@{.}[d] \ar@<1ex>@{-->}[dd] \\
& \times \ar@{.}[r] \ar@{.}[dl] & \times \ar@{.}[dl] \ar@{.}[d] \\
\times \ar@{.}[r] & \times \ar@{.}[d] & 1 \ar[r] \ar[dl] & 1 \ar[dl] \\
& 1 \ar[r] \ar@{.}[d] & 1 \ar@{.}[d] \ar@<1ex>@{-->}[dd] \\
& \times \ar@{.}[r] & \times \ar@{.}[d] \\
& & 0 \ar[r] & 0 } \ \xymatrix@R=1em@C=.6em{
\times \ar@{.}[r] & 3 \ar@{.}[d] \ar@{-->}[rd] \\
& \times \ar@{.}[r] \ar@{.}[d] & 2 \ar[d] \\
& \times \ar@{.}[r] \ar@{.}[dl] & 2 \ar[dl] \ar@{.}[d] \ar@{-->}[rd] \\
\times \ar@{.}[r] & 2 \ar@{.}[d] \ar@{-->}[rd] & \times \ar@{.}[r] \ar@{.}[dl] & 1 \ar[dl] \\
& \times \ar@{.}[r] \ar@{.}[d] & 1 \ar[d] \\
& \times \ar@{.}[r] & 1 \ar@{.}[d] \ar@{-->}[rd] \\
& & \times \ar@{.}[r] & 0 } \ \xymatrix@R=1.1em@C=.6em{
\color{red}4 \ar@{.}[r] \ar@{-->}@[red][rdr] & \color{blue}3 \ar@[blue][d]  \\
& \color{blue}3 \ar@{.}[r] \ar@[blue][d] & \color{red}2 \ar@[red][d] \\
& \color{blue}3 \ar@{.}[r] \ar@[blue][dl] \ar@{-->}@[blue][rdr] & \color{red}2 \ar@[red][dl] \ar@[red][d] \\
\color{blue}3 \ar@{.}[r] \ar@{-->}@[blue][rdr] &\color{red} 2 \ar@[red][d] & \color{red}2 \ar@{.}[r] \ar@[red][dl] & \color{blue}1 \ar@[blue][dl] \\
& \color{red}2 \ar@{.}[r] \ar@[red][d] & \color{blue}1 \ar@[blue][d] \\
& \color{red}2 \ar@{.}[r] \ar@{-->}@[red][rdr] & \color{blue}1 \ar@[blue][d] \\
& & \color{blue}1 \ar@{.}[r] & \color{red}0 } \]
\caption{Degrees of higher direct images in rank $5$}
\label{figure:relative_BGGs_rank_5}
\end{figure}
\end{example}

The main technical difference between the two kinds of singularities is the following: In the first kind, all the non-standard operators to be constructed go across zero columns in the first page of the spectral sequence; in the second kind, the wanted non-standard operators go across columns with non-zero entries. We will deal with the two kinds separately.

\subsection{First kind}
Suppose $\lambda+\rho$ is orthogonal to only one short simple root. We work with the minimal such $\lambda+\rho$, given in (\ref{equation:lambda_first_kind}), so $\tilde{\lambda}$ is as in (\ref{equation:lambda_tilda}) for $k<n$.
\begin{proposition}
\label{proposition:relative_BGG_first}
In case of singularity of the first kind, the objects in the relative BGG resolution that survive a higher direct image are parametrized by the LS words of the form $\LS{0 \, d_1 \ldots d_{k-1} \, \mathbf{1} \, d_{k+1} \ldots d_{n-1}}$. The surviving degree is equal to the number of the digits $0$ among $d_1, \ldots ,d_{k-1}$. The result in this degree corresponds to $\LS{d_1 \ldots d_{k-1} \, \mathbf{01} \, d_{k+1} \ldots d_{n-1}} \in W^{\mfrp,\Sigma}$.

Moreover, the first page of the spectral sequence (\ref{equation:penrose_spectral}) agrees with the singular orbit, including both the objects and the standard operators.
\end{proposition}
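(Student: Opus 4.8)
The plan is to run the Bott--Borel--Weil machinery for the fibration $\tau$ term by term over the relative BGG resolution, exactly as in Example~\ref{example:relative_BGG_3211}, working throughout in the LS notation through formula~(\ref{equation:action_in_LS}). By~(\ref{equation:left_relative_Hasse}) a term of that resolution is indexed by $w=\LS{0\,d_1\ldots d_{n-1}}\in W_\mfrr^\mfrq$, and applying~(\ref{equation:action_in_LS}) to $\tilde\lambda+\rho=[n-k\pbar n-1,\ldots,1]$ --- noting that the leading digit $0$ leaves the first coordinate $n-k$ in place --- shows the corresponding $\mfrq$-weight to be $[\,n-k\pbar\mu_1,\ldots,\mu_{n-1}\,]$, where $\mu_1>\cdots>\mu_{n-1}$ consists of the coordinates of $[n-1,\ldots,1]$ not removed by $w$, in decreasing order, followed by the negatives of the removed ones (the whole string being automatically strictly decreasing, and in particular the $\mu_i$ pairwise distinct). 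To compute $\tau^q_\ast$ I delete the first bar and invoke BBW along the $\mbbP^{n-1}$-fibre $P/Q$ of $\tau$, with Hasse diagram~(\ref{equation:Hasse_Pn}): the $\mfrgl(n)$-weight $[n-k,\mu_1,\ldots,\mu_{n-1}]$ is $\mfrp$-singular precisely when $n-k$ equals some $\mu_i$, and otherwise it has a unique non-zero higher direct image, sitting in degree $j=\#\{i:\mu_i>n-k\}$ and equal to $\mcaO_\mfrp$ of the decreasing rearrangement of $[n-k,\mu_1,\ldots,\mu_{n-1}]$.

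The two numerical claims reduce to a count. The positive numbers among $\mu_1,\ldots,\mu_{n-1}$ are exactly $\{\,n-p+1 : 2\le p\le n,\ d_{p-1}=0\,\}$; hence $n-k$ repeats in $[n-k,\mu_1,\ldots,\mu_{n-1}]$ iff $n-k=n-p+1$ for some $p$ with $d_{p-1}=0$, that is, iff $d_k=0$. So the surviving terms are precisely those with $d_k=1$. For such a term $n-k>0$ permits $\mu_i>n-k$ only for positive $\mu_i$, and $n-p+1>n-k\iff p\le k$; therefore $j$ is the number of $p\in\{2,\ldots,k\}$ with $d_{p-1}=0$, i.e. the number of zeros among $d_1,\ldots,d_{k-1}$.

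To identify the output I would apply~(\ref{equation:action_in_LS}) directly to $\lambda+\rho$ from~(\ref{equation:lambda_first_kind}): a short bookkeeping of which coordinates of $\lambda+\rho$ get negated shows that $w(\tilde\lambda+\rho)$ and $v\cdot(\lambda+\rho)$ carry the same multiset of coordinates, where $v=\LS{d_1\ldots d_{k-1}\,\mathbf{01}\,d_{k+1}\ldots d_{n-1}}$ --- equivalently, $v$ arises from the length-$(n-1)$ LS word of $w$ by replacing its forced digit $1$ in position $k$ with the pair $\LS{01}$. Since $v\cdot(\lambda+\rho)$ is $\mfrp$-dominant with distinct entries, it is the decreasing rearrangement of that multiset, so the output is $\mcaO_\mfrp(v\cdot\lambda)$; and as $v$ has digits $0,1$ in positions $k,k+1$, Proposition~\ref{proposition:singular_orbit_via_LS} gives $v\in W^{\mfrp,\Sigma}$. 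The map $w\mapsto v$ is a bijection from the surviving terms onto $W^{\mfrp,\Sigma}$ (both sides being parametrised by the $n-2$ free digits, and distinct surviving $w$ giving distinct coordinate multisets). This proves the three parametrisation claims, and with them the agreement of the objects of $E_1$ with the singular orbit.

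For the standard operators: the differentials of the relative BGG resolution are the standard operators attached to the arrows of $W_\mfrr^\mfrq$, all non-zero, and a direct image of a standard operator is standard. By functoriality of $\tau^q_\ast$ (the remark after Example~\ref{example:relative_BGG_3211}) such a differential contributes a differential on $E_1$ only between terms that survive in the same degree, and there it is a standard operator whose order is the difference of generalized conformal weights (Remark~\ref{remark:order=gen_conf_weigt}), hence non-zero --- it descends with non-trivial principal symbol from the corresponding relative BGG operator. A short check in the LS words then shows that an arrow of $W_\mfrr^\mfrq$ joins two surviving terms of equal degree precisely when the added box does not touch the digit in position $k$, and that the bijection $w\mapsto v$ carries these arrows to exactly the arrows of $W^\mfrp$ between elements of $W^{\mfrp,\Sigma}$, i.e. to the standard operators of the singular orbit; the orbit's non-standard operators change the degree and so are invisible on $E_1$. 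I expect the main obstacle to be exactly this last bookkeeping: following a single coordinate rearrangement through the composite of the $W_\mfrr^\mfrq$-action and the $\mbbP^{n-1}$-sorting, matching it with the direct action of $v$ on $\lambda+\rho$, and comparing the two graph structures --- elementary, but intricate in the indices.
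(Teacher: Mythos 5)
Your proposal is correct and follows essentially the same route as the paper's proof, which is just a (much terser) translation of the Bott--Borel--Weil theorem into the LS notation: survival iff $d_k=1$ because the coordinate $n-k$ would otherwise repeat, degree equal to the number of coordinates exceeding $n-k$, i.e.\ the zeros among $d_1,\ldots,d_{k-1}$, and the remaining claims dismissed as obvious. Your version merely fills in the coordinate bookkeeping (the identification of the output with $v\cdot\lambda$ for $v=\LS{d_1\ldots d_{k-1}\,01\,d_{k+1}\ldots d_{n-1}}$ and the matching of arrows) that the paper leaves implicit, and your index computations check out.
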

\begin{proof}
This is just a translation of the Bott-Borel-Weil Theorem in our notation. An element $w =\LS{0 \, d_1 \ldots d_{n-1}} \in W_\mfrr^\mfrq$ will make $\tilde{\lambda}+\rho$ $\mfrp$-regular if and only if it makes the coordinate entry $n-k$ (after the bar) negative. This will happen if and only if $d_k = 1$. The number of the transpositions of adjacent coordinates needed to make $w(\tilde{\lambda}+\rho)$ $\mfrp$-dominant is equal to the number of the coordinates in $w(\tilde{\lambda}+\rho)$ greater than $n-k$ (which can occur only on the positions $2$ to $k$); this equals the number of digits $0$ among $d_1, \ldots, d_{k-1}$. The last two statements are obvious.
\end{proof}

We have an obvious bijection from the singular Hasse diagram (and the surviving part of the relative BGG) to the regular Hasse diagram of rank $n-2$, given by:
\begin{equation}
\label{equation:wannabe_graph_isomorphism}
\LS{d_1 \ldots d_{k-1} \, \mathbf{01} \, d_{k+1} \ldots d_{n-1}} \mapsto \LS{d_1 \ldots d_{k-1} d_{k+1} \ldots d_{n-1}}.
\end{equation}
However, this is not a directed-graph isomorphism. We need to ``add'' more arrows to the left-hand side. Those arrows are the missing non-standard operators, constructed in the following theorem.

\begin{theorem}
\label{theorem:construction_1}
There are non-standard invariant differential operators
\begin{equation}
\label{equation:non_standard_operators_1}
D \colon \mcaO_\mfrp(\nu) \to \mcaO_\mfrp(\nu'')
\end{equation}
for all the pairs $\nu$, $\nu''$ in the singular orbit of the first kind, given by
\begin{equation*}
\nu = \LS{d_1 \ldots d_{k-2} \, 0 \mathbf{01} 1 \, d_{k+3} \ldots d_{n}} \cdot \lambda, \quad \nu'' = \LS{d_1 \ldots d_{k-2} \, 1 \mathbf{01} 0 \, d_{k+3} \ldots d_{n}} \cdot \lambda
\end{equation*}
for $k=2,\ldots,n-2$, or by $\nu = \LS{d_1 \ldots d_{n-3} \, 0 \mathbf{01}} \cdot \lambda$ and $\nu'' = \LS{d_1 \ldots d_{n-3} \, 1 \mathbf{01}} \cdot \lambda$ for $k=n-1$.

If $\lambda$ is minimal as in (\ref{equation:lambda_first_kind}), the operator (\ref{equation:non_standard_operators_1}) is of the order $2$. 
\end{theorem}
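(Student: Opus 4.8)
The plan is to realize $D$ as the $d_2$-differential of the Baston--Eastwood spectral sequence (\ref{equation:penrose_spectral}) attached to the double fibration (\ref{equation:double_fibration_1_1n_n}) and the twistor sheaf $\mcaO_\mfrr(\tilde\lambda)$, and then to read off its order from the generalized conformal weights. All hypotheses for (\ref{equation:penrose_spectral}) are in place by the preceding propositions: $X$ is the Stein big affine cell, $Z=\mbbP^{2n-1}\setminus\mbbP^{n-1}$, and the fibers of $\eta$ are smoothly contractible.

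By Proposition \ref{proposition:relative_BGG_first} the first page of (\ref{equation:penrose_spectral}), as a diagram equipped with its standard operators, is exactly the singular orbit, bigraded so that the BGG-degree $p$ and the direct-image degree $q$ of a surviving object are read off from its LS code as in that proposition. Under the bijection (\ref{equation:wannabe_graph_isomorphism}) with the rank-$(n-2)$ regular Hasse diagram $W^{n-2}$, the standard operators on $E_1$ correspond to those arrows of $W^{n-2}$ that preserve the number of zeros among the first $k-1$ digits; the remaining arrows --- which decrease that number, necessarily by exactly one, namely the flips at positions $(k-1,k)$, resp.\ the appends at position $n-2$ when $k=n-1$ --- are precisely the pairs $(\nu,\nu'')$ of the statement, and each lowers $q$ by one while raising $p$ by two. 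In particular the column between the spots occupied by $\nu$ and $\nu''$ carries no surviving object in rows $q$ and $q-1$, so the $d_1$'s into $\nu''$ and out of $\nu$ vanish; hence $E_2$ at $\nu$ is a quotient of $E_1$ there, $E_2$ at $\nu''$ is a subobject of $E_1$ there, and the only spectral-sequence differential that can link the two spots is $d_2$. Composing
\[ E_1[\nu]\twoheadrightarrow E_2[\nu]\overset{d_2}{\longrightarrow}E_2[\nu'']\hookrightarrow E_1[\nu''] \]
gives a map $\Gamma(X,\mcaO_\mfrp(\nu))\to\Gamma(X,\mcaO_\mfrp(\nu''))$, nonzero exactly when $d_2$ is nonzero at $\nu$; running the same construction over every Stein subset of $X$ (the setup is available around each point of the Lagrangian Grassmannian) turns it into a morphism of sheaves.

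To see $d_2\ne0$ I would compute the abutment $H^\bullet(Z,\mcaO_\mfrr(\tilde\lambda))$. Since $Z$ is $\mbbP^{2n-1}$ with a linear $\mbbP^{n-1}$ removed, $\mcaO_\mfrr(\tilde\lambda)$ is the restriction of the negative line bundle $\mcaO(-k)$, and a linear subspace is locally a complete intersection, the local-cohomology exact sequence of the pair $(\mbbP^{2n-1},\mbbP^{n-1})$ --- together with $H^0(\mbbP^{2n-1},\mcaO(-k))=0$ --- shows that $H^\bullet(Z,\mcaO_\mfrr(\tilde\lambda))$ is concentrated in the single degree $n-1$. On the other hand, each row of the first page, with its standard operators, is a tensor product of regular BGG section-complexes over the contractible cell $X$, hence exact away from the spots at which that row was truncated by a vanishing higher direct image. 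Matching the resulting $E_2$-page against the one-degree abutment forces the spectral sequence to degenerate at $E_3$ and every $d_2$ between surviving subquotients to be an isomorphism; in particular $d_2$ is nonzero at each $(\nu,\nu'')$ in the statement, so the composite above is a \emph{nonzero} morphism $\mcaO_\mfrp(\nu)\to\mcaO_\mfrp(\nu'')$. It does not increase supports, hence is a differential operator by Peetre's theorem (Remark \ref{remark:Petree}); it commutes with left translations by the naturality of the whole construction, hence is invariant; it extends to all of $G/P$ by $G$-equivariance and uniqueness of invariant operators; and being a genuine $d_2$ it is non-standard.

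Finally, the order. By Remark \ref{remark:order=gen_conf_weigt} it equals the difference of the generalized conformal weights of $\nu$ and $\nu''$, which for $\mu=[\mu_1,\dots,\mu_n]$ is $\tfrac12\sum_i\mu_i$. Writing $\nu=w\cdot\lambda$, $\nu''=w''\cdot\lambda$ and using (\ref{equation:action_in_LS}) one gets $\sum_i(w\cdot\lambda)_i=\sum_i\lambda_i-2\sum(\lambda+\rho)_j$, the sum over the positions $j$ carrying the digit $1$ in the LS code of $w$; the two codes in the statement differ only in that $w$ carries a $1$ at position $k+2$ where $w''$ carries one at position $k-1$ (at position $n-2$, when $k=n-1$), so $\mathrm{ord}(D)=(\lambda+\rho)_{k-1}-(\lambda+\rho)_{k+2}$, resp.\ $(\lambda+\rho)_{n-2}$. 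For the minimal $\lambda+\rho$ of (\ref{equation:lambda_first_kind}) these evaluate to $(n-k+1)-(n-k-1)=2$ and $2$. The main obstacle is the third paragraph: beyond pinning down the twistor cohomology, one must verify the combinatorics --- that the $E_2$-page survivors are exactly the endpoints of the arrows $(\nu,\nu'')$, so that no other higher differential, and no failure of row-exactness away from the truncation points, can interfere --- which is precisely what forces each $d_2$ to be nonzero.
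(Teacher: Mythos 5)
Your construction of $D$ is the same as the paper's: the paper builds $D$ by an explicit diagram chase in the \v{C}ech bi-complex computing the higher direct images of the relative BGG resolution, which is exactly the standard construction of the $d_2$ differential of the hypercohomology/Baston--Eastwood spectral sequence that you invoke; the observation that the intermediate column carries no surviving direct image (so that $E_2[\nu]$ is a quotient and $E_2[\nu'']$ a subobject of the respective $E_1$ terms) is the same vanishing the paper uses to make the chase go through, and your appeal to Peetre's theorem, to invariance, and the conformal-weight computation of the order all match the paper. Your well-definedness is delegated to the general theory of spectral sequences where the paper checks it by hand; that is a legitimate shortcut.

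Where you go beyond the paper is the third paragraph, and that part is not established. The paper's proof of Theorem \ref{theorem:construction_1} does not prove $D\neq 0$; it only constructs the map. Your argument for nonvanishing rests on two unproven assertions. First, the concentration of $H^\bullet(Z,\mcaO_\mfrr(\tilde\lambda))$ in degree $n-1$: the local-cohomology sequence for the pair $(\mbbP^{2n-1},\mbbP^{n-1})$ together with depth/codimension considerations gives vanishing in degrees $<n-1$, but the vanishing in degrees $>n-1$ does not follow from it without computing $H^p$ of the local cohomology sheaf $\mathcal{H}^n_{\mbbP^{n-1}}(\mcaO(-k))$, which is supported on an $(n-1)$-dimensional variety and a priori contributes up to total degree $2n-2$; you need the affine-cover argument of Lemma \ref{lemma:vanishing} (or an explicit computation) for the upper bound. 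Second, and more seriously, the claim that each row of $E_1$ ``is a tensor product of regular BGG section-complexes, hence exact away from the truncation spots'' is asserted without proof; the rows are truncations of direct images of the relative BGG, their exactness is precisely the kind of statement the paper only obtains later (Theorem \ref{theorem:exactness}, and only for the total complex, in positive degrees, over the big cell), and without it you cannot compute $E_2$ and conclude that every relevant $d_2$ is an isomorphism. So your proposal proves exactly what the paper proves, by the same method; the additional nonvanishing claim should either be dropped or supported by a genuine computation of the rows of $E_1$ and of the twistor cohomology.
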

\begin{proof}
Take $X'$ to be an open ball inside the big affine cell in $G/P$, and consider the Penrose transform over the corresponding restricted double fibration. In the relative BGG resolution, we find and fix the following sequence:
\begin{equation*}
\xymatrix@=1em{\mu = \LS{0 \, d_1 \ldots d_{k-2} \,0 \mathbf{1} 1\, d_{k+2} \ldots d_{n-1}} \cdot \tilde{\lambda} \ar[d] \\
\mu' = \LS{0 \, d_1 \ldots d_{k-2} \,1 \mathbf{0} 1\, d_{k+2} \ldots d_{n-1}} \cdot \tilde{\lambda} \ar[r] & \mu'' = \LS{0 \, d_1 \ldots d_{k-2} \,1 \mathbf{1} 0\, d_{k+2} \ldots d_{n-1}} \cdot \tilde{\lambda} . }
\end{equation*}
Denote $q=1+$the number of the digits $0$ among $d_1, \ldots, d_{k-2}$. Consider the (part of the) Čech bi-complex that calculates the higher direct images, described in Figure \ref{figure:cech_bi-complex_1}. Here the horizontal morphisms $d_h$ are induced from the differentials of the relative BGG. The vertical morphisms $d_v$ are the usual differentials in the Čech resolution. We have $d_v^2=0$, $d_h^2=0$, and for each square, $d_h d_v =-d_v d_h$. By definition, the vertical cohomologies are equal to the higher direct images of the corresponding sheaves. By Proposition \ref{proposition:relative_BGG_first},
\[ H^q( \check{C}^{\bullet}_{\mu}, d_v ) = \tau_\ast^q \mcaO_\mfrq(\mu) = \mcaO_\mfrp(\nu), \quad H^{q-1}( \check{C}^{\bullet}_{\mu''}, d_v ) = \tau_\ast^{q-1} \mcaO_\mfrq(\mu'') = \mcaO_\mfrp(\nu''). \]
The cochain spaces with nontrivial cohomology are denoted in the bold font. All other vertical cohomologies are trivial, including the complete middle column. 

We will define the operator (\ref{equation:non_standard_operators_1}) on the representatives of the cohomology classes in $H^q( \check{C}^{\bullet}_{\mu}, d_v )$. Take a cocycle $x \in \mathbf{\check{C}^{q}_\mu}$. From $d_v d_h(x) = - d_h d_v(x) =0$ it follows that $d_h(x) \in \check{C}^{q}_{\mu'}$ is a cocycle. Since $H^q( \check{C}^{\bullet}_{\mu'}, d_v ) =0$, it follows that $d_h(x) \in \Im d_v$. So, there is $y \in \check{C}^{q-1}_{\mu'}$ such that $d_v(y) = d_h(x)$. Then, $d_h(y) \in \mathbf{\check{C}^{q-1}_{\mu''}}$, in the correct cochain space. The element $d_h (y)$ is a cocycle: $d_v d_h (y) = - d_h d_v (y ) = - d_h^2 (x) =0$.
\begin{figure}[h]
\[ \xymatrix@=1em{\mathbf{\check{C}^{q}_\mu} \ar[r] & \check{C}^{q}_{\mu'} \ar[r] & \check{C}^{q}_{\mu''} \\
\check{C}^{q-1}_\mu \ar[r] \ar[u] & \check{C}^{q-1}_{\mu'} \ar[r] \ar[u] & \mathbf{\check{C}^{q-1}_{\mu''}} \ar[u] \\
\mcaO_\mfrq(\mu) \ar[r] \ar@{..}[u] & \mcaO_\mfrq(\mu') \ar[r] \ar@{..}[u] & \mcaO_\mfrq(\mu'') \ar@{..}[u]} \quad \xymatrix@=1em{x \in \mathbf{\check{C}^{q}_{\mu}} \ar@{|->}[r] \ar@{|-->}[rrd] & d_h (x) \in \check{C}^{q}_{\mu'} \\
& y \in \check{C}^{q-1}_{\mu'} \ar@{|->}[u] \ar@{|->}[r] & d_h (y) \in \mathbf{\check{C}^{q-1}_{\mu''}} . } \]
\caption{Diagram chasing over the Čech bi-complex (1)}
\label{figure:cech_bi-complex_1}
\end{figure}

Next, we check that we have a well defined map $[x] \mapsto [d_h(y)]$ on the cohomology classes. Take another cocycle $x'$ in the same cohomology class $[x]$, and find $y'$ so that $d_v(y')=d_h(x')$. Since $x-x' = d_v(t)$ for some $t \in \check{C}^{q-1}_\mu$, observe that
\begin{equation}
\label{equation:well_def_on_cocycles}
d_v(y - y' + d_h(t)) = d_h(x-x') - d_h d_v(t) = 0,
\end{equation}
so we conclude that $y - y' + d_h(t) = d_v(t')$ for some $t' \in \check{C}^{q-2}_{\mu'}$. Finally,
\[  d_v (-d_h(t')) = d_h (d_v(t')) =  d_h (y) - d_h(y') + d_h^2(t) = d_h (y) - d_h(y') \in \Im d_v.\]

Therefore, we have a well defined map (\ref{equation:non_standard_operators_1}), given by $D([x])=[d_h(y)]$, which is by construction local, and $G$-invariant. By Remark \ref{remark:Petree}, it is a differential operator. By Remark \ref{remark:order=gen_conf_weigt}, its order is given by the difference of the generalized conformal weights, which is easily seen to be $2$ in the minimal case.
\end{proof}

\begin{definition}
In case of singularity of the first kind, the singular orbit with all the non-standard operators constructed in Theorem \ref{theorem:construction_1} included in it, is called the \be{singular BGG complex} of infinitesimal character $\lambda+\rho$.
\end{definition}

\begin{theorem} In case of singularity of the first kind:
\label{theorem:squares_anticommute_1}
\begin{enumerate}
\item The singular BGG complexes of rank $n$ are directed-graph isomorphic to the regular one of rank $n-2$.
\item
Every square in the singular BGG complex anticommutes.
\item If we add up all objects of the singular BGG complex of the same degree\footnote{The degree in the singular BGG complex is defined using the isomorphism (\ref{equation:wannabe_graph_isomorphism}).}, we get a cochain complex.
\end{enumerate}
\end{theorem}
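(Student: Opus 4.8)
The three claims are essentially bookkeeping consequences of the construction in Theorem~\ref{theorem:construction_1}, organized carefully. I would prove them in order, since (2) feeds into (3).

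For part (1), the bijection (\ref{equation:wannabe_graph_isomorphism}) is already a bijection of vertex sets between the singular Hasse diagram (with the non-standard arrows adjoined) and $W^{n-2}$; what remains is to check it respects arrows. I would argue as follows. The regular Hasse diagram $W^{n-2}$ has, by the inductive description, two types of arrows: the ``within a row'' moves $\LS{\ldots 01 \ldots} \to \LS{\ldots 10 \ldots}$ and the ``last digit'' move $\LS{\ldots 0} \to \LS{\ldots 1}$. I must match each against an operator in the singular complex. An arrow of $W^{n-2}$ not touching the two deleted positions $k,k+1$ (i.e.\ flipping a consecutive $\LS{01}\to\LS{10}$ entirely among the $d_j$'s, $j\ne k,k+1$, or changing the last digit) lifts to a \emph{standard} operator between the corresponding elements of $W^{\mfrp,\Sigma}$: indeed on the $W^{\mfrp,\Sigma}$ side the $\LS{01}$ block at positions $k,k+1$ is inert and the same digit flip is applied, and by Proposition~\ref{proposition:relative_BGG_first} these objects and standard arrows already sit in the first page of the spectral sequence (\ref{equation:penrose_spectral}), which was shown there to agree with the singular orbit. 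The remaining arrows of $W^{n-2}$ are those flipping the pair of digits straddling the gap, i.e.\ $\LS{\ldots d_{k-1}\,0\,\ast\,1\,d_{k+2}\ldots}$-type moves in rank $n-2$; under the inverse of (\ref{equation:wannabe_graph_isomorphism}) these become precisely the pairs $(\nu,\nu'')$ connected by the non-standard operators $D$ of Theorem~\ref{theorem:construction_1}. One checks the index ranges $k=2,\ldots,n-2$ (and the boundary case $k=n-1$) exhaust exactly the arrows of $W^{n-2}$ crossing the deleted block, and that no arrow is produced twice. So every arrow of $W^{n-2}$ corresponds to exactly one operator (standard or non-standard) in the singular BGG complex and conversely, giving the directed-graph isomorphism. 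The fact that the shape of the singular complex is independent of $\lambda$ (as long as the order of coordinates of $\lambda+\rho$ is fixed) follows because both the Hasse combinatorics and the construction of $D$ used only that order.

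For part (2), there are three kinds of squares to consider, according to how many sides are non-standard. Squares with all four sides standard anticommute because, by the last sentence of Example~\ref{example:relative_BGG_3211} and functoriality of higher direct images, they are direct images of squares in the relative BGG resolution (\ref{equation:penrose_relativeBGG}), which anticommute (being built from the BGG differentials of a resolution; the sign is the standard cochain-complex sign that the Čap--Slovák--Souček BGG machinery, or equivalently Proposition~\ref{proposition:hasse_bijection_saturated} applied to the fiber, installs). Squares with exactly one non-standard side, and squares with two opposite non-standard sides, require a diagram chase over the Čech bi-complex of Figure~\ref{figure:cech_bi-complex_1} — this is the main obstacle. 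The operator $D$ was defined by a connecting-type zig-zag $[x]\mapsto[d_h(y)]$ with $d_v(y)=d_h(x)$; to prove a square containing $D$ anticommutes I would enlarge the bi-complex to include the fourth vertex of the square and run a simultaneous chase, using $d_hd_v=-d_vd_h$ and $d_h^2=d_v^2=0$ to produce, from a cocycle representing the common source, a common coboundary witnessing that the two composites around the square differ by $d_v$ of something, hence agree in cohomology up to the sign dictated by the horizontal BGG differentials. The key point making this work is that all the ``intermediate'' columns in the relevant strip of the bi-complex are $d_v$-acyclic (Proposition~\ref{proposition:relative_BGG_first} says only the marked cochain spaces carry cohomology), so every cocycle we meet along the way that needs to be ``filled'' can be filled, and every ambiguity in a fill is a coboundary; this is exactly the same mechanism that made $D$ well defined in (\ref{equation:well_def_on_cocycles}), just applied one square at a time. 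I would do the one-non-standard-side case first and then note the two-opposite-sides case is the same chase run twice.

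For part (3), once (1) and (2) are in hand this is formal. Using the isomorphism (\ref{equation:wannabe_graph_isomorphism}) to define the degree, set $\Delta^j := \bigoplus \mcaO_\mfrp(\mu)$ over vertices of singular-degree $j$, with differential $\partial^j$ the sum, over all arrows from degree $j$ to degree $j+1$, of the corresponding operators, each multiplied by the sign $(-1)^{(\text{number of }1\text{'s strictly left of the flipped position})}$ — the standard sign convention that turns the anticommuting ``cube'' of the regular BGG diagram into a genuine complex (this is the sign function used implicitly in Theorem~\ref{theorem:regular_BGG_geometric}). Then $\partial^{j+1}\circ\partial^j$ is a sum indexed by length-two paths; paths through distinct squares cancel in pairs by part~(2) together with the sign choice (each unordered pair of composable arrows bounds exactly one square, contributing $+$ and $-$), and there are no degenerate length-two paths (the Hasse diagram of a $|1|$-graded parabolic, equivalently $W^{n-2}$, has no multiple edges and no ``digons''), so $\partial^2=0$. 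The only thing to be careful about is that the sign function is well defined on the singular diagram, i.e.\ transports correctly under (\ref{equation:wannabe_graph_isomorphism}); but since that map is a directed-graph isomorphism onto $W^{n-2}$ by part (1), one simply pulls back the sign function from $W^{n-2}$, where it is the classical one, and the cancellation argument is identical to the regular case.
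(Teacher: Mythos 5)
Your overall strategy coincides with the paper's: part (1) is a direct combinatorial check that (\ref{equation:wannabe_graph_isomorphism}) becomes a directed-graph isomorphism once the non-standard arrows of Theorem \ref{theorem:construction_1} are adjoined, part (2) is a diagram chase over the Čech bi-complex of Figure \ref{figure:cech_bi-complex_1} using $d_hd_v=-d_vd_h$ and the acyclicity of the intermediate columns, and part (3) is formal from part (2). Two remarks on the execution. First, in (2) your case ``exactly one non-standard side'' is vacuous: the two paths around any square realize the same two digit-flips in the two possible orders, so an edge crossing the deleted block in one path is matched by a parallel such edge in the other path; every square has either zero or two (parallel) non-standard sides, which is why the paper treats only the mixed square with two parallel standard and two parallel non-standard operators. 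Moreover the chase is shorter than your ``simultaneous chase'' suggests: the standard operators of the square are themselves induced by relative-BGG differentials $d$ (horizontal, but in a different direction from the $d_h$ used to define $D$), and $d$ anticommutes with both $d_h$ and $d_v$; hence one may simply take $y'=d(y)$ as the lift for $d(x)$, and then $d_h(y')=d_h(d(y))=-d(d_h(y))$ gives the anticommutativity in one line.

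Second, your treatment of (3) has the logic backwards. If every square anticommutes, then taking $\partial=\sum_i d_i$ with \emph{no} signs already gives $\partial^2=\sum_{i<j}(d_id_j+d_jd_i)=0$; a sign convention is what one needs when the squares \emph{commute}. Twisting an anticommuting diagram by a sign function would in general destroy $\partial^2=0$ unless that function is square-consistent, meaning the product of the two signs along either path around a square is the same. Your particular choice $(-1)^{\#\{1\text{'s strictly left of the flipped position}\}}$ happens to be square-consistent (a flip at position $p<q$ does not change the number of $1$'s to the left of $q$), so no harm is done --- but it is also doing no work, and attributing the cancellation to ``part (2) together with the sign choice'' conflates the two conventions. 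Drop the signs and quote part (2) directly, as the paper does.
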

\begin{proof}
It is easy to check (a): now (\ref{equation:wannabe_graph_isomorphism}) is a directed-graph isomorphism. The statement (c) follows from (b), since $\left(\sum d_i \right)^2 = \sum_{i \neq j} d_i d_j = \sum_{i < j} (d_i d_j + d_j d_i )= 0$.
To prove (b), observe that the standard operators anticommute, since this was already true in the relative BGG resolution. There are no squares with all the operators non-standard. Therefore, we only need to check combination of a standard and a non-standard operator. A typical situation in the relative BGG resolution that induces such a square is ($k$-th coordinate is denoted in the bold font):
\[ \xymatrix@=1em{ \LS{0 \ldots 01 \ldots 0 \mathbf{1}1 \ldots} \ar[r] \ar[d] & \LS{0 \ldots 01 \ldots 1 \mathbf{0}1 \ldots}  \ar[r] \ar[d] & \LS{0 \ldots 01 \ldots 1 \mathbf{1}0 \ldots}  \ar[d] \\
\LS{0\ldots 10 \ldots 0 \mathbf{1}1 \ldots} \ar[r] & \LS{0\ldots 10  \ldots 1 \mathbf{0}1 \ldots} \ar[r] & \LS{0 \ldots 10 \ldots 1 \mathbf{1}0 \ldots}. } \]
(Other possible situations start with $\LS{0 \ldots 0 \mathbf{1}1 \ldots 01 \ldots}$, or $\LS{0\ldots 0 \mathbf{1}1 \ldots 0})$. Denote by $\mu$, $\mu'$, $\mu''$ the objects in the first row, and by $\theta$, $\theta'$, $\theta''$ the objects in the second row, and consider the Čech bi-complex above it, with the same notation as in the proof of Theorem \ref{theorem:construction_1}. Denote by $d$ all the standard operators $\mu \to \theta$, $\mu' \to \theta'$ and $\mu'' \to \theta''$ in the relative BGG resolution. These are just horizontal differentials, but they go in a different direction then those we denoted by $d_h$ in the definition of $D$. The maps $d$ anticommute with both $d_h$ and $d_v$. This is the part of the Čech bi-complex that is mapped to a square in the singular orbit with two parallel standard, and two parallel non-standard operators:
\begin{equation*}
\xymatrix@=.8em{
 & d(x) \in \mathbf{\check{C}^{q}_{\theta}}  \ar@{|->}[r] & d_h ( d(x)) \\
x \in \mathbf{\check{C}^{q}_{\mu}} \ar@{|->}[r] \ar@{|->}[ru] & d_h (x) & y' \ar[u]  \ar@{|->}[u] \ar@{|->}[r] & d_h (y') \in \mathbf{\check{C}^{q-1}_{\theta''}} \\
& y \ar@{|->}[u] \ar@{|->}[r] & d_h (y) \in \mathbf{\check{C}^{q-1}_{\mu''}} \ar@{|-->}[ru]_{-d} .}
\end{equation*}
We need to show that $[d(d_h(y))] = - [d_h(y')]$. First, for $y'$ we can take $d(y)$ without changing the class $D([d(x)])$, since $d_v(d(y))=-d(d_v(y)) = -d(d_h(x)) = d_h(d(x)$). Therefore, $d_h(y') = d_h(d(y)) = - d(d_h(y))$.
\end{proof}

\subsection{Second kind}
Suppose $\lambda+\rho$ is orthogonal only to the long simple root. We work with the minimal such $\lambda+\rho =[n-1,\ldots,1,0 \pbar]$, so $\tilde{\lambda}+\rho=[0 \pbar n-1,\ldots,1]$.
\begin{proposition}
\label{proposition:relative_BGG_second}
In case of singularity of the second kind, all the objects in the relative BGG resolution survive a higher direct image. The surviving degree of an object parametrized by the LS word $w=\LS{0 \, d_1 \ldots d_{n-1}}$ is equal to the number of digits $0$ among $d_1, \ldots, d_{n-1}$. The result of the direct image in this degree corresponds to $\LS{d_1 \ldots d_{n-1} \, 0} \in W^{\mfrp,\Sigma}$. The first page of the spectral sequence (\ref{equation:penrose_spectral}) agrees with the singular orbit, including both the objects and the standard operators.
\end{proposition}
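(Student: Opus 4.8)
This proposition, exactly like Proposition~\ref{proposition:relative_BGG_first} for the first kind, is a translation of the relative Bott--Borel--Weil theorem along $\tau$ into the LS notation; recall that the fibre of $\tau$ is $P/Q\cong\mbbP^{n-1}$ with relative Hasse diagram the chain (\ref{equation:Hasse_Pn}). The plan is to take an object $\mcaO_\mfrq(w\cdot\tilde\lambda)$ of the relative BGG resolution, with $w=\LS{0\,d_1\ldots d_{n-1}}\in W_\mfrr^\mfrq$, compute $w(\tilde\lambda+\rho)$ by (\ref{equation:action_in_LS}), and read off from its first $n$ coordinates whether $\tau^q_\ast$ vanishes and, if not, in which degree it survives and with which result.

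First I would note that, since the first digit of $w$ is $0$, the element $w$ leaves the first coordinate of $\tilde\lambda+\rho=[0\pbar n-1,\ldots,1]$ untouched at the value $0$, while its remaining $n-1$ slots are obtained from $(n-1,\ldots,1)$ by deleting the entries indexed by the digits $1$ of $d_1\ldots d_{n-1}$ and appending their negatives in reverse order --- hence a decreasing block of positive integers followed by a decreasing block of negative ones. In particular the first $n$ coordinates of $w(\tilde\lambda+\rho)$ are pairwise distinct (a single $0$ together with distinct nonzero integers), so this weight is $\mfrp$-regular; by the relative Bott--Borel--Weil theorem, $\mcaO_\mfrq(w\cdot\tilde\lambda)$ then has a nonzero higher direct image, i.e.\ no object is killed. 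The unique $v\in W_\mfrp$ with $v^{-1}\in W_\mfrp^\mfrq$ making this weight $\mfrp$-dominant simply moves the $0$ to the right past the positive block, so its length equals the number of positive entries after the first slot, i.e.\ the number of digits $0$ among $d_1,\ldots,d_{n-1}$, which is the claimed surviving degree.

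It remains to identify the resulting weight and to treat the first page. After applying $v$ one gets $v(w(\tilde\lambda+\rho))=(\text{positives, decreasing};\,0;\,\text{negatives, decreasing})$, and comparing this with $v'(\lambda+\rho)$ for $v'=\LS{d_1\ldots d_{n-1}\,0}\in W^{\mfrp,\Sigma}$ via (\ref{equation:action_in_LS}) and Proposition~\ref{proposition:singular_orbit_via_LS} shows that the two agree: both delete from $(n-1,\ldots,1,0)$ exactly the entries indexed by the digits $1$ of $d_1\ldots d_{n-1}$ and place the surviving $0$ between the remaining positives and the negatives; the index shift between $\LS{0\,d_1\ldots d_{n-1}}$ and $\LS{d_1\ldots d_{n-1}\,0}$ is exactly what makes this work. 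Since the objects of the first page therefore match the singular orbit, it is then enough, by functoriality of the direct image (as already used after Example~\ref{example:relative_BGG_3211}), to note that a standard operator of the relative BGG resolution survives on the first page precisely when its two endpoints survive in the same direct image degree: the ``add a box'' step $\LS{\ldots 01\ldots}\to\LS{\ldots 10\ldots}$ inside $d_1\ldots d_{n-1}$ leaves the count of digits $0$ unchanged and hence survives as a standard operator, whereas the step flipping $d_{n-1}=0$ to $1$ drops that count by one and hence dies on the first page --- these are exactly the arrows later replaced by non-standard operators. A short check then identifies the surviving standard operators with the standard operators of the singular orbit.

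The only real difficulty I foresee is bookkeeping: keeping track of the two bars (the $\alpha_1$-bar that $\tau$ removes and the $\alpha_n$-bar), of the index shift in the correspondence $\LS{0\,d_1\ldots d_{n-1}}\leftrightarrow\LS{d_1\ldots d_{n-1}\,0}$, and --- for the last assertion --- of the fact that the surviving maps are genuinely nonzero and that no standard operator of the singular orbit is omitted. As everything runs exactly parallel to the first-kind case, none of it should be genuinely hard.
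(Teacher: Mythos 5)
Your proposal is correct and follows exactly the route the paper intends: the paper's own proof of this proposition is literally ``the same as for Proposition~\ref{proposition:relative_BGG_first}'', i.e.\ a translation of the relative Bott--Borel--Weil theorem along $\tau$ (with fibre $\mbbP^{n-1}$ and Hasse chain (\ref{equation:Hasse_Pn})) into the LS notation, which is precisely what you carry out. Your version is in fact more explicit than the paper's, spelling out why every weight is $\mfrp$-regular (the first $n$ coordinates are a single $0$ plus distinct nonzero integers), the degree count, the matching of $\LS{0\,d_1\ldots d_{n-1}}$ with $\LS{d_1\ldots d_{n-1}\,0}$, and which standard operators survive.
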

\begin{proof} The same as for Proposition \ref{proposition:relative_BGG_first}.
\end{proof}

We split the singular orbit into the \be{even} and the \be{odd part}, parametrized respectively by the following subsets of the singular Hasse diagram:
\[ W^{\mfrp,\Sigma}_{\epsilon} = \left\{ \LS{d_1 \ldots d_{n-1} \, 0} \text{ with the number of digits $1$ of parity $\epsilon$}\right\}, \ \epsilon \in \{\text{even}, \text{odd}\}. \]
Recall again that the number of the digits $1$ in $w \in W^{\mfrp,\Sigma}$ is equal to the number of the negative coordinates in $w \lambda$. Both $W^{\mfrp,\Sigma}_{\epsilon}$ are in bijection with the regular Hasse diagram of rank $n-2$; in each case, the bijection is:
\begin{equation}
\label{equation:wannabe_graph_isomorphism_2}
\LS{d_1 \ldots d_{n-2} d_{n-1} \, 0} \mapsto \LS{d_1 \ldots d_{n-2}}.
\end{equation}
Again, the idea is to add enough arrows on the left-hand side to make (\ref{equation:wannabe_graph_isomorphism_2}) a directed-graph isomorphism. By inspection, the missing arrows should occur in these situations: $\LS{\ldots 000} \to \LS{\ldots 110}$. For constructing them, we need a crucial fact about the singular orbit of the second kind (see \cite[p. 63]{enright1987categories}):
\begin{lemma}[Enright-Shelton]
\label{lemma:Enright-Shelton_splitting}
There are no non-trivial morphisms between subquotients of objects from the blocks with different parities.
\end{lemma}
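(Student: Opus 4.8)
The plan is to derive this from the Enright--Shelton decomposition of the singular block $\mcaO^\mfrp_\lambda$ in type $C$ \cite[p.~63]{enright1987categories} (equivalently, from the Enright--Shelton equivalence \cite[5.5]{enright2014diagrams}), and the first move is to reduce the morphism statement to one about composition multiplicities. Write $L(w\cdot\lambda)$ for the simple head of $M_\mfrp(w\cdot\lambda)$, $w\in W^{\mfrp,\Sigma}$; these are the simple objects of $\mcaO^\mfrp_\lambda$, and an object ``from the block of parity $\epsilon$'' is by definition one all of whose composition factors are $L(w\cdot\lambda)$ with $w\in W^{\mfrp,\Sigma}_\epsilon$. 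I claim the lemma is equivalent to
\begin{equation}
\label{equation:parity_mult}
[M_\mfrp(w\cdot\lambda):L(v\cdot\lambda)]\neq 0 \quad\Longrightarrow\quad w\text{ and }v\text{ have the same parity}.
\end{equation}
Indeed, granting \eqref{equation:parity_mult}, every subquotient of an even generalized Verma module has only even composition factors, and likewise in the odd case; so if $\phi$ maps a subquotient of an even object to a subquotient of an odd object, then $\operatorname{im}\phi$ would have only even and only odd composition factors and therefore vanishes. The converse is seen by applying the lemma to $\operatorname{id}_{L(v\cdot\lambda)}$ in a putative situation where $L(v\cdot\lambda)$ occurs in $M_\mfrp(w\cdot\lambda)$ with $w,v$ of different parity. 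So the entire content is \eqref{equation:parity_mult}.

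To attack \eqref{equation:parity_mult} I would translate onto the wall $\alpha_n$. Fix a regular integral weight $\lambda^+$ whose Weyl chamber has $\lambda$ on its single wall, the one orthogonal to $\alpha_n$, and let $T=T^{\lambda^+}_\lambda$ be the translation functor. Every $v\in W^{\mfrp,\Sigma}$ has an LS code ending in $0$; let $\widehat v\in W^\mfrp$ be the element with the same first $n-1$ digits but final digit $1$, so that $\widehat v\cdot\lambda=v\cdot\lambda$ and $\widehat v$ has exactly one more digit $1$ than $v$. Standard properties of translation onto a wall give $T M_\mfrp(\widehat v\cdot\lambda^+)\cong M_\mfrp(v\cdot\lambda)$, $T L(\widehat v\cdot\lambda^+)\cong L(v\cdot\lambda)$, and $T L(y\cdot\lambda^+)=0$ for every $y\in W^\mfrp$ whose LS code ends in $0$; exactness of $T$ then yields
\begin{equation*}
[M_\mfrp(v\cdot\lambda):L(v'\cdot\lambda)]=[M_\mfrp(\widehat v\cdot\lambda^+):L(\widehat{v'}\cdot\lambda^+)] \qquad (v,v'\in W^{\mfrp,\Sigma}).
\end{equation*}
Since $v\mapsto\widehat v$ raises the number of digits $1$ by one for both indices, \eqref{equation:parity_mult} is equivalent to the claim that, in the regular rank-$n$ Hermitian block of type $C$, the multiplicity $[M_\mfrp(y\cdot\lambda^+):L(y'\cdot\lambda^+)]$ vanishes whenever $y,y'$ both end in $1$ and their numbers of digits $1$ have different parities. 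Alternatively, one bypasses this by transporting the whole question along the Enright--Shelton equivalence, under which $W^{\mfrp,\Sigma}_{\text{even}}$ and $W^{\mfrp,\Sigma}_{\text{odd}}$ become the parametrizing sets of two disjoint regular Hermitian blocks of type $C$ and rank $n-2$ (precisely the ``same shape in lower rank'' phenomenon noted in the introduction), which makes the decomposition --- hence \eqref{equation:parity_mult} --- immediate.

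The step I expect to be the real obstacle is exactly this last input. It does not follow from the usual $\Ext$-parity theorem, because the parity of the number of digits $1$ is not the length parity: by \eqref{equation:LS_length} the latter also depends on $\sum_j i_j$. It is a genuine vanishing statement for the parabolic (resp.\ singular parabolic) Kazhdan--Lusztig polynomials in cominuscule type $C$, and this is where the combinatorics of the Lagrangian Grassmannian actually enters. I would import it from the Enright--Shelton analysis of the $\mfrsp(2n,\mbbR)$ singular blocks \cite{enright1987categories}, or from the known explicit formulas for these polynomials; the remaining parts of the argument --- the reduction to \eqref{equation:parity_mult} and the translation-functor bookkeeping --- are routine.
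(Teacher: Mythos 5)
This lemma is not proved in the paper at all: it is quoted as an external result, with the parenthetical citation to \cite[p.~63]{enright1987categories} just before its statement serving as the entire justification. So there is no internal argument to compare yours against, and the honest verdict is that your proposal, while more explicit, bottoms out in exactly the same place. Your reduction is correct and worth having on record: separation of composition factors by parity does imply the Hom-vanishing between subquotients (the image of such a morphism would have to have all its factors of both parities, hence be zero), and that Hom-vanishing is all the paper ever uses (in Theorem~\ref{theorem:construction_2}, to kill the maps (\ref{equation:trivial_map}) and $\tilde{d}$, which under the duality are morphisms between subquotients of generalized Verma modules of definite parity). The translation-functor bookkeeping is standard and fine, up to a convention-dependent choice of which coset representative survives translation onto the $\alpha_n$-wall --- this does not affect the multiplicity identity you extract, since both representatives give the same singular weight. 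But you correctly identify that the actual content --- the parity vanishing of the relevant (singular parabolic) Kazhdan--Lusztig multiplicities, equivalently the decomposition of $\mcaO^\mfrp_\lambda$ into an even and an odd block --- does not follow from the usual length-parity $\Ext$-vanishing and must be imported from Enright--Shelton. Since that is precisely what the paper imports wholesale, your proposal is in effect a fleshed-out version of the paper's citation rather than an independent proof; to make it self-contained one would have to reprove the Enright--Shelton block decomposition for this family of singular blocks, which neither you nor the paper attempts.
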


\begin{theorem}
\label{theorem:construction_2}
There are non-standard invariant differential operators
\begin{equation}
\label{equation:non_standard_operators_2}
D \colon \mcaO_\mfrp(\nu) \to \mcaO_\mfrp(\nu''')
\end{equation}
for all the pairs $\nu$, $\nu'''$ in the singular orbit of the second kind, given by
\begin{equation*}
\nu = \LS{d_1 \ldots d_{n-3} \, 00 \mathbf{0}} \cdot \lambda, \quad \nu''' = \LS{d_1 \ldots d_{n-3} \, 11 \mathbf{0}} \cdot \lambda.
\end{equation*}
If $\lambda$ is minimal, the operator (\ref{equation:non_standard_operators_2}) is of the order $3$. 
\end{theorem}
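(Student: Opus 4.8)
The plan is to follow closely the strategy of the proof of Theorem~\ref{theorem:construction_1}, extending the diagram chase by one extra step and using the Enright--Shelton block splitting (Lemma~\ref{lemma:Enright-Shelton_splitting}) to neutralize the intermediate columns, which in the second kind carry non-trivial vertical cohomology. So, fix a binary word $d_1\ldots d_{n-3}$, take $X'$ to be an open ball inside the big affine cell of $G/P$, and work with the Penrose transform over the corresponding restricted double fibration. In the relative BGG resolution of $\eta^{-1}\mcaO_\mfrr(\tilde{\lambda})$, fix the chain of three standard operators $\mu_0 \to \mu_1 \to \mu_2 \to \mu_3$, where $\mu_0,\mu_1,\mu_2,\mu_3$ are the relative-BGG objects attached to the LS words $\LS{0\,d_1\ldots d_{n-3}\,00}$, $\LS{0\,d_1\ldots d_{n-3}\,01}$, $\LS{0\,d_1\ldots d_{n-3}\,10}$, $\LS{0\,d_1\ldots d_{n-3}\,11}$ applied to $\tilde{\lambda}$. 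Writing $q$ for $2$ plus the number of digits $0$ among $d_1,\ldots,d_{n-3}$, Proposition~\ref{proposition:relative_BGG_second} gives that the only non-vanishing higher direct images along $\tau$ of these four sheaves are $\tau^{q}_\ast\mcaO_\mfrq(\mu_0)=\mcaO_\mfrp(\nu)$, $\tau^{q-1}_\ast\mcaO_\mfrq(\mu_1)=\mcaO_\mfrp(\nu_1)$, $\tau^{q-1}_\ast\mcaO_\mfrq(\mu_2)=\mcaO_\mfrp(\nu_2)$ and $\tau^{q-2}_\ast\mcaO_\mfrq(\mu_3)=\mcaO_\mfrp(\nu''')$, where $\nu_1=\LS{d_1\ldots d_{n-3}\,010}\cdot\lambda$ and $\nu_2=\LS{d_1\ldots d_{n-3}\,100}\cdot\lambda$. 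The crucial structural point is that $\nu$ and $\nu'''$ have the same number of digits $1$ modulo $2$, while $\nu_1$ and $\nu_2$ have the opposite parity; hence by Lemma~\ref{lemma:Enright-Shelton_splitting} the pair $\{\nu,\nu'''\}$ and the pair $\{\nu_1,\nu_2\}$ lie in the two category-$\mcaO$ blocks of opposite parity, between which (by the duality between invariant differential operators and generalized Verma module homomorphisms recalled above) there is no non-zero invariant differential operator, nor any non-zero $\mfrg$-module morphism between subquotients of the associated generalized Verma modules.

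Then I would run the diagram chase over the relevant portion of the Čech bi-complex built from the four columns $\mu_0,\mu_1,\mu_2,\mu_3$, with anticommuting horizontal differential $d_h$ induced from the relative BGG and vertical Čech differential $d_v$, exactly as in Figure~\ref{figure:cech_bi-complex_1}. Let $x\in\check{C}^{q}_{\mu_0}$ be a $d_v$-cocycle representing a section of $\mcaO_\mfrp(\nu)$ over $X'$. Since $\tau^{q}_\ast\mcaO_\mfrq(\mu_1)=0$, there is $y_1\in\check{C}^{q-1}_{\mu_1}$ with $d_v y_1=d_h x$; then $d_h y_1\in\check{C}^{q-1}_{\mu_2}$ is a $d_v$-cocycle and so determines a class in $\mcaO_\mfrp(\nu_2)$. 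Up to the indeterminacy coming from the choice of $y_1$ --- which shifts $[d_h y_1]$ only by the image of the standard operator $\mcaO_\mfrp(\nu_1)\to\mcaO_\mfrp(\nu_2)$ --- the assignment $[x]\mapsto[d_h y_1]$ is a local, $G$-equivariant map from $\mcaO_\mfrp(\nu)$ into a quotient sheaf belonging to the block of parity opposite to that of $\nu$; by Lemma~\ref{lemma:Enright-Shelton_splitting} it vanishes. Hence, after correcting $y_1$ by a suitable $d_v$-cocycle, one may take $d_h y_1=d_v y_2$ for some $y_2\in\check{C}^{q-2}_{\mu_2}$; then $d_h y_2\in\check{C}^{q-2}_{\mu_3}$ is again a $d_v$-cocycle (as $d_v d_h y_2=-d_h d_v y_2=-d_h^2 y_1=0$), so it defines a class in $\mcaO_\mfrp(\nu''')$, and I set $D([x]):=[d_h y_2]$.

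It remains to check that $D$ is well defined, local, and $G$-invariant. Independence of the choice of representative $x$ and of the cochains $y_1,y_2$ is verified exactly as in the proof of Theorem~\ref{theorem:construction_1} (cf.~\eqref{equation:well_def_on_cocycles}); the new inputs are that the $y_1$-ambiguity is governed by the standard operator $\mcaO_\mfrp(\nu_1)\to\mcaO_\mfrp(\nu_2)$ (dealt with in the previous step), and the residual ambiguity of $[d_h y_2]$ is controlled by operators $\mcaO_\mfrp(\nu_2)\to\mcaO_\mfrp(\nu''')$ and $\mcaO_\mfrp(\nu_1)\to\mcaO_\mfrp(\nu''')$, both of which cross the two parity blocks and hence vanish by Lemma~\ref{lemma:Enright-Shelton_splitting}. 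Thus $D$ is a well-defined, local, $G$-invariant map, so by Peetre's theorem (Remark~\ref{remark:Petree}) it is an invariant differential operator; it is non-standard because it first appears on a higher page of the spectral sequence~\eqref{equation:penrose_spectral}. Finally, by Remark~\ref{remark:order=gen_conf_weigt} the order of $D$ equals the difference of the generalized conformal weights of $\mcaO_\mfrp(\nu)$ and $\mcaO_\mfrp(\nu''')$, which a direct computation from the description~\eqref{equation:action_in_LS} of the affine action shows to equal $(\lambda+\rho)_{n-2}+(\lambda+\rho)_{n-1}$; for the minimal $\lambda+\rho=[n-1,\ldots,2,1,0\pbar]$ this is $2+1=3$.

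The essential difference from the first kind --- and the main obstacle --- is that the chase now passes through the columns $\mu_1$ and $\mu_2$, whose vertical cohomologies $\mcaO_\mfrp(\nu_1)$ and $\mcaO_\mfrp(\nu_2)$ are non-zero. This produces a genuine obstruction to continuing the chase past $\mu_1$ and a genuine ambiguity in the output at $\mu_3$; in the first kind the analogous middle column had vanishing vertical cohomology and no such issues arose. What dissolves them is exactly the Enright--Shelton parity decomposition of the singular block: since $\nu_1,\nu_2$ lie in the block opposite to $\nu,\nu'''$, every obstruction class and every ambiguity class that appears is the image of an invariant operator --- equivalently, via the duality, of a $\mfrg$-morphism between subquotients of generalized Verma modules --- that crosses the two blocks, and is therefore zero. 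Carrying out this bookkeeping carefully, recognizing precisely each arising class as such a cross-block map, is the technical core of the argument.
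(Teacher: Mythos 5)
Your proposal is correct and follows essentially the same route as the paper's proof: the same four-column Čech bi-complex chase, with the obstruction at the $\mu_2$-column killed by passing to the cokernel of the standard operator $\mcaO_\mfrp(\nu_1)\to\mcaO_\mfrp(\nu_2)$ and invoking the Enright--Shelton parity splitting, and the residual ambiguity of $[d_h y_2]$ killed by the induced cross-parity map into $\mcaO_\mfrp(\nu''')$ (the paper's auxiliary map $\tilde{d}$ on $\Ker d$). Your computation of the order via the difference of generalized conformal weights, giving $(\lambda+\rho)_{n-2}+(\lambda+\rho)_{n-1}=3$ in the minimal case, also matches the paper.
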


\begin{proof}
Take $X'$ to be an open ball inside the big affine cell in $G/P$, and consider the Penrose transform over the corresponding restricted double fibration. In the relative BGG resolution, we find and fix the following sequence:
\begin{equation*}
\xymatrix@=1em{
\mu = \LS{0 \, d_1 \ldots d_{n-3} \, 00} \cdot \tilde{\lambda} \ar[r] & \mu' = \LS{0 \, d_1 \ldots d_{n-3} \, 01} \cdot \tilde{\lambda} \ar[d] \\ & \mu'' = \LS{0 \, d_1 \ldots d_{n-3} \, 10} \cdot \tilde{\lambda} \ar[r] & \mu''' = \LS{0 \, d_1 \ldots d_{n-3} \, 11} \cdot \tilde{\lambda}.}
\end{equation*}
Let $q=2+$the number of digits $0$ in $d_1, \ldots, d_{n-3}$. Let also $\nu' =\LS{d_1 \ldots d_{n-3} \, 01 0} \cdot \lambda$ and $\nu'' = \LS{d_1 \ldots d_{n-3} \, 10 0} \cdot \lambda$. Consider again the Čech bi-complex, described in Figure \ref{figure:chech_bi-complex_2}. By Proposition \ref{proposition:relative_BGG_second}, we have the following:
\begin{align*}
&H^q( \check{C}^{\bullet}_{\mu}, d_v ) = \tau_\ast^q \mcaO_\mfrq(\mu) = \mcaO_\mfrp(\nu), &  &\hskip -.25cm H^{q-1}( \check{C}^{\bullet}_{\mu'}, d_v ) = \tau_\ast^{q-1} \mcaO_\mfrq(\mu') = \mcaO_\mfrp(\nu'), \\
&H^{q-1}( \check{C}^{\bullet}_{\mu''}, d_v ) = \tau_\ast^{q-1} \mcaO_\mfrq(\mu'') = \mcaO_\mfrp(\nu''), & &\hskip -.25cm H^{q-2}( \check{C}^{\bullet}_{\mu'''}, d_v ) = \tau_\ast^{q-2} \mcaO_\mfrq(\mu''') = \mcaO_\mfrp(\nu'''),
\end{align*}
and all other vertical cohomologies are trivial. Note that we also have a standard operator $\mcaO_\mfrq(\mu') \to \mcaO_\mfrq(\mu'')$ that survives on the $(q-1)$-th cohomology,
\[ d \colon H^{q-1}( \check{C}^{\bullet}_{\mu'}, d_v ) \to H^{q-1}( \check{C}^{\bullet}_{\mu''}, d_v ), \qquad d([y]) = [ d_h(y) ]. \]

As before, take a cocycle $x \in \mathbf{\check{C}^{q}_\mu}$, and find $y \in \check{C}^{q-1}_{\mu'}$ such that $d_v(y) = d_h(x)$. Then, $d_h(y) \in \mathbf{\check{C}^{q-1}_{\mu''}}$ is also a cocycle. But since $H^{q-1}( \check{C}^{\bullet}_{\mu''}, d_v ) \neq 0$, we cannot conclude that $d_h(y) \in \Im d_v$ and proceed in the same way. To overcome this, we claim that the map
\begin{equation}
\label{equation:trivial_map}
H^q( \check{C}^{\bullet}_{\mu}, d_v ) \to \raisebox{.2em}{$H^{q-1}( \check{C}^{\bullet}_{\mu''}, d_v )$} \big/ \raisebox{-.2em}{$\Im d$}, \qquad  [x] \mapsto [d_h(y)] + \Im d
\end{equation}
is well-defined. Take $x' = x+ d_v(t)$ and choose $y'$ so that $d_h(x')=d_v(y')$. The equation (\ref{equation:well_def_on_cocycles}) shows that $[y-y'+d_h(t)] \in H^{q-1}( \check{C}^{\bullet}_{\mu'}, d_v )$. Moreover, observe that $d([y-y'+d_h(t)]) = [d_h(y)]-[d_h(y')] \in \Im d$, which proves our claim. Since obviously $\nu$ and $\nu''$ are of different parity, Lemma \ref{lemma:Enright-Shelton_splitting} implies that the map in (\ref{equation:trivial_map}) is trivial. Unwinding, this means that we can find a cocycle $y'' \in \mathbf{\check{C}^{q-1}_{\mu'}}$ so that $d_h(y)-d_h(y'') \in \Im d_v$. Consequently, we can replace $y$ by $y-y''$ and continue our diagram chase downwards, since now:
\begin{align*}
&d_v(y-y'') = d_h(x), \\
&d_h(y-y'') = d_v(z), \text{ for some } z \in \check{C}^{q-2}_{\mu''}.
\end{align*}
Finally, $d_h(z) \in \mathbf{\check{C}^{q-2}_{\mu'''}}$ is a cocycle. We want to define $D([x]) = [d_h(z)]$.
\begin{figure}[h]
\[ \xymatrix@=1em{\mathbf{\check{C}^{q}_\mu} \ar[r] & \check{C}^{q}_{\mu'} \ar[r] & \check{C}^{q}_{\mu''} \ar[r] & \check{C}^{q}_{\mu'''} \\
\check{C}^{q-1}_\mu \ar[r] \ar[u] & \mathbf{\check{C}^{q-1}_{\mu'}} \ar[r] \ar[u] & \mathbf{\check{C}^{q-1}_{\mu''}} \ar[u] \ar[r] & \check{C}^{q-1}_{\mu'''} \ar[u] \\
\check{C}^{q-2}_\mu \ar[r] \ar[u] & \check{C}^{q-2}_{\mu'} \ar[r] \ar[u] & \check{C}^{q-2}_{\mu''} \ar[u] \ar[r] & \mathbf{\check{C}^{q-2}_{\mu'''}} \ar[u] \\
\mcaO_\mfrq(\mu) \ar[r] \ar@{..}[u] & \mcaO_\mfrq(\mu') \ar[r] \ar@{..}[u] & \mcaO_\mfrq(\mu'') \ar[r] \ar@{..}[u] & \mcaO_\mfrq(\mu''') \ar@{..}[u]} \ \xymatrix@=1em{x \ar@{|->}[r] & d_h(x) \\
& y-y'' \ar@{|->}[u] \ar@{|->}[r] & d_h(y-y'') \\
& & z \ar@{|->}[u] \ar@{|->}[r] & d_h(z) . } \]
\caption{Diagram chasing over the Čech bi-complex (2)}
\label{figure:chech_bi-complex_2}
\end{figure}
It still needs to be checked that $D$ does not depend on the various choices we made. For this, we introduce another auxiliary map
\begin{equation}
\tilde{d} \colon \Ker d \to H^{q-2}( \check{C}^{\bullet}_{\mu'''}, d_v ),
\end{equation}
defined as follows. For $[y] \in \Ker d$, we can choose $z \in \check{C}^{q-2}_{\mu''}$ such that $d_v(z) = d_h(y)$. We put $\tilde{d}([y]) := [d_h(z)]$. It is easy to check that $\tilde{d}$ is well defined (in the same way as for $D$ in the proof of Theorem \ref{theorem:construction_1}). Since $\nu'$ and $\nu'''$ are of different parity, Lemma \ref{lemma:Enright-Shelton_splitting} implies that the map $\tilde{d}$ is trivial. Suppose we have $x, x' \in \mathbf{\check{C}^{q}_\mu}$ such that $x-x'=d_v(t)$ for some $t \in \check{C}^{q-1}_\mu$, and consider different choices for defining $D$:
\[ \xymatrix@=1em{x \ar@{|->}[r] & d_h(x) \\
& y \ar@{|->}[u] \ar@{|->}[r] & d_h(y) \in \Im d_v \\
& & z \ar@{|->}[u] \ar@{|->}[r] & d_h(z) , } \xymatrix@=1em{x' \ar@{|->}[r] & d_h(x') \\
& y' \ar@{|->}[u] \ar@{|->}[r] & d_h(y') \in \Im d_v \\
& & z' \ar@{|->}[u] \ar@{|->}[r] & d_h(z') . } \]
The equation (\ref{equation:well_def_on_cocycles}) again shows that $[y-y'+d_h(t)] \in H^{q-1}( \check{C}^{\bullet}_{\mu'}, d_v )$. Observe that $d([y-y'+d_h(t)]) = [d_h(y) - d_h(y')] = [d_v(z-z')] = 0$. So, $[y-y'+d_h(t)] \in \Ker d$, and therefore $0 = \tilde{d}( [y-y'+d_h(t)]) = [d_h(z-z')] = [d_h(z)]-[d_h(z')]$. The last conclusions are analogous to the ones in case of singularity of the first kind.
\end{proof}

\begin{definition}
In case of singularity of the second kind, the even (resp. the odd) part of the singular orbit, with all the non-standard operators constructed in Theorem \ref{theorem:construction_2} included in it, is called the \be{even} (resp. the \be{odd}) \be{singular BGG complex} of infinitesimal character $\lambda+\rho$.
\end{definition}

\begin{theorem} In case of singularity of the second kind:
\label{theorem:squares_anticommute_2}
\begin{enumerate}
\item The singular orbit of rank $n$ consists of two singular BGG complexes, each of which is directed-graph isomorphic to the regular one of rank $n-2$.
\item
Every square in the singular BGG complex anticommutes.
\item
If we add up all objects of the singular BGG complex of the same degree, we get a cochain complex.
\end{enumerate}
\end{theorem}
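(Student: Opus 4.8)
The plan is to imitate the proof of Theorem \ref{theorem:squares_anticommute_1}, with the three-row diagram chase of Theorem \ref{theorem:construction_2} playing the role that the two-row chase of Theorem \ref{theorem:construction_1} played there. Part (a) should be pure bookkeeping with the bijection (\ref{equation:wannabe_graph_isomorphism_2}): a standard operator of the singular orbit coming from a swap of the digits at positions $i,i+1$ with $i\le n-3$ becomes the corresponding swap arrow of the rank $n-2$ diagram; the standard operator coming from a swap at positions $n-2,n-1$ (which, by Proposition \ref{proposition:relative_BGG_second}, survives the higher direct image in the same degree) becomes a ``last digit'' arrow of the rank $n-2$ diagram; and the non-standard operators of Theorem \ref{theorem:construction_2} supply the remaining ``last digit'' arrows. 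Which of the two realizations occurs for a given ``last digit'' arrow is dictated by the parity of the number of digits $1$ among $d_1,\dots,d_{n-3}$, but on each parity part every arrow of the rank $n-2$ diagram is obtained exactly once, so (\ref{equation:wannabe_graph_isomorphism_2}) becomes a directed-graph isomorphism.

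For (b) I would classify the squares. If all four operators are standard, the relation already holds in the relative BGG resolution and passes down by functoriality of the higher direct images, so the square anticommutes. In any diamond the two composable moves are performed in both orders, so a ``last digit'' move -- the only move that can be realized by a non-standard operator -- occurs zero or two times; when it occurs twice, the complementary move is a swap at positions $j,j+1$ with $j\le n-4$, which preserves the number of digits $1$ in the first $n-3$ slots and hence forces both copies of the ``last digit'' move to be realized in the same way; also two ``last digit'' moves are never composable, so there is no square all of whose operators are non-standard. Thus the only remaining case is a square with two parallel standard and two parallel non-standard operators. For such a square I would form the three-dimensional Čech array obtained by augmenting the bi-complex of Figure \ref{figure:chech_bi-complex_2} with the ``orthogonal'' standard operator $d$ induced by the parallel relative-BGG arrow; as in the proof of Theorem \ref{theorem:squares_anticommute_1}, $d$ is $\mbbC$-linear and anticommutes with both $d_h$ and $d_v$. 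Running the chase of Theorem \ref{theorem:construction_2} for one of the two non-standard operators and applying $d$ at every stage produces admissible data for the chase of the parallel non-standard operator; in particular the Enright--Shelton adjustment survives, since $d(y'')$ is again a $d_v$-cocycle and $d_h\bigl(d(y-y'')\bigr)=-d\bigl(d_h(y-y'')\bigr)\in\Im d_v$. Comparing the two outputs $[d_h(z)]$ and $[d(d_h(z))]=-[d_h(d(z))]$ then yields the anticommutativity, the sign being precisely $d\,d_h=-d_h\,d$.

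Part (c) should follow from (b) exactly as in Theorem \ref{theorem:squares_anticommute_1}: writing $\delta=\sum\delta_i$ for the total differential, $\delta^2=\sum_{i<j}(\delta_i\delta_j+\delta_j\delta_i)$, and each summand vanishes. Either the two operators bound an anticommuting square, or they are composed along a chain interval of the rank $n-2$ diagram; any such chain of length two either consists of two standard operators, whose composition vanishes because the relative BGG resolution is a complex and the higher direct images are functorial, or it contains a non-standard operator (then the other move is necessarily a swap at positions $n-3,n-2$, which is standard), in which case the chase of Theorem \ref{theorem:construction_2} for that operator returns zero, because a composition of two consecutive relative-BGG differentials along a chain appears in it.

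I expect the main obstacle to be the sign bookkeeping through the longer chase, together with verifying that in a mixed square the parallel non-standard operator really is computed by ``the same'' chase -- i.e. that the combinatorics of the square makes all faces of the three-dimensional Čech array commute or anticommute as required -- and confirming that the Enright--Shelton adjustment step is genuinely compatible with $d$.
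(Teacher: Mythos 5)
Your treatment of part (b) is essentially the paper's own proof: the paper likewise reduces to the single mixed configuration (two parallel standard swaps against two parallel non-standard ``last digit'' operators), augments the Čech bi-complex of Figure \ref{figure:chech_bi-complex_2} by the orthogonal standard differential $d$, which anticommutes with $d_h$ and $d_v$, and checks that $y'=d(y)$ and $z'=d(z)$ are admissible choices for the parallel chase, whence $d_h(z')=-d(d_h(z))$ and the square anticommutes. Your explicit remark that the Enright--Shelton correction $y''$ is compatible with $d$ makes precise a point the paper leaves implicit, and your parity argument showing that the two ``last digit'' arrows of a given square are always realized the same way (both standard or both non-standard) is correct and fills in the case analysis the paper only sketches. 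Part (a) as you describe it is a fleshed-out version of what the paper dismisses as immediate from (\ref{equation:wannabe_graph_isomorphism_2}).

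The one step that would fail as written is your justification of the chain case in (c). You are right to raise it --- the paper's ``(c) follows from (b)'' silently ignores length-two intervals with a single middle vertex, which certainly occur (for $n=4$ each parity component is a single chain) --- but your claim that the chase ``returns zero because a composition of two consecutive relative-BGG differentials along a chain appears in it'' is not correct. When you pre- or post-compose the chase of Theorem \ref{theorem:construction_2} with a standard operator, the two consecutive relative-BGG arrows that get composed typically span a genuine \emph{square} in the relative Hasse diagram $W_\mfrr^\mfrq$, not a chain (for instance, for $n=4$ the composite $\LS{0\,010}\to\LS{0\,011}\to\LS{0\,101}$ sits in a square with second middle element $\LS{0\,100}$), so $d_h\circ d_h$ there equals minus the composition through the other middle element rather than zero. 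To finish one must chase further: show that the detoured element is a $d_v$-cocycle (this time using an honest chain one row to the left) and that the relevant $d_v$-cohomology of the detour column vanishes because that object does not survive the direct image in the needed degree. The same caveat applies, more mildly, to your all-standard chain case. The conclusion is correct and the repair is routine, but the vanishing is not the one-line consequence of $d_h^2=0$ that you assert.
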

\begin{proof}
Part (a) is trivial because of (\ref{equation:wannabe_graph_isomorphism_2}), and (c) follows from (b). To prove (b), the situations to consider in the relative BGG resolution are the following:
\[ \xymatrix@=1em{ \LS{0\ldots 01 \ldots 00} \ar[r] \ar[d] & \LS{0\ldots 01 \ldots 01} \ar[r] \ar[d] & \LS{0\ldots 01 \ldots 10}  \ar[r] \ar[d] & \LS{0\ldots 01 \ldots 11} \ar[d] \\
\LS{0\ldots 10 \ldots 00} \ar[r] & \LS{0\ldots 10 \ldots 01} \ar[r]& \LS{0\ldots 10 \ldots 10}  \ar[r] & \LS{0\ldots 10 \ldots 11} . } \]

Denote by $\mu$, $\mu'$, $\mu''$, $\mu'''$ the objects in the first row, by $\theta$, $\theta'$, $\theta''$, $\theta'''$ the objects in the second row, and by $d$ all the standard operators $\mu \to \theta$ and the primed versions. In the Čech bi-complex we have:
\begin{equation*}
\xymatrix@=.8em{
 & d(x) \in \mathbf{\check{C}^{q}_{\theta}}  \ar@{|->}[r] & d_h ( d(x)) \\
x \in \mathbf{\check{C}^{q}_{\mu}} \ar@{|->}[r] \ar@{|->}[ru] & d_h (x) & y' \ar[u]  \ar@{|->}[u] \ar@{|->}[r] & d_h (y') \in \mathbf{\check{C}^{q-1}_{\theta''}} \\
& y \ar@{|->}[u] \ar@{|->}[r] & d_h (y) \in \mathbf{\check{C}^{q-1}_{\mu''}} & z' \ar@{|->}[u] \ar@{|->}[r] & d_h(z') \in \mathbf{\check{C}^{q-2}_{\theta'''}} \\
& & z \ar@{|->}[u] \ar@{|->}[r] & d_h(z) \in \mathbf{\check{C}^{q-2}_{\mu'''}} \ar@{|-->}[ru]_{-d} . }
\end{equation*}
We need to show that $[d(d_h(z))] = - [d_h(z')]$. In the proof of Theorem \ref{theorem:squares_anticommute_1} we saw that we can take $y' = d(y)$, and then $d_h(y') = - d(d_h(y))$. In the same way $d_v(d(z)) = -d(d_v(z))= -d(d_h(y)) = d_h(y')$, so $z'=d(z)$ is a good candidate. Finally, $d_h(z') = d_h(d(z)) = - d(d_h(z))$.
\end{proof}

\section{Exactness of the singular BGG complex}

\begin{lemma}
\label{lemma:vanishing}
Let $Z$ be the twistor space of the restricted double fibration (\ref{equation:restricted_double_fibration}).
For any coherent sheaf $\mcaF$ on $Z$ we have:
\begin{equation}
\label{equation:vanishing}
H^k(Z,\mcaF) = 0, \quad \text{for all } k \geq n.
\end{equation}
\end{lemma}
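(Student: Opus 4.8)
The plan is to realize $Z$ as a union of $n$ Stein open subsets and to compute its cohomology via the \v{C}ech complex of that cover, which is forced to vanish in high degrees simply because the cover has only $n$ members.

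First I would record the geometry. By Proposition \ref{proposition:twistor_space_in_coordinates}, $Z$ is precisely the complement in $\mbbP^{2n-1}$ of the projective linear subspace $\{y_1 = \cdots = y_n = 0\} \cong \mbbP^{n-1}$. Hence the $n$ standard affine charts $U_i := \{[y_1:\cdots:y_n:z_1:\cdots:z_n] : y_i \neq 0\}$, $i = 1,\ldots,n$, all lie in $Z$ and cover it. Each $U_i$ is biholomorphic to $\mbbC^{2n-1}$, hence Stein, and more generally, for any $1 \leq i_0 < \cdots < i_p \leq n$, normalizing $y_{i_0}=1$ identifies $U_{i_0} \cap \cdots \cap U_{i_p}$ with $(\mbbC^\ast)^p \times \mbbC^{2n-1-p}$, a product of Stein manifolds and hence Stein. (Geometrically, $[y:z]\mapsto[y]$ presents $Z$ as the total space of a rank-$n$ holomorphic vector bundle over $\mbbP^{n-1}$, with the $U_i$ the preimages of the standard charts of the base — this is where the number $n$ comes from.)

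Now write $\mathfrak{U} = \{U_i\}_{i=1}^n$. Since $\mcaF$ is coherent, Cartan's Theorem B gives $H^q(U_{i_0}\cap\cdots\cap U_{i_p},\mcaF)=0$ for all $q>0$ and all multi-indices, so $\mathfrak{U}$ is a Leray cover for $\mcaF$. By Leray's theorem — equivalently, by the degeneration of the \v{C}ech-to-derived-functor spectral sequence $\check{H}^p(\mathfrak{U},\underline{H}^q(\mcaF)) \Rightarrow H^{p+q}(Z,\mcaF)$ — one has $H^k(Z,\mcaF) \cong \check{H}^k(\mathfrak{U},\mcaF)$ for every $k$. But the \v{C}ech complex of a cover by $n$ open sets has nonzero terms only in degrees $0,1,\ldots,n-1$, so $\check{H}^k(\mathfrak{U},\mcaF)=0$ for all $k\geq n$, which is exactly (\ref{equation:vanishing}).

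I do not expect a genuine obstacle: the only points that need (routine) care are the identification $Z = \mbbP^{2n-1}\setminus\mbbP^{n-1}$ extracted from Proposition \ref{proposition:twistor_space_in_coordinates} and the fact that every finite intersection of the charts $U_i$ is Stein, the latter being immediate since a product of copies of $\mbbC$ and $\mbbC^\ast$ is Stein. (In an algebraic reformulation one could instead invoke Grothendieck's vanishing theorem on the $(n-1)$-dimensional base together with $H^k(Z,\mcaF) = H^k(\mbbP^{n-1},\pi_\ast\mcaF)$ for the affine bundle projection $\pi$, but the \v{C}ech argument above requires no such input.)
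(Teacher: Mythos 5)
Your argument is exactly the one in the paper: cover $Z$ by the $n$ Stein charts $\{y_i\neq 0\}\cong\mbbC^{2n-1}$, invoke Cartan's Theorem B and Leray's theorem, and conclude from the length of the \v{C}ech complex. You merely spell out the (correct) details the paper leaves implicit, namely that all finite intersections are Stein.
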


\begin{proof}
From Proposition \ref{proposition:twistor_space_in_coordinates} it is obvious that $Z$ is a union of $n$ open subsets given by the equations $y_i \neq 0$, for $i=1,\ldots,n$. Each of those is isomorphic to $\mbbC^{2n-1}$, hence affine. Cartan's Theorem B and the Leray Theorem imply the vanishing (\ref{equation:vanishing}).
\end{proof}

\begin{theorem}
\label{theorem:exactness}
Each singular BGG complex is exact (in positive degrees) over the big affine cell $X$.
\end{theorem}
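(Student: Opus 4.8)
The plan is to deduce the exactness directly from the Baston--Eastwood spectral sequence \eqref{equation:penrose_spectral} of the restricted double fibration \eqref{equation:restricted_double_fibration} over the big affine cell $X$, whose abutment is the twistor cohomology $H^{\bullet}(Z,\mcaO_\mfrr(\tilde\lambda))$, together with the vanishing result of Lemma \ref{lemma:vanishing}. Here ``exact over $X$'' will be understood as exactness of the complex of sections over $X$, which is precisely what these tools control.

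First I would spell out the spectral sequence in our case. By Propositions \ref{proposition:relative_BGG_first} and \ref{proposition:relative_BGG_second}, the nonzero entries of the first page $E_1^{pq}=\Gamma(X,\tau^q_\ast\Delta^p_\eta(\tilde\lambda))$ are exactly the objects $\mcaO_\mfrp(\nu)$ of the singular orbit, and the $d_1$-differential is the collection of standard operators between them. Next I would observe that the diagram chases used to construct the non-standard operators in Theorems \ref{theorem:construction_1} and \ref{theorem:construction_2} are nothing but the textbook construction of the higher differentials of the spectral sequence of the \v{C}ech bicomplex: the operator of Theorem \ref{theorem:construction_1} is $d_2\colon E_2^{pq}\to E_2^{p+2,q-1}$, and the operator of Theorem \ref{theorem:construction_2} is $d_3\colon E_3^{pq}\to E_3^{p+3,q-2}$ (the step ``replace $y$ by $y-y''$'' there amounts precisely to the vanishing of the intervening $d_2$, forced by Lemma \ref{lemma:Enright-Shelton_splitting}, so that $d_2\equiv 0$ throughout the second kind). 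Since, by Theorems \ref{theorem:squares_anticommute_1}(a) and \ref{theorem:squares_anticommute_2}(a), the singular orbit together with these standard and non-standard operators is directed-graph isomorphic to the regular Hasse diagram of rank $n-2$, all its arrows join nodes of consecutive degree and carry spectral bidegree $(1,0)$ or $(2,-1)$ in the first kind, resp.\ $(1,0)$ or $(3,-2)$ in the second; hence the spectral sequence degenerates at $E_3$, resp.\ $E_4$. Passing to the limit and telescoping the surviving kernels and cokernels (legitimate because the squares anticommute, Theorems \ref{theorem:squares_anticommute_1}(b) and \ref{theorem:squares_anticommute_2}(b)) then identifies, for every node $(p,q)$, the group $E_\infty^{pq}$ with the cohomology of the singular BGG complex at that node.

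It then remains to place the complex inside the spectral sequence by total degree. A short computation with the length formula \eqref{equation:LS_length}, carried out on the bottom node -- which under the bijection \eqref{equation:wannabe_graph_isomorphism}, resp.\ \eqref{equation:wannabe_graph_isomorphism_2}, corresponds to the empty diagram of rank $n-2$ -- shows that it occupies total degree $p+q=n-1$; consequently the degree-$j$ term of the singular BGG complex sits in total degree $n-1+j$, so its terms in positive degree lie in total degrees $\ge n$. On the other hand the chain of canonical isomorphisms recalled before the Baston--Eastwood theorem (Leray collapse over the Stein set $X$, and $H^r(Y,\eta^{-1}\mcaO_\mfrr)\cong H^r(Z,\mcaO_\mfrr)$ since the fibers of $\eta$ are smoothly contractible), combined with Lemma \ref{lemma:vanishing}, gives $H^m(Z,\mcaO_\mfrr(\tilde\lambda))=0$ for all $m\ge n$. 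Since a vanishing abutment forces $E_\infty^{pq}=0$ for all $(p,q)$ with $p+q$ in that range, the cohomology of each singular BGG complex vanishes in every positive degree, which is the assertion.

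The step I expect to be the main obstacle is the middle one: making the identification of the higher spectral-sequence differentials with the operators of Theorems \ref{theorem:construction_1} and \ref{theorem:construction_2} completely precise, and -- what is really the same issue -- verifying that no further differentials occur, so that $E_\infty^{pq}$ is genuinely the cohomology of the singular BGG complex at the node $(p,q)$ rather than merely an additional subquotient of it. Once this is in place, the degree count and the appeal to Lemma \ref{lemma:vanishing} are routine. (If one wanted exactness as a complex of sheaves rather than of sections over $X$, one would rerun the argument over a basis of polydiscs $X'\subseteq X$, which would require the analogue of Lemma \ref{lemma:vanishing} for the corresponding smaller twistor spaces $Z'=\eta(\tau^{-1}(X'))$.)
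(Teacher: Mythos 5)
Your proposal is correct and follows essentially the same route as the paper's proof: identify the abutment $E_\infty$ of the Baston--Eastwood spectral sequence \eqref{equation:penrose_spectral} with the cohomology of the singular BGG complex (the non-standard operators being precisely the higher differentials $d_2$, resp.\ $d_3$), locate the complex at total degree $n-1+j$ via \eqref{equation:LS_length}, and conclude by the vanishing of $H^k(Z,\mcaO_\mfrr(\tilde\lambda))$ for $k\ge n$ from Lemma \ref{lemma:vanishing}. Your version merely makes explicit the degeneration of the spectral sequence and the identification of the differentials, points the paper treats as immediate from the construction in Theorems \ref{theorem:construction_1} and \ref{theorem:construction_2}.
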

\begin{proof}
Observe that the spectral sequence (\ref{equation:penrose_spectral}) has on the abutment $E_\infty$ the sections over $X$ of the cohomologies of our singular BGG complex. This follows from the construction: non-standard operators were defined exactly as the induced differentials in the hypercohomology spectral sequence, so they appear on the last page of the spectral sequence before it stabilizes. Moreover, by Cartan's Theorem B, the functor $\Gamma(X,-)$ is exact, so it commutes with taking cohomology of a cochain complex of sheaves.

From Proposition \ref{proposition:relative_BGG_first}, Proposition \ref{proposition:relative_BGG_second} and (\ref{equation:LS_length}), it follows that the non-trivial elements on the first page of the spectral sequence with the smallest $p+q$ are respectively $\LS{0 \ldots 0 \mathbf{1} 0 \ldots 0}$ for the first kind, and $\LS{0 \ldots 0}$ and $\LS{0 \ldots 01}$ for the second kind, and each of them has $p+q = n-1$. So $H^i(Z, \mcaO_\mfrr(\tilde{\lambda}))$ measure the non-exactness of the singular BGG complex, up to the shift in degree by $n-1$. Because of Lemma \ref{lemma:vanishing}, singular BGG complexes are exact from the degree $n-(n-1)=1$ above.
\end{proof}

\begin{example}
The even singular BGG complex for $\lambda + \rho = [2,1,0]$
consists of one non-standard operator
$D \colon \mcaO_\mfrp(\begin{dynkin}
\dynkinline{0}{0}{1}{0}
\dynkindoubleline{1}{0}{2}{0}
\dynkindot{0}{0}
\dynkindot{1}{0}
\dynkincross{2}{0}
\dynkinlabel{0}{0}{above}{$0$}
\dynkinlabel{1}{0}{above}{$0$}
\dynkinlabel{2}{0}{above}{$-1$}
\end{dynkin}) \to \mcaO_\mfrp(\begin{dynkin}
\dynkinline{0}{0}{1}{0}
\dynkindoubleline{1}{0}{2}{0}
\dynkindot{0}{0}
\dynkindot{1}{0}
\dynkincross{2}{0}
\dynkinlabel{0}{0}{above}{$0$}
\dynkinlabel{1}{0}{above}{$0$}
\dynkinlabel{2}{0}{above}{$-3$}
\end{dynkin})$, surjective over the big cell. By finding the maximal vector of the corresponding homomorphism, we can find the formula for $D$ in the local coordinates on the big cell given by $\mfru^-$:
\[ D = 4 \partial_{b_1} \partial_{b_2} \partial_{b_3} - \partial_{b_1} \partial_{c_{23}}^2 - \partial_{b_2} \partial_{c_{13}}^2 - \partial_{b_3} \partial_{c_{12}}^2 +  \partial_{c_{12}} \partial_{c_{13}} \partial_{c_{23}}.\]
\end{example}

\subsection{Conjectures}
The following conjecture would imply local exactness of the singular BGG complex, that is, exactness in the category of sheaves:
\begin{conjecture}
\label{conjecture:local_exactness}
The conclusion of Lemma \ref{lemma:vanishing} is true for the twistor space $Z'$, where $X'$ is a suitably chosen, but arbitrarily small open subset.
\end{conjecture}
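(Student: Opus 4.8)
The plan is to reproduce the argument of Lemma \ref{lemma:vanishing} for the shrunken twistor space: to exhibit, for a suitably chosen small $X'$, a cover of $Z'$ by $n$ Stein open sets all of whose finite intersections are Stein, so that the Leray theorem together with Cartan's Theorem B forces $H^k(Z', \mcaF) = 0$ for $k \geq n$, the nerve of an $n$-element cover having dimension at most $n-1$.

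First I would fix the shape of $X'$. Take $X'$ to be a small ball (or polydisc) around an arbitrary point of the big affine cell, which in the affine coordinates $\Sym_n(\mbbC) \cong \mfru^-$ we may assume is centred at $0$. Arguing as in Proposition \ref{proposition:twistor_space_in_coordinates}, $Z' = \eta(\tau^{-1}(X'))$ is then the set of $[y_1 : \cdots : y_n : z_1 : \cdots : z_n] \in \mbbP^{2n-1}$ with $y \neq 0$ and $z = Cy$ for some $C \in X'$ --- a tube around the linear subspace $\mbbP^{n-1} = \{z = 0\}$. The natural candidate cover is $W_i = Z' \cap \{y_i \neq 0\}$, $i = 1, \ldots, n$, which visibly covers $Z'$ just as in the proof of Lemma \ref{lemma:vanishing}.

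Second I would try to prove that every intersection $W_{i_0} \cap \cdots \cap W_{i_p}$ is Stein. In the chart $y_{i_0} = 1$ this intersection is $\{(y, Cy) : y_{i_0} = 1, \ y_{i_j} \neq 0 \ (1 \leq j \leq p), \ C \in X'\}$; its projection to the $y$-variables is the Stein set $(\mbbC^\ast)^p \times \mbbC^{n-1-p}$, and for each fixed $y \neq 0$ the map $C \mapsto Cy$ is a linear surjection $\Sym_n(\mbbC) \twoheadrightarrow \mbbC^n$, so the fibre is the linear image of the convex set $X'$, hence a convex (Stein) open subset of $\mbbC^n$. What remains is to show that this family of convex fibres over a Stein base has Stein total space; I would attempt a strictly plurisubharmonic exhaustion obtained by adding $-\log$ of the coordinate functions $y_{i_j}$ cutting out the base to a function of the shape $(y, z) \mapsto \inf\{\lVert C \rVert^2 : C \in \Sym_n(\mbbC), \ Cy = z\}$ measuring the distance of $z$ to $0$ in a $y$-dependent metric, after shrinking $X'$ if necessary.

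The main obstacle is precisely this plurisubharmonicity. The $y$-dependent squared norm $(y, z) \mapsto \lVert L_y^{+} z \rVert^2$, where $L_y^{+}$ is the minimal-norm solution operator of $Cy = z$, depends non-holomorphically on $y$, so it is not at all clear that it --- or any comparable gauge for $X'$ --- is plurisubharmonic jointly in $(y, z)$. Two routes seem worth pursuing: either replace the round ball $X'$ by a more carefully shaped, still arbitrarily small open domain whose images under all the $L_y$ are uniformly controlled; or, more conceptually, identify $Z'$ for a suitable $X'$ with an open neighbourhood of the zero section of the normal bundle $\mcaO(1)^{\oplus n}$ of $\mbbP^{n-1} \subseteq \mbbP^{2n-1}$, whose total space has coherent cohomological dimension $n-1$ (it maps affinely onto $\mbbP^{n-1}$), and transport the vanishing along the tubular-neighbourhood identification. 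Either way the difficulty is the same one that keeps the statement a conjecture: controlling the cohomological dimension of a genuinely non-Stein open piece of a projective-type space.
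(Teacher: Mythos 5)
The statement you are proving is stated in the paper as a \emph{conjecture}, and the paper gives no proof of it; so there is no argument of the author's to compare yours against. Your proposal is a sensible plan of attack, but it is not a proof, and you say so yourself: the whole content of the conjecture is concentrated in the step you leave open. Reducing the claim to ``the $n$ sets $W_i = Z' \cap \{y_i \neq 0\}$ and all their finite intersections are Stein'' is the obvious way to imitate Lemma \ref{lemma:vanishing}, but for the shrunken $X'$ these sets are no longer copies of affine space: they are tube-like domains whose fibres over the $y$-coordinates vary non-holomorphically, and exhibiting a strictly plurisubharmonic exhaustion for such a domain is precisely the unsolved problem. The candidate function $(y,z) \mapsto \lVert L_y^{+} z\rVert^2$ you write down is not known to be plurisubharmonic, and convexity of each fibre of $\tau$ does not by itself make the total space Stein (holomorphic convexity is a joint condition in $(y,z)$, not a fibrewise one).

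Your alternative route has the same gap in a different guise. The full twistor space $Z = \mbbP^{2n-1} \setminus \{y=0\}$ is indeed (biholomorphic to) the total space of the normal bundle of $\mbbP^{n-1} \subseteq \mbbP^{2n-1}$, and the bundle projection being a Stein morphism is essentially why Lemma \ref{lemma:vanishing} holds for $Z$. But $Z'$ is only an \emph{open neighbourhood} of the zero section inside that total space, and bounds on coherent cohomological dimension do not pass to arbitrary open subsets: an open subset of a space of cohomological dimension $n-1$ can have strictly larger cohomological dimension. So ``transporting the vanishing along the tubular-neighbourhood identification'' is not a valid step without additional input (e.g.\ a $q$-completeness or $q$-convexity statement for the specific neighbourhoods $Z'$, uniform in the size of $X'$). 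In short: you have correctly located the obstruction, but you have not removed it, which is consistent with the statement remaining a conjecture in the paper.
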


If $\lambda$ is of a higher singularity, say $|\Sigma|=s>1$, a reasonable thing to try would be the Penrose transform over:
\begin{equation}
\label{equation:double_fibration_s_n}
\xymatrix@C=-4em@R=.4em{ & \begin{dynkin}
\dynkindots{2}{0}{3}{0}
\dynkinline{3}{0}{4}{0}
\dynkinline{4}{0}{5}{0}
\dynkindots{5}{0}{6}{0}
\dynkindoubleline{6}{0}{7}{0}
\dynkindot{2}{0}
\dynkindot{3}{0}
\dynkincross{4}{0}
\dynkindot{5}{0}
\dynkindot{6}{0}
\dynkincross{7}{0}
\dynkinlabel{4}{0}{below}{$\alpha_s$}
\dynkinlabel{7}{0}{below}{$\alpha_n$}
\end{dynkin} \ar[dl] \ar[dr] & \\
\begin{dynkin}
\dynkindots{2}{0}{3}{0}
\dynkinline{3}{0}{4}{0}
\dynkinline{4}{0}{5}{0}
\dynkindots{5}{0}{6}{0}
\dynkindoubleline{6}{0}{7}{0}
\dynkindot{2}{0}
\dynkindot{3}{0}
\dynkincross{4}{0}
\dynkindot{5}{0}
\dynkindot{6}{0}
\dynkindot{7}{0}
\dynkinlabel{4}{0}{below}{$\alpha_s$}
\dynkinlabel{7}{0}{below}{$\alpha_n$}
\end{dynkin} & & \begin{dynkin}
\dynkinline{1}{0}{2}{0}
\dynkindots{2}{0}{3}{0}
\dynkinline{3}{0}{4}{0}
\dynkindoubleline{4}{0}{5}{0}
\dynkindot{1}{0}
\dynkindot{2}{0}
\dynkindot{3}{0}
\dynkindot{4}{0}
\dynkincross{5}{0}
\dynkinlabel{5}{0}{below}{$\alpha_n$}
\end{dynkin} , }
\end{equation}
In this setting, the appropriate vanishing result would be the following:
\begin{conjecture}
\label{conjecture:higher_singularity}
Let $X \subseteq \iGr(n,2n)$ be the big affine cell, or a ball or a polydisc inside it, and let $Z$ be the corresponding twistor space in the double fibration (\ref{equation:double_fibration_s_n}). For any coherent sheaf $\mcaF$ on $Z$ we have:
\[ H^k(Z,\mcaF) = 0, \quad \text{for all } k > s(n-s) - \frac{s(s-1)}{2}. \]
\end{conjecture}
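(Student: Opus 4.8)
The plan is to imitate the proof of Lemma~\ref{lemma:vanishing}, reducing the vanishing to Grothendieck vanishing on an ordinary Grassmannian, but organising the twistor space $Z$ through a fibration rather than through an explicit affine cover. First I would make $Z$ explicit, generalising Proposition~\ref{proposition:twistor_space_in_coordinates}: writing a point of $G/R=\iGr(s,2n)$ as the column span of a block matrix $\binom{A}{B}$ with $A,B\in M_{n\times s}(\mbbC)$ and $A^{T}B=B^{T}A$ (the isotropy condition), one checks — just as in the $s=1$ case, using that for $A$ of full column rank the equation $CA=B$ has a symmetric solution $C$ as soon as $A^{T}B$ is symmetric — that $Z$ is the open locus $\operatorname{rank}A=s$, i.e. the set of isotropic $s$-planes meeting the fixed ``fiber Lagrangian'' $\{0\}\times\mbbC^{n}$ trivially. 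The assignment $\binom{A}{B}\mapsto\operatorname{col}A$ (the column span of $A$) then realises $Z$ as the total space of a vector bundle $\pi\colon Z\to\Gr(s,n)$, whose fiber over $V$ is the space of symmetric bilinear forms on $\mbbC^{n}$ modulo those whose radical contains $V$; in terms of the tautological sub/quotient bundles $\mathcal S,\mathcal Q$ on $\Gr(s,n)$, this bundle $\mcaE$ sits in a sequence $0\to\mathcal S^{*}\otimes\mathcal Q^{*}\to\mcaE\to\Sym^{2}\mathcal S^{*}\to 0$ and has rank $sn-\tfrac{s(s-1)}{2}$. (For $s=1$ this recovers $Z$ as the total space of $\mcaO_{\mbbP^{n-1}}(1)^{\oplus n}$, the twistor space of Section~3 written as a bundle over $\mbbP^{n-1}$.)

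Next, since a vector bundle projection is an affine morphism, $R^{q}\pi_{*}=0$ for $q>0$, so $H^{k}(Z,\mcaF)\cong H^{k}(\Gr(s,n),\pi_{*}\mcaF)$ for every coherent $\mcaF$. As $\dim\Gr(s,n)=s(n-s)$, Grothendieck vanishing immediately yields $H^{k}(Z,\mcaF)=0$ for $k>s(n-s)$; and for $s=1$ this already reproduces Lemma~\ref{lemma:vanishing}. The entire content of the conjecture beyond this is therefore the vanishing of the \emph{top} $\tfrac{s(s-1)}{2}$ cohomology groups of $\pi_{*}\mcaF$ on $\Gr(s,n)$, i.e. vanishing in the range $s(n-s)-\tfrac{s(s-1)}{2}<k\le s(n-s)$.

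That last point is where I expect the real difficulty, and I suspect the statement has to be weakened. The natural attack is Bott's theorem: $\pi_{*}\mcaF$ is a sheaf of $\Sym^{\bullet}\mcaE^{*}$-modules, $\mcaE^{*}$ fits in $0\to\Sym^{2}\mathcal S\to\mcaE^{*}\to\mathcal S\otimes\mathcal Q\to 0$, so through this filtration and classical plethysm $\Sym^{m}\mcaE^{*}$ decomposes into homogeneous bundles $S_{\nu}\mathcal S\otimes S_{\mu}\mathcal Q$, each of whose cohomology Bott computes; after a standard reduction to line bundles one would hope that the Bott degree of each piece stays out of the interval above. However, this hoped-for extra vanishing seems to fail already for $\mcaF=\mcaO_{Z}$: the surjection $\mcaE^{*}\twoheadrightarrow\mathcal S\otimes\mathcal Q$ induces $\Sym^{m}\mcaE^{*}\twoheadrightarrow\bigoplus_{\lambda\vdash m}S_{\lambda}\mathcal S\otimes S_{\lambda}\mathcal Q$, and for $m=sn$ and $\lambda=(n^{s})$ (admissible once $s\le n-s$) the corresponding summand has $H^{s(n-s)}\ne 0$ by Bott. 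Surjectivity on top cohomology then forces $H^{s(n-s)}(Z,\mcaO_{Z})\ne 0$, and a twisted version of the same argument forces $H^{s(n-s)}(Z,\mcaO_{\mfrr}(\tilde{\lambda}))\ne 0$ as well (a line bundle being trivial on the affine fibers of $\pi$). Hence, for $s\ge 2$, the bound $s(n-s)-\tfrac{s(s-1)}{2}$ cannot hold for arbitrary coherent sheaves.

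What the first two steps do prove is $H^{k}(Z,\mcaF)=0$ for $k>s(n-s)$, and I would conjecture that this — rather than $s(n-s)-\tfrac{s(s-1)}{2}$ — is the correct statement. It is still enough to deduce exactness of the higher-singular BGG complex in positive degrees \emph{provided} the lowest non-trivial entry of the spectral sequence~\eqref{equation:penrose_spectral} again lies in total degree $s(n-s)$; this is the case for $s=1$ (it sits in degree $n-1=s(n-s)$), and since the fiber of $\tau\colon G/Q\to G/P$ in~\eqref{equation:double_fibration_s_n} is now $\Gr(s,n)$, of dimension $s(n-s)$, degree $s(n-s)$ is the natural guess. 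Determining that lowest degree precisely is then the remaining hard point: if it turns out to be strictly smaller than $s(n-s)$, the twistor space of~\eqref{equation:double_fibration_s_n} has too large a cohomological dimension and one must replace~\eqref{equation:double_fibration_s_n} by a double fibration whose twistor space is smaller.
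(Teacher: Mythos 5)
This statement is a \emph{conjecture} in the paper --- no proof is given there, so there is nothing to compare your argument against, and your submission is in any case not a proof of the stated bound. What you do establish is correct and worthwhile: identifying $Z$ with $\{V\in\iGr(s,2n)\colon V\cap(0\oplus\mbbC^n)=0\}$ and realizing it as the total space of a vector bundle $\pi\colon Z\to\Gr(s,n)$ of rank $sn-\tfrac{s(s-1)}{2}$ is a clean generalization of Proposition \ref{proposition:twistor_space_in_coordinates}, and affineness of $\pi$ plus Grothendieck vanishing on the base does give $H^k(Z,\mcaF)=0$ for $k>s(n-s)$, recovering Lemma \ref{lemma:vanishing} when $s=1$. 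But this is the exponent $s(n-s)$, not $s(n-s)-\tfrac{s(s-1)}{2}$, and for the rest you argue the conjecture is unattainable rather than proving it.

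Your disproof, however, rests on a faulty computation, even though your conclusion is right. The summand $S_{(n^s)}\mathcal S\otimes S_{(n^s)}\mathcal Q$ of $\Sym^{sn}(\mathcal S\otimes\mathcal Q)$ need not have nonzero top cohomology: already on $\Gr(2,4)$ it equals $(\det\mathcal S)^{4}\otimes(\det\mathcal Q)^{4}\cong\mcaO$, whose $H^{4}$ vanishes (and a Bott check kills the other Cauchy summands there too); surjectivity of $\Sym^{m}\mcaE^{*}\twoheadrightarrow S_\lambda\mathcal S\otimes S_\lambda\mathcal Q$ on top cohomology proves nothing when the target's top cohomology is zero. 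The correct and much simpler witness is this: $Z$ contains the zero section $i\colon\Gr(s,n)\hookrightarrow Z$ (the isotropic planes $V\oplus 0$, each contained in the Lagrangian $\mbbC^n\oplus 0\in X$), a complete subvariety of dimension $s(n-s)$. Hence $\mcaF=i_{*}\omega_{\Gr(s,n)}$ is coherent on $Z$ with $H^{s(n-s)}(Z,\mcaF)\cong H^{s(n-s)}(\Gr(s,n),\omega)\cong\mbbC\neq 0$, and since $s(n-s)>s(n-s)-\tfrac{s(s-1)}{2}$ for $s\geq 2$, the conjecture is false as stated for arbitrary coherent sheaves. Combined with your upper bound, $\operatorname{cd}(Z)=s(n-s)$ exactly. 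So the conjecture can only be salvaged by restricting $\mcaF$ to the homogeneous sheaves $\mcaO_\mfrr(\tilde{\lambda})$ actually needed for the Penrose transform (where your Bott-theoretic analysis of $\pi_{*}$ would become the relevant, and genuinely open, computation), or by replacing the double fibration (\ref{equation:double_fibration_s_n}) --- which is essentially your own closing remark, and is the right diagnosis.
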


\bibliographystyle{alpha}
\bibliography{My_BibTex_Library_shorter}

\end{document}